
\documentclass[10pt,a4paper]{amsart}

\usepackage{graphicx,amssymb,color,amscd}

\theoremstyle{plain}
\newtheorem{theorem}{Theorem}[section]
\newtheorem{lemma}[theorem]{Lemma}
\newtheorem{proposition}[theorem]{Proposition}
\newtheorem{corollary}[theorem]{Corollary}
\theoremstyle{definition}

\theoremstyle{remark}
\newtheorem{convention}[theorem]{Convention} 
\newtheorem{remark}[theorem]{Remark}
\newtheorem{fact}[theorem]{Fact}
\newtheorem{claim}[theorem]{Claim}
\newtheorem*{acknowledgments}{Acknowledgments}
\numberwithin{equation}{section}
\numberwithin{figure}{section}

\arraycolsep 1pt

% Definition of new commands 
% 
\newcommand{\bd}{\begin{description}}   
\newcommand{\ed}{\end{description}} 
\newcommand{\ba}{\begin{array}}      \newcommand{\ea}{\end{array}} 
\newcommand{\bc}{\begin{center}}     \newcommand{\ec}{\end{center}} 
\newcommand{\be}{\begin{enumerate}}  \newcommand{\ee}{\end{enumerate}} 
\newcommand{\beq}{\begin{eqnarray}}  \newcommand{\eeq}{\end{eqnarray}} 
\newcommand{\beQ}{\begin{eqnarray*}} \newcommand{\eeQ}{\end{eqnarray*}} 
\newcommand{\bi}{\begin{itemize}}    \newcommand{\ei}{\end{itemize}}

% 
% FIGTOTEXT
%
\newcommand{\nbpt}{18}
\newcommand{\figtotext}[3]{\begin{array}{c}\includegraphics{#3}\end{array}}
\newcommand{\double}{\figtotext{\nbpt}{\nbpt}{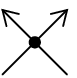}}
\newcommand{\Over}{\figtotext{\nbpt}{\nbpt}{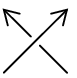}}
\newcommand{\under}{\figtotext{\nbpt}{\nbpt}{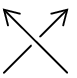}}

\newcommand{\si}{\sigma}
\newcommand{\1}{\mathbf{1}}

\begin{document} 
\title[Abelian quotients of the string link monoid]{Abelian quotients of the string link monoid} 
\author[J.B. Meilhan]{Jean-Baptiste Meilhan} 
\address{Institut Fourier, Universit\'e Grenoble 1 \\
         100 rue des Maths - BP 74\\
         38402 St Martin d'H\`eres , France}
	 \email{jean-baptiste.meilhan@ujf-grenoble.fr}
\author[A. Yasuhara]{Akira Yasuhara} 
\address{Tokyo Gakugei University\\
         Department of Mathematics\\
         Koganeishi \\
         Tokyo 184-8501, Japan}
	 \email{yasuhara@u-gakugei.ac.jp}
%
%\subjclass[2000]{57M25, 57M27}
%
%\keywords{$C_n$-moves, Milnor invariants, string links, concordance, claspers}
%
\begin{abstract} 
The set $\mathcal{SL}(n)$ of $n$-string links has a monoid structure, given by the stacking product.
When considered up to concordance, $\mathcal{SL}(n)$ becomes a group, which is known to be abelian 
only if $n=1$. 
In this paper, we consider two families of equivalence relations which endow $\mathcal{SL}(n)$ with a group structure, 
namely the $C_k$-equivalence introduced by Habiro in connection with finite type invariants theory, 
and the $C_k$-concordance, which is generated by $C_k$-equivalence and concordance. 
We investigate under which condition these groups are abelian, and give applications to finite type invariants. 
\end{abstract} 

\maketitle
\section{Introduction}
For a positive integer $n$, let $D^2$ be the standard two-dimensional disk equipped with $n$ marked points $x_1,...,x_n$ in its interior.  
Let $I$ denote the unit interval.  
An \textit{$n$-string link} is a proper embedding  
\[ L : \bigsqcup_{i=1}^n I_i \rightarrow D^2\times I, \]
of the disjoint union $\sqcup_{i=1}^{n} I_i$ of $n$ copies of $I$ in $D^2\times I$, 
such that for each $i$, the image $L_i$ of $I_i$ runs from $(x_i,0)$ to $(x_i,1)$.   
Abusing notation, we will also denote by $L \subset D^2\times I$ the image of the map $L$, 
and $L_i$ is called the $i$th string of $L$.  
Note that each string of an $n$-string link is equipped with an (upward) orientation 
induced by the natural orientation of $I$.
The string link $\sqcup_{i=1}^n(\{x_i\}\times I)$ is called the {\em trivial $n$-string link} and is
denoted by $\1_n$, or simply by $\1$.

The set $\mathcal{SL}(n)$ of isotopy classes of $n$-string links fixing the endpoints has a monoid structure, 
with composition given by the \emph{stacking product} and with the trivial $n$-string link $\1_n$ as unit element.   
There is a surjective map from $\mathcal{SL}(n)$ to the set of isotopy classes of $n$-component links,
which sends an $n$-string link to its closure (in the usual sense).  
For $n=1$, this map is a monoid isomorphism.  

It is well known that the monoid $\mathcal{SL}(n)$ is not a group. In fact, it is quite far from being a group : 
the group of invertible elements in $\mathcal{SL}(n)$ is actually the pure braid group on $n$ strands \cite{HL}. 
However, $\mathcal{SL}(n)$ becomes a group when considered up to concordance. 
Recall that two $n$-string links $L, L'$ are \emph{concordant} if there is an embedding 
$$ f: \left(\sqcup_{i=1}^n I_i\right)\times I \longrightarrow \left(D^2\times I\right)\times I $$  
such that 
$f\left( (\sqcup_{i=1}^n I_i)\times \{ 0 \} \right)=L$ and $f\left( (\sqcup_{i=1}^n I_i)\times \{ 1 \} \right)=L'$, 
and such that $f\left(\partial(\sqcup_{i=1}^n I_i)\times I\right)=(\partial L) \times I$.   
The inverse of a string link up to concordance is simply its horizontal mirror image with reversed orientation. 
Le Dimet showed that the group of concordance classes of $n$-string links is not abelian for $n\ge 3$ in \cite{ledimet}.\footnote{
The literature sometimes erroneously refers to \cite{ledimet} for the analogous fact for $n=2$, 
but, as Le Dimet writes in \cite[4.5 Conclusions 2.]{ledimet}, his arguments did not allow him to conclude in the case of $2$-string links. }
The fact that this result also hold for $n=2$ seem to have been first observed by De Campos \cite{decampos}, 
as a consequence of a result of Miyazaki for the theta-curve cobordism group \cite{miyazaki}. 

Now, other equivalence relations are known to endow $\mathcal{SL}(n)$ with a group structure:  
for each $k\ge 1$, the set $\mathcal{SL}(n)/C_k$ of \emph{$C_k$-equivalence} classes of $n$-string links is a group \cite{H}. 
Here, the $C_k$-equivalence is an equivalence relation on links generated by \emph{$C_k$-moves} and ambient isotopies, defined in connection 
with the theory of finite type invariants. 
A $C_1$-move is just a crossing change, and for any integer $k\ge 2$, a $C_k$-move is a local move on links as illustrated in Figure \ref{cnm}. 
\begin{figure}[!h]
\includegraphics{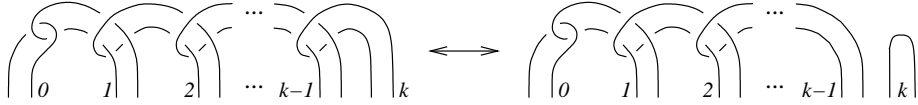}
\caption{A $C_k$-move involves $k+1$ strands of a link, labelled here by integers between $0$ and $k$.   } \label{cnm}
\end{figure}
These local moves can also be defined in terms of `insertion' of an element of the $k$th lower central  
subgroup of some pure braid group \cite{stanford}, or alternatively by using the theory of claspers (see Section \ref{claspers}). 

In this paper, we investigate under which condition the quotient group $\mathcal{SL}(n)/C_k$ is abelian. 
This is immediate, for $k=1$ since $\mathcal{SL}(n)/C_1$ is the trivial group. 
It is also well-known that, for all $n\ge 1$, the group $\mathcal{SL}(n)/C_2$ is abelian. 
This essentially follows from the fact, due to Murakami and Nakanishi \cite{MN}, 
that $C_2$-equivalence classes of (string) links are classified by the linking number. 
Another rather easy fact is the following, which uses Milnor's triple linking number (see Section \ref{sec:proof0}).
\begin{proposition}\label{nonabeliansl}
The group $\mathcal{SL}(n)/C_k$ of $C_k$-equivalence classes of $n$-string links is not abelian for any $n\ge 3$ and any $k\ge 3$. 
\end{proposition}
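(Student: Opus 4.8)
The plan is to detect non-commutativity using Milnor's triple linking number $\mu_{123}$, which is a well-defined integer-valued invariant of string links (for string links the canonical longitudes remove all of Milnor's usual indeterminacy). First I would record its behaviour under the relevant equivalence relation: it is known that a Milnor invariant of length $m$ is preserved by $C_m$-moves, so $\mu_{123}$ is a $C_3$-equivalence invariant; since $C_k$-equivalence implies $C_3$-equivalence for every $k\ge 3$ (a $C_k$-move is realized by lower-degree moves), $\mu_{123}$ is invariant under $C_k$-equivalence for all $k\ge 3$. Thus $\mu_{123}$ descends to a map $\mathcal{SL}(n)/C_k \to \modZ$ for every such $k$.

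The heart of the argument is that $\mu_{123}$ is \emph{not} a homomorphism for the stacking product: its failure to be additive is governed by a bilinear term in the pairwise linking numbers. Writing $l_{ij}$ for the linking number of strands $i$ and $j$, there is a composition formula of the shape $\mu_{123}(ab)=\mu_{123}(a)+\mu_{123}(b)+Q(a,b)$, where $Q$ is bilinear in the $l_{ij}(a)$ and $l_{ij}(b)$. The correction $Q$ arises from the non-commutative multiplication of the preferred longitudes under stacking, and is therefore \emph{not} symmetric in $a$ and $b$; this asymmetry is exactly the obstruction to commutativity.

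To make this concrete I would fix $n=3$ and take explicit generators: let $a$ be the $3$-string link given by a single positive clasp between strands $1$ and $2$ (so $l_{12}(a)=1$ and all other linking numbers as well as $\mu_{123}(a)$ vanish), and let $b$ be the analogous clasp between strands $2$ and $3$. Feeding these into the composition formula, the only surviving contribution to $Q(a,b)$ is a term $\pm\, l_{12}(a)\,l_{23}(b)=\pm 1$, while $Q(b,a)$ involves $l_{12}(b)\,l_{23}(a)=0$; hence $\mu_{123}(ab)-\mu_{123}(ba)=\pm 1\neq 0$. Since $\mu_{123}$ is a $C_k$-invariant for $k\ge 3$, the classes $[ab]$ and $[ba]$ are distinct in $\mathcal{SL}(3)/C_k$, whereas they would coincide in an abelian group; therefore $\mathcal{SL}(3)/C_k$ is non-abelian. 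To pass to arbitrary $n\ge 3$, I would adjoin $n-3$ trivial parallel strands disjoint from the picture: this leaves $\mu_{123}$ of the two string links unchanged, so the same pair witnesses non-commutativity in $\mathcal{SL}(n)/C_k$.

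The main obstacle is the composition formula itself, that is, proving that $\mu_{123}$ fails to be additive under stacking and pinning down the correction term precisely enough to see that it is nonzero for the chosen clasps. I would establish it via the Magnus expansion of the string-link group: track how the preferred longitudes of the three strands multiply when $a$ is stacked on $b$, read off the degree-$2$ and degree-$3$ coefficients, and isolate the bilinear defect. Alternatively, for the specific clasp links above one can bypass the general formula and verify the single inequality $\mu_{123}(ab)\neq\mu_{123}(ba)$ directly by a clasper computation, which is all that is strictly needed for the conclusion.
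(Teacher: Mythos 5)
Your proposal is correct, and it rests on the same invariant and, in effect, the same test links as the paper: your two clasps $a$ and $b$ are exactly the pure braids $\sigma_1^2$ and $\sigma_2^2$, and the detecting invariant is Milnor's $\mu(123)$ in both cases. The mechanism, however, is genuinely different. The paper never compares $\mu_{123}(ab)$ with $\mu_{123}(ba)$ and never invokes a composition formula; instead it forms the commutator $\sigma_1^2\sigma_2^2\sigma_1^{-2}\sigma_2^{-2}$, observes that its closure is the Borromean rings, and argues: $\mu(123)$ is a $C_3$-concordance (hence $C_k$-equivalence, $k\ge 3$) invariant, it is nonzero on this commutator and zero on the trivial string link, so the commutator is nontrivial in the quotient group, whence $\sigma_1^2$ and $\sigma_2^2$ do not commute there. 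This costs nothing computationally (one cites the classical value of $\mu(123)$ on the Borromean rings), and because it is run modulo $C_3$-concordance it simultaneously proves the ``only if'' part of Theorem~\ref{abelian2}~(2). Your route instead needs the asymmetric stacking formula, which is the one substantive step you defer; it is true and provable exactly as you indicate: multiplying longitudes via $\lambda_3^{ab}=\lambda_3^a\cdot\phi_a(\lambda_3^b)$, where $\phi_a$ conjugates each meridian by the corresponding longitude of $a$, gives (up to sign conventions) $\mu_{ab}(123)=\mu_a(123)+\mu_b(123)+l_a(12)l_b(23)-l_a(12)l_b(13)+l_a(13)l_b(23)$, so $\mu_{ab}(123)=\pm 1$ while $\mu_{ba}(123)=0$ for your clasps. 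In exchange, your argument buys two things: it works at the level of the quotient \emph{monoid} (no inverses, so no appeal to Habiro's theorem that $\mathcal{SL}(n)/C_k$ is a group, which the paper's commutator trick implicitly uses), and it makes the failure of commutativity quantitative rather than resting on recognizing the closure of a commutator as a known link. Both proofs pass from $n=3$ to general $n\ge 3$ by adjoining trivial strands, a step the paper leaves implicit and you make explicit.
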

Hence our study is reduced to the case of $2$-string links. 
In \cite{akira}, the second author showed that $\mathcal{SL}(2)/C_k$ is not abelian for $k\geq 12$. 
Here we improve this result and give an almost complete answer for this \lq abelian problem'. 
\begin{theorem}\label{abeliansl}
(1).~The group $\mathcal{SL}(2)/C_k$ of $C_k$-equivalence classes of $2$-string links is abelian for $k<7$. \\
(2).~The group $\mathcal{SL}(2)/C_k$ is \emph{not} abelian for $k>7$. 
\end{theorem}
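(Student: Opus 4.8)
The plan is to realize $\mathcal{SL}(2)/C_k$ as a finitely generated nilpotent group and reduce the entire question to a bracket computation in its associated graded Lie ring. Following Habiro's clasper calculus, every element of $\mathcal{SL}(2)/C_k$ is a surgery on $\1$ along a union of tree claspers, and the classes that are $C_j$-equivalent to $\1$ form a central filtration $\Gamma_1\supseteq \Gamma_2\supseteq\cdots$ with $[\Gamma_p,\Gamma_q]\subseteq\Gamma_{p+q}$; here $\Gamma_1$ is the whole group and $\Gamma_k$ is trivial. Each graded quotient $\Gamma_j/\Gamma_{j+1}$ is a finitely generated abelian group spanned by surgeries along connected degree-$j$ tree claspers whose leaves are colored by the two strings, subject to the AS and IHX relations together with the string-link relations coming from sliding and doubling leaves. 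The group commutator induces a bracket $\Gamma_p/\Gamma_{p+1}\times\Gamma_q/\Gamma_{q+1}\to\Gamma_{p+q}/\Gamma_{p+q+1}$, realized geometrically by the clasper commutator, in which two trees are joined along a pair of leaves lying on the same string. Since a nonzero degree-$m$ element survives in $\mathcal{SL}(2)/C_k$ exactly when $m<k$, the abelianness of $\mathcal{SL}(2)/C_k$ is governed entirely by the degree in which the first nonzero bracket of two-colored trees occurs.

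For part (1) I would show that every such bracket vanishes through degree $5$. Concretely, I would enumerate the generators of $\Gamma_j/\Gamma_{j+1}$ for $j\le 5$ — a short list, since there are only two colors — and compute all clasper commutators landing in degrees $2,3,4,5$. The strategy is to push each commutator tree, via IHX and the leaf relations, into the span of trees that are killed in $\Gamma_{p+q}/\Gamma_{p+q+1}$: for instance trees with two same-colored leaves that can be made to bound a disk, or trees that cancel in pairs by antisymmetry. Because in $\mathcal{SL}(2)/C_k$ with $k\le 6$ only degrees $\le 5$ survive, the vanishing of all brackets up to degree $5$ yields that $\mathcal{SL}(2)/C_k$ is abelian for $k<7$.

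For part (2) I would exhibit an explicit nonzero bracket in degree $7$. The idea is to take two string links $L,L'$, each obtained by surgery along a single low-degree colored tree (say of degrees $3$ and $4$, or $2$ and $5$), chosen so that their clasper commutator is a degree-$7$ tree that does not die under AS and IHX. To certify non-triviality I would detect $[L,L']$ by a length-$8$ Milnor invariant $\overline{\mu}_I$ with $I\in\{1,2\}^8$: for string links these are honest $\modZ$-valued invariants read off from the longitudes in the free nilpotent group, and by the clasper--Milnor correspondence $\overline{\mu}_I$ of a tree surgery records exactly the corresponding colored tree. Equivalently, one computes the commutator through the Magnus expansion, where stacking is multiplication in the free group on two generators and $\overline{\mu}_I([L,L'])$ is the relevant coefficient of the group commutator, which I expect to be nonzero in degree $7$. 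As a degree-$7$ element survives in $\mathcal{SL}(2)/C_k$ iff $k\ge 8$, this shows $\mathcal{SL}(2)/C_k$ is not abelian for $k>7$.

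The main obstacle is the interplay of the two halves across the borderline degree $6$. The vanishing in part (1) is a finite but delicate case analysis, and pushing it one degree further, to degree $6$, is precisely what is needed (together with the degree-$7$ example) to decide the open case $k=7$; it is the difficulty of controlling the degree-$6$ brackets for two colors that forces the gap at $k=7$. On the non-vanishing side, the crux is choosing the pair $(L,L')$ so that the commutator tree genuinely survives the IHX and antisymmetry relations and is seen by a computable Milnor invariant. Establishing this non-vanishing rigorously — as opposed to the soft vanishing argument of part (1) — is where I expect the real work to lie.
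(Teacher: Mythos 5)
Your strategy for part (2) has a fatal gap: the certificate you propose can never be nonzero. You want to detect the degree-$7$ commutator of two tree surgeries (say of degrees $3$ and $4$) by a length-$8$ Milnor invariant $\overline{\mu}_I$, $I\in\{1,2\}^8$, computed via the Magnus expansion. But Milnor invariants of length $\le 8$ are invariants of concordance and of $C_8$-equivalence, hence of $C_8$-concordance, and Theorem \ref{abelian2}~(1) of this paper shows that $\mathcal{SL}(2)/(C_8+c)$ \emph{is} abelian. Consequently $L\cdot L'$ and $L'\cdot L$ are $C_8$-concordant for \emph{every} pair of $2$-string links, so $\overline{\mu}_I(L\cdot L')=\overline{\mu}_I(L'\cdot L)$ for all $|I|\le 8$: the coefficient you ``expect to be nonzero in degree $7$'' vanishes identically. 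The conceptual reason is the symmetry behind Lemma \ref{o-index2}: a linear tree of odd degree (in particular degree $7$) yields a string link $C_8$-concordant to its orientation-reverse, whereas any invariant certifying noncommutativity of products of symmetric generators must distinguish some string link from its reverse, since $\overline{L\cdot L'}=\overline{L'}\cdot\overline{L}$. This is exactly why the paper abandons concordance-type invariants here and instead imports the Duzhin--Karev result that $D_H\cdot D_S\neq D_S\cdot D_H$ in $\mathcal{A}_7(2)$, transported to string links by the (inverse) Kontsevich integral: the degree-$7$ obstruction is a non-tree, orientation-detecting phenomenon invisible to Milnor invariants. Your Magnus computation would first succeed at length $9$ --- this is precisely the paper's Section \ref{sec:ruby} --- but that only proves $\mathcal{SL}(2)/C_k$ nonabelian for $k\ge 9$ and misses the case $k=8$ required by Theorem \ref{abeliansl}~(2).

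For part (1), your outline (generating sets for the graded quotients $\mathcal{SL}_i(2)/C_{i+1}$, brackets of degrees $p,q$ with $p+q\ge 6$ dying automatically, then a finite check of the remaining brackets) coincides with the paper's strategy, but the mechanism you propose for the remaining vanishings --- cancellation of commutator trees under AS, IHX and leaf relations alone --- is not how these brackets are killed, and it is unclear it could be: the degree-$5$ quotient $\mathcal{SL}_5(2)/C_6$ is not known to be presented by trees modulo those relations (that is essentially the Goussarov--Habiro conjecture), so diagrammatic cancellation does not by itself certify triviality of a product of degree-$5$ clasper surgeries. The paper's actual killing step is once again orientation-reversal symmetry: clasper calculus decomposes $H\cdot Y$ into a product of pieces, each $C_6$-equivalent to its own reverse, whence $H\cdot Y\stackrel{C_6}{\sim}\overline{H\cdot Y}=Y\cdot H$ because $Y$ and $H$ are symmetric; the pairs such as $Y$ and $Y^\ast$ are then handled by the algebraic Lemma \ref{lem:commute1}. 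You would need this symmetry argument (or a genuine substitute) to carry out the ``delicate case analysis'' you defer, so even in part (1) the proposal is missing the idea that makes the vanishing provable.
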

Although the case $k=7$ remains open so far, we show the following.  
\begin{theorem}\label{prop:c7}
 The group $\mathcal{SL}(2)/C_7$ is abelian if it has no $2$-torsion.
\end{theorem}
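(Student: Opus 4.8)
The plan is to show that the entire obstruction to commutativity of $G:=\mathcal{SL}(2)/C_7$ is concentrated in one central subgroup and is annihilated by $2$; the no‑$2$‑torsion hypothesis then forces it to vanish. Throughout, for $k\ge 1$ let $Y_k\subseteq G$ denote the subgroup represented by $2$‑string links that are $C_k$‑equivalent to $\1$, so that clasper calculus gives a filtration $G=Y_1\supseteq Y_2\supseteq\cdots\supseteq Y_7=\{\1\}$ with $[Y_i,Y_j]\subseteq Y_{i+j}$.

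First I would reduce everything to the top graded quotient. By Theorem \ref{abeliansl}(1) the group $\mathcal{SL}(2)/C_6$ is abelian, and the kernel of the projection $G\to\mathcal{SL}(2)/C_6$ is exactly $Y_6$. Hence $[G,G]\subseteq Y_6$, and since $[Y_6,Y_1]\subseteq Y_7=\{\1\}$ the subgroup $Y_6$ is central. Thus $G$ sits in a central extension
\[
 1\longrightarrow Y_6\longrightarrow G\longrightarrow \mathcal{SL}(2)/C_6\longrightarrow 1
\]
with abelian quotient, and the group commutator descends to a well-defined alternating bilinear pairing
\[
 \omega\colon \mathcal{SL}(2)/C_6\times \mathcal{SL}(2)/C_6\longrightarrow Y_6 .
\]
Now $G$ is abelian if and only if $\omega=0$; moreover, by Theorem \ref{abeliansl}(1) all the associated graded brackets $Y_i/Y_{i+1}\otimes Y_j/Y_{j+1}\to Y_{i+j}$ with $i+j\le 5$ vanish, so $\omega$ is computed, through clasper calculus, by the degree‑$6$ brackets $Y_i/Y_{i+1}\otimes Y_j/Y_{j+1}\to Y_6$ with $i+j=6$ — precisely the bracket computations underlying Theorem \ref{abeliansl}.

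The heart of the argument is to prove that $\omega$ is $2$‑torsion valued, i.e. $2\,\omega=0$. For this I would use the geometric symmetry of $2$‑string links obtained by turning a string link upside down (the $\pi$‑rotation about a horizontal axis at mid‑height). This operation reverses the stacking order, hence induces an \emph{anti}\nobreakdash-automorphism $\rho$ of each group $\mathcal{SL}(2)/C_k$, and it carries tree claspers to tree claspers. The key point to establish is that, after accounting for the reversal of the two string orientations and for the reversal of the internal clasper data, $\rho$ acts as the identity on the relevant graded quotients. Granting this, and using that $\rho$ fixes $Y_6$ pointwise, that $\rho$ is an anti‑automorphism, and that $[x^{-1},y^{-1}]=[x,y]$ for central commutators, one obtains for all $a,b\in G$
\[
 [a,b]=\rho\bigl([a,b]\bigr)=\bigl[\rho(b),\rho(a)\bigr]=-\,\omega\bigl(\rho(a),\rho(b)\bigr)=-\,[a,b],
\]
so that $\omega=-\omega$, that is $2\,\omega=0$.

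Granting $2\,\omega=0$, the commutator subgroup $[G,G]\subseteq Y_6$ is an abelian group generated by elements of order dividing $2$, hence is itself a $2$‑torsion group. If $G$ has no $2$‑torsion this forces $[G,G]=\{\1\}$, i.e. $G$ is abelian, which is the assertion. The step I expect to be the main obstacle is the sign bookkeeping in the previous paragraph: in the clasper/tree‑diagram model of the graded quotients one must verify that the upside‑down symmetry really does act as the identity on $Y_6=Y_6/Y_7$ and compatibly on the lower quotients feeding into $\omega$. This is where the leg‑count and vertex‑orientation signs of a degree‑$6$ tree clasper enter, and it is the same delicate bracket analysis that, one degree higher, produces the genuine non‑commutativity recorded in Theorem \ref{abeliansl}(2).
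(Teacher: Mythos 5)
Your structural reduction is sound: $[G,G]\subseteq Y_6$ with $Y_6$ central, the commutator descends to an alternating pairing $\omega$, and once every commutator satisfies $[a,b]^2=\mathbf{1}$ the no-$2$-torsion hypothesis finishes the proof (this last mechanism is in fact exactly how the hypothesis enters the paper's own argument, inside the proof of Lemma \ref{lem:commute2}). The genuine gap is the step you yourself flag and then grant: that the flip anti-automorphism $\rho$ (the paper's $L\mapsto\overline{L}$) acts as the identity on the relevant graded quotients, and in particular pointwise on $Y_6$. This is not sign bookkeeping. It is precisely the statement that orientation reversal acts trivially on $2$-string links through degree $6$, a statement that is \emph{false} one degree higher: by Duzhin--Karev \cite{DK} the involution acts nontrivially in degree $7$, which is exactly how Section \ref{sec:proof2} proves Theorem \ref{abeliansl}(2). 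So no formal symmetry argument can establish it; rationally it amounts to Bar-Natan's computer computations \cite{BNcalc}, and your argument needs strictly more than the rational statement. Indeed, granting your claim that $\overline{a}\equiv a$ modulo $Y_6$ for the generators involved, centrality and bilinearity give $\overline{[a,b]}=[\,\overline{b}^{-1},\overline{a}^{-1}]=[b,a]=[a,b]^{-1}$ \emph{exactly}; hence knowing only that $\rho$ is the identity on $Y_6$ modulo torsion (all that a rational computation can give) yields only that $[a,b]^{2}$ is torsion, and with no $2$-torsion one concludes that commutators have odd order --- not that they vanish. Worse, once $\overline{[a,b]}=[a,b]^{-1}$ is in hand, your claim that $\rho$ fixes $Y_6$ pointwise, evaluated on commutators, is \emph{logically equivalent} to the desired identity $[a,b]^2=\mathbf{1}$; so as written the proposal reduces the theorem to an unproven integral statement at least as strong as the theorem itself. (A smaller issue of the same kind: your claim (i) is clear for $Y,H,D$, which are symmetric on the nose, but for the degree-$4$ generators $\sigma^1_\alpha,\sigma^2_{id}$ no symmetry is established; having the same o-index only gives equality-or-inverse up to concordance, cf.\ Lemma \ref{o-index}.)

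It is worth contrasting this with how the paper gets around the problem: it runs the same ``commutator squares to the identity'' mechanism, but sources all symmetry input from degrees $\le 4$, where it can be exhibited by hand. The Whitehead-type symmetries of $Y$ and $D$ are used to prove that $Y^2$ and $(Y^*)^2$ (resp.\ $D^2$ and $(D^*)^2$) are $C_3$-equivalent (resp.\ $C_4$-equivalent) to central elements (Claims \ref{lem:ycentral} and \ref{lem:dcentral}), and then the purely algebraic Lemmas \ref{lem:commute1} and \ref{lem:commute2} dispose of the cases $(3,3)$, $(2,4)$, $(2,3)$, $(2,2)$ one at a time; no control of the involution in degree $5$ or $6$ is ever required. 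To salvage your route you would have to prove, integrally, that surgery on every simple $C_6$-tree yields a $2$-string link $C_7$-equivalent to its orientation reverse --- a degree-$6$ clasper computation comparable to, and currently beyond, what is known (compare Remark \ref{rem:by_hand}).
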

\begin{remark}
Bar-Natan's computations \cite{BNcalc} shows that $7$ is the smallest degree of a finite type invariant 
that can detect the orientation of $2$-string links. 
The existence of such an invariant was shown by Duzhin and Karev \cite{DK}, and this result is used in 
Section \ref{sec:proof2} to prove Theorem \ref{abeliansl} (2). 
Likewise, it would be very interesting to determine whether there exists a $\mathbb{Z}_2$-valued finite type invariant of degree $6$ 
that can detect the orientation of $2$-string links. 
\end{remark}

The `abelian problem' addressed above is deeply related to one of the main results in the theory of finite type invariants, 
due to Habiro \cite{H} and Goussarov \cite{G} independently, 
which gives a topological characterization of the informations contained by finite type invariants of \emph{knots}. 
\begin{theorem}[\cite{G,H}]\label{cnknots}
Two knots cannot be distinguished by any finite type invariant of order less than $k$ if and only if they 
are $C_k$-equivalent. 
\end{theorem}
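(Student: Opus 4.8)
The plan is to prove the equivalence by reformulating it in terms of the Vassiliev filtration and then inducting on $k$ via clasper calculus. Write $\mathbb{Z}[\mathcal{K}]$ for the free abelian group on isotopy classes of knots, and let $\mathcal{F}_k \subset \mathbb{Z}[\mathcal{K}]$ be the $k$th term of the Vassiliev filtration, generated by the alternating sums of the $2^k$ knots obtained by resolving a knot with $k$ transverse double points. A finite type invariant of order $<k$ is exactly a homomorphism $\mathbb{Z}[\mathcal{K}] \to A$, for some abelian group $A$, that vanishes on $\mathcal{F}_k$. Testing against the universal such invariant $\mathbb{Z}[\mathcal{K}] \to \mathbb{Z}[\mathcal{K}]/\mathcal{F}_k$ shows that two knots $K,K'$ are indistinguishable by all finite type invariants of order $<k$ if and only if $K-K' \in \mathcal{F}_k$. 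Thus it suffices to prove that $K$ and $K'$ are $C_k$-equivalent if and only if $K-K' \in \mathcal{F}_k$.

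The ``only if'' direction is the easy one. Through the clasper calculus (Section \ref{claspers}), a single $C_k$-move is realized as surgery along a simple tree clasper of degree $k$, that is, one with $k+1$ leaves. A basic identity of clasper calculus expresses the difference $K - K_C$ between a knot and its surgery along a degree-$k$ clasper $C$ as an alternating sum realizing it as an element of $\mathcal{F}_k$ (a resolution producing $k$ double points). If $K$ and $K'$ are $C_k$-equivalent, then choosing a finite sequence $K = K_0, K_1, \dots, K_m = K'$ of knots related by single $C_k$-moves and summing the telescoping differences $K_i - K_{i+1} \in \mathcal{F}_k$ yields $K - K' \in \mathcal{F}_k$, since $\mathcal{F}_k$ is a subgroup.

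For the converse I would induct on $k$, the base case $k=1$ being the elementary fact that any two knots are related by crossing changes. Assume the statement for $k$ and suppose $K - K' \in \mathcal{F}_{k+1} \subseteq \mathcal{F}_k$. By the inductive hypothesis $K$ and $K'$ are $C_k$-equivalent, so their difference is captured by an element $\delta$ of the graded group $\mathcal{G}_k := \ker\left(\mathcal{K}/C_{k+1} \to \mathcal{K}/C_k\right)$, and it is enough to show $\delta = 0$. Here the clasper calculus provides a surjection $\phi : \mathcal{A}^t_k \twoheadrightarrow \mathcal{G}_k$ from the abelian group of degree-$k$ tree Jacobi diagrams, with antisymmetry and IHX relations realized by explicit clasper moves, so that $\mathcal{G}_k$ is generated by connected degree-$k$ clasper surgeries. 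On the other hand, the hypothesis $K - K' \in \mathcal{F}_{k+1}$ forces every degree-$k$ finite type invariant to agree on $K$ and $K'$; translating this through the weight-system pairing between degree-$k$ invariants and $\mathcal{A}^t_k$ shows that $\delta$ pairs trivially with every degree-$k$ invariant, whence $\delta = 0$ and $K,K'$ are $C_{k+1}$-equivalent.

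The main obstacle is this final step: establishing, at the integral level, that finite type invariants of degree $k$ detect $\mathcal{G}_k$, equivalently that the clasper-defined filtration coincides with $\{\mathcal{F}_k\}$ in each graded quotient. Over $\mathbb{Q}$ one could invoke the universality of the Kontsevich integral, but the integral statement requires Habiro's calculus of claspers directly: one must check that $\phi$ respects all defining relations and is compatible with the map induced by finite type invariants valued in arbitrary abelian groups, so that no torsion phenomenon obstructs the separation of $\mathcal{G}_k$. Proving the surjectivity of $\phi$ together with this separation, that is, matching the two filtrations degree by degree, is the technical heart of the argument.
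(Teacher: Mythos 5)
Your reduction to the Vassiliev filtration and your ``only if'' direction are correct (and match the standard argument, cf.\ \cite[Cor.~6.8]{H}), but the converse contains a genuine gap exactly where you locate it, and it is not a technicality that can be patched by the tools you name: the step ``$\delta$ pairs trivially with every degree-$k$ invariant, whence $\delta=0$'' \emph{is} the theorem, restated at the graded level. The surjection $\phi\colon \mathcal{A}^t_k\rightarrow\mathcal{G}_k$ only produces generators of $\mathcal{G}_k$; to conclude $\delta=0$ from the vanishing of all degree-$k$ invariants on $\delta$, you need precisely that finite type invariants of degree $k$ separate $C_k$-equivalent knots modulo $C_{k+1}$-equivalence, which is the hard implication of Theorem~\ref{cnknots}. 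The weight-system pairing cannot supply this: it is faithful only rationally (via the Kontsevich integral), whereas the theorem is integral, and rational invariants are blind to possible torsion in $\mathcal{G}_k$. So as written the plan is circular, and your closing admission that the ``separation'' remains to be proved means the proposal stops exactly where the proof must begin. (Note also that defining $\mathcal{G}_k$ as a kernel already presupposes Habiro's theorem that knots modulo $C_{k+1}$-equivalence form a group.)

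The argument of Goussarov and Habiro --- recalled in this paper's Appendix~\ref{app}, Proposition~\ref{propapp} --- avoids proving any nondegeneracy statement by exhibiting a single tautological invariant instead. Since knots modulo $C_{k+1}$-equivalence form an \emph{abelian} group under connected sum (this is where commutativity enters, and it is a theorem of Habiro for knots), the map $K\mapsto [K]_{k+1}$ extends to a homomorphism $\varphi_k\colon \mathbb{Z}\mathcal{K}\rightarrow \mathcal{K}/C_{k+1}$ from the free abelian group on knots. The deep clasper-theoretic input, \cite[Prop.~6.10 and 6.16]{H}, is that $\varphi_k$ vanishes on $\mathcal{F}_{k+1}$: the filtration $\mathcal{F}_{k+1}$ is generated by alternating sums over surgeries along forest claspers of sufficiently high total degree, and each such alternating sum telescopes to the identity in the abelian quotient. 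Hence $\varphi_k$ is itself a finite type invariant of degree $k$ valued in an abelian group, and $K-K'\in\mathcal{F}_{k+1}$ immediately yields $[K]_{k+1}=[K']_{k+1}$ --- no induction on $k$, no graded group $\mathcal{G}_k$, and no separation lemma. This is the idea missing from your proposal; supplying it amounts to proving \cite[Prop.~6.10]{H}, which is the technical heart of Habiro's paper, not to ``checking that $\phi$ respects the relations.''
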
  
\noindent It was indeed conjectured by both Goussarov and Habiro that, although it fails to hold for links, 
Theorem \ref{cnknots} may generalize to string links.  
One of the key ingredient of Habiro's proof of Theorem \ref{cnknots}, based on the theory of claspers, is the fact 
that the set of $C_k$-equivalence classes of knots forms an abelian group for all $k\ge 1$.   
As a matter of fact, his techniques apply to the string link case, and the fact that the group  $\mathcal{SL}(n)/C_k$ 
is abelian implies that the conjecture holds at the corresponding degree 
(see Appendix \ref{app} for further explanation).  
Hence Theorem~\ref{abeliansl}~(1) gives us the following.  
\begin{corollary}\label{cor1}
For any integer $k\leq 6$, two 2-string links cannot be distinguished by any finite type invariant of order less than $k$ if and only if they 
are $C_k$-equivalent. 
\end{corollary}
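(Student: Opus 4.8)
The plan is to derive the corollary from Theorem~\ref{abeliansl}~(1) together with the string-link version of Habiro's proof of Theorem~\ref{cnknots} that is carried out in Appendix~\ref{app}. The biconditional separates into two implications of quite unequal difficulty, and essentially all of the content sits in one of them.

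First I would dispose of the \emph{if} direction, which needs no hypothesis on $n$ or $k$: if two $n$-string links are $C_k$-equivalent, then they agree on every finite type invariant of order less than $k$. This is immediate from the correspondence between the $C_k$-filtration and the Vassiliev filtration underlying Theorem~\ref{cnknots}: a single $C_k$-move alters a string link by an element lying in the $k$-th term of the Vassiliev filtration, on which any invariant of order at most $k-1$ vanishes. Since $C_k$-equivalence is generated by $C_k$-moves and isotopy, the two links are not separated by such invariants.

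The real substance is the \emph{only if} direction, asserting that two $2$-string links sharing all finite type invariants of order less than $k$ must be $C_k$-equivalent. Here I would first apply Theorem~\ref{abeliansl}~(1) to conclude that $\mathcal{SL}(2)/C_k$ is abelian for every $k\le 6$, and then transport Habiro's argument to the string-link setting as in Appendix~\ref{app}. The mechanism is that, by the \emph{if} direction, finite type invariants of order less than $k$ descend to functions on the group $\mathcal{SL}(2)/C_k$; Habiro's universal invariant then shows that, once this group is a finitely generated \emph{abelian} group, these invariants form a complete system of invariants on it. Consequently, agreement of all finite type invariants of order less than $k$ forces equality in $\mathcal{SL}(2)/C_k$, which is exactly $C_k$-equivalence.

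The crux, and the reason the range is limited to $k\le 6$, is the abelianness hypothesis itself. For knots the analogous group is automatically abelian since connected sum is commutative, whereas for string links the stacking product is non-commutative, and this is precisely what blocks a naive generalization of Theorem~\ref{cnknots}. The delicate point is thus to locate exactly where commutativity of $\mathcal{SL}(2)/C_k$ is used in the clasper calculus of Appendix~\ref{app} and to verify that Theorem~\ref{abeliansl}~(1) supplies this input for all $k\le 6$; the failure of abelianness for $k\ge 7$ recorded in Theorem~\ref{abeliansl}~(2) indicates that the argument genuinely cannot be pushed beyond this range.
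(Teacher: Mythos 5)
Your proposal is correct and follows essentially the same route as the paper: the \emph{if} direction is the standard fact that $C_k$-equivalent string links share all invariants of degree less than $k$, and the \emph{only if} direction combines Theorem~\ref{abeliansl}~(1) with the string-link adaptation of Habiro's argument given in Appendix~\ref{app} (Proposition~\ref{propapp}), where abelianness of $\mathcal{SL}(2)/C_k$ is exactly what makes the universal map $\varphi_{k-1}\colon \mathbb{Z}\mathcal{SL}(2)\to\mathcal{SL}(2)/C_k$ well defined and a finite type invariant of degree $k-1$. The only minor imprecision is that commutativity enters in this algebraic well-definedness (and in applying \cite[Prop.~6.10]{H}), not in any clasper calculus, and finite generation of the quotient group is not actually needed.
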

\begin{remark}
The Goussarov-Habiro conjecture for string links was shown to be true at low degree by various authors. 
It is easy to check for $k=2$, using the linking number \cite{MN}, and was 
proved for $k=3$ by the first author in \cite{jbjktr}.  
Massuyeau gave a proof for $k=4$, using algebraic arguments \cite{massuyeau}.  
In \cite{MYftisl}, the authors classified string links up to $C_k$-equivalence for $k\le 5$, 
by explicitly giving a complete set of low degree finite type invariants, 
and proved the Goussarov-Habiro Conjecture for $k\le 5$ as a byproduct.  
\end{remark}

Since we have two equivalence relations, concordance and $C_k$-equivalence, 
that provide group structures on the set of string links, it is natural to combine them to get a new group structure. 
We call this equivalence relation on string links generated by $C_k$-moves and concordance 
the {\em $C_k$-concordance} \cite{MYftisl}. 
The $C_k$-concordance is very closely related to Whitney tower concordance of order $k-1$ studied in \cite{CST} 
by Conant, Schneiderman and Teichner,\footnote{
These two equivalence relations are actually equivalent, as announced in \cite{CST}. }
and to finite type concordance invariants. 

In Section \ref{sec:ckconc}, we investigate whether the group  
$\mathcal{SL}(n)/(C_k+c)$ of $C_k$-concordance classes of $n$-string links is abelian. 
In the knot case, it is known that $\mathcal{SL}(1)/(C_k+c)$ is trivial for $k=1$ or $2$, and 
that $\mathcal{SL}(1)/(C_k+c)$ is isomorphic to ${\Bbb Z}/2{\Bbb Z}$ for $k\geq 3$ \cite{ng,MYftisl}.
We show the following. 
\begin{theorem}\label{abelian2}
\begin{enumerate}
 \item The group $\mathcal{SL}(2)/(C_k+c)$ is abelian if and only if $k\leq 8$. 
 \item For $n\geq 3$, the group $\mathcal{SL}(n)/(C_k+c)$ is abelian if and only if $k\leq 2$. 
\end{enumerate}
\end{theorem}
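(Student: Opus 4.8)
The plan is to treat the boundary phenomena separately, exploiting throughout the canonical surjection $\mathcal{SL}(n)/C_k \twoheadrightarrow \mathcal{SL}(n)/(C_k+c)$: since $C_k$-concordance is generated by $C_k$-equivalence together with concordance, the $C_k$-concordance group is a quotient of the $C_k$-equivalence group. Consequently, whenever $\mathcal{SL}(n)/C_k$ is already abelian the quotient $\mathcal{SL}(n)/(C_k+c)$ is abelian for free, while for the converse it suffices to produce an invariant that is simultaneously a $C_k$-invariant and a concordance invariant and that distinguishes $LL'$ from $L'L$ for some pair $L,L'$. The abelian statements thus reduce to showing that the finitely many commutator classes obstructing commutativity at the $C_k$-equivalence level actually bound concordances, and the non-abelian statements to producing a single orientation-detecting finite type concordance invariant of the right degree.

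Part (2) is the cleaner half. For $k\le 2$ the group $\mathcal{SL}(n)/C_2$ is abelian, since linking numbers classify $C_2$-equivalence, so its quotient $\mathcal{SL}(n)/(C_2+c)$ is abelian. For $k\ge 3$ I would re-examine the pair of $n$-string links used in the proof of Proposition~\ref{nonabeliansl}, whose commutator is detected by Milnor's triple linking number $\bar\mu_{123}$. The essential point is that $\bar\mu_{123}$, being a Milnor invariant, is a concordance invariant, and, having order $2$, is a $C_k$-invariant for every $k\ge 3$; hence it descends to $\mathcal{SL}(n)/(C_k+c)$ and still separates $LL'$ from $L'L$. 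This shows $\mathcal{SL}(n)/(C_k+c)$ is non-abelian for all $k\ge 3$, $n\ge 3$, completing part (2).

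For the abelian half of part (1), the range $k\le 6$ is immediate from Theorem~\ref{abeliansl}~(1) together with the surjection above. The genuine work is at $k=7$ and $k=8$. Here I would show that the commutator subgroup of $\mathcal{SL}(2)/C_k$ — which for $k=8$ is nontrivial and is carried precisely by the degree-$7$ orientation-detecting behaviour isolated in the Remark following Theorem~\ref{prop:c7} — maps to zero in the concordance quotient. Concretely, one analyses a commutator $[L,L']$ by clasper (or Whitney tower) calculus, reduces it modulo $C_k$ to a standard generator supported on a bounded number of strands, and then exhibits an explicit concordance from that generator to $\1_2$. The key algebraic input is that the obstructing degree-$7$ invariant is a finite type invariant but \emph{not} a concordance invariant, so nothing survives in the concordance quotient to obstruct the vanishing; at $k=7$ this simultaneously dissolves the $2$-torsion caveat of Theorem~\ref{prop:c7}. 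I expect this concordance-triviality of the boundary commutators to be the main obstacle of the whole argument.

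Finally, for $k\ge 9$ one must break commutativity in the concordance quotient, which by the surjection forces the use of a concordance invariant. The strategy mirrors the proof of Theorem~\ref{abeliansl}~(2): I would construct a finite type invariant $v$ of degree $8$ that detects the orientation of $2$-string links and is in addition a concordance invariant, the natural candidates being Milnor-type (tree) invariants of length $9$, which are automatically concordance invariants. Being of order $8<k$ it is a $C_k$-invariant for $k\ge 9$, so it descends to $\mathcal{SL}(2)/(C_k+c)$, and its orientation-sensitivity yields a pair with $v(LL')\ne v(L'L)$. Such an invariant cannot be arranged in degree $7$ — otherwise $k=8$ would already be non-abelian — so its genuine first appearance at degree $8$, exactly one step beyond the non-concordant threshold of Duzhin–Karev, is what pins down the sharp value $k=8$ in the statement; verifying both the existence and the concordance-invariance of $v$ is the crux of this direction.
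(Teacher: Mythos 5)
Your part (2) and your reduction for $k\le 6$ are sound and essentially match the paper: the paper also proves the non-abelian half of (2) with the braid commutator $\sigma_1^2\sigma_2^2\sigma_1^{-2}\sigma_2^{-2}$, whose closure is the Borromean rings, detected by the length-3 Milnor invariant $\mu(123)$, which is a $C_3$-concordance invariant; and the \lq if' half via the linking-number classification of $C_2$-equivalence. The genuine gaps are both in part (1). For the abelian direction, your argument only covers $k\le 6$; at $k=7,8$ you propose to show that the commutator subgroup of $\mathcal{SL}(2)/C_8$ dies in the concordance quotient, on the grounds that the Duzhin--Karev degree-7 obstruction is a finite type invariant but not a concordance invariant. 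That is not a proof: the disappearance of one known obstruction does not make commutators concordance-trivial, and the structure of $\mathcal{SL}(2)/C_8$ is unknown (even the abelianness of $\mathcal{SL}(2)/C_7$ is open, see Remark~\ref{rem:by_hand}), so the step \lq\lq reduce $[L,L']$ modulo $C_k$ to a standard generator and concord it to $\1_2$'' is precisely the missing construction, not a routine verification. The paper never passes through the commutator subgroup of $\mathcal{SL}(2)/C_8$: it works directly in the concordance quotient, building generating sets $\mathcal{H}^c_i$ ($i\le 7$) for the graded pieces $\mathcal{SL}^c_i(2)/(C_{i+1}+c)$ from linear trees labelled by ordered indices, cutting them down with concordance-specific clasper lemmas (Lemma~\ref{graph} on loop claspers, Lemma~\ref{o-index}, Lemma~\ref{annoying}, Lemma~\ref{o-index2}), and then checking commutativity of generators case by case; the case $(3,4)$ is handled by a symmetry/orientation-reversal argument, and the remaining cases by Lemma~\ref{lem:commute3}, whose hypothesis is secured by showing that $\mathcal{SL}^c_7(2)/(C_8+c)$ is free abelian of rank $6$ (six independent length-8 Milnor invariants), hence $2$-torsion free. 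Note also that this torsion-freeness concerns the concordance-graded quotient, not $\mathcal{SL}(2)/C_7$, so it does not \lq\lq dissolve'' the caveat of Theorem~\ref{prop:c7} as you suggest.

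For the non-abelian direction $k\ge 9$, you correctly identify length-9 Milnor invariants as the right tool (they are concordance invariants by Casson, and of degree $8<k$), but you explicitly defer \lq\lq the existence and the concordance-invariance of $v$'' --- which is the entire content of this half. The paper settles it not by an abstract existence argument but by exhibiting explicit $2$-string links $A,B$ and computing, by a computer implementation of Milnor's algorithm, the Magnus expansion of the first longitude of $AB\overline{A}\overline{B}$, finding nonvanishing degree-9 terms; hence $AB\overline{A}\overline{B}$ is not $C_9$-concordant to $\1_2$ and $A,B$ do not commute in $\mathcal{SL}(2)/(C_9+c)$. Without such an explicit pair (or a genuine existence proof of an orientation-sensitive degree-8 concordance invariant), neither the upper nor the lower side of the sharp threshold $k=8$ is established by your outline.
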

\noindent The proof of (1) is given in Sections \ref{sec:abelianc} and \ref{sec:ruby}. 
In Section \ref{sec:proof0}, we prove the `only if' part in (2).
The `if' part is actually easy to see. 
Indeed, as mentioned above, $\mathcal{SL}(n)/C_1$ is a trivial group and 
$\mathcal{SL}(n)/C_2$ is an abelian group classified by the linking number. 
Since the latter is a $C_2$-concordance invariant, we have that $\mathcal{SL}(n)/C_2$ and $\mathcal{SL}(n)/(C_2+c)$ are isomorphic. 

Since two $C_k$-concordant string links share all finite type concordance invariants of degree less than $k$, 
and it is natural to ask, parallel to the Goussarov-Habiro conjecture, whether the converse is also true. 
This question was raised by the authors in \cite{MYftisl}.
As in the case of the $C_k$-equivalence, Habiro's arguments apply if $\mathcal{SL}(n)/(C_k+c)$ is abelian. 
Hence we have the following corollary. 
\begin{corollary}\label{cor2}
For any integer $k\leq 8$, two 2-string links cannot be distinguished by any finite type 
concordance invariant of order less than $k$ if and only if they 
are $C_k$-concordant. 
\end{corollary}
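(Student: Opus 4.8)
The plan is to deduce Corollary~\ref{cor2} from Theorem~\ref{abelian2}~(1) by running Habiro's clasper-theoretic argument in the concordance setting, exactly parallel to how Corollary~\ref{cor1} follows from Theorem~\ref{abeliansl}~(1). The content of the statement is a refinement of one easy direction. The forward implication is immediate and should be disposed of first: if two $2$-string links $L, L'$ are $C_k$-concordant, then by definition they are related by a sequence of $C_k$-moves and concordances, and every finite type concordance invariant of order less than $k$ is by its very definition unchanged under both a single $C_k$-move and a concordance; hence $L$ and $L'$ share all such invariants. So the whole of the corollary's nontrivial content is the converse.

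For the converse, I would argue contrapositively at the level of the graded quotient. Fix $k\le 8$ and suppose $L$ and $L'$ are \emph{not} $C_k$-concordant while being $C_{k-1}$-concordant (the general case reduces to this by the filtration, peeling off one degree at a time). Because $\mathcal{SL}(2)/(C_k+c)$ is abelian by Theorem~\ref{abelian2}~(1), the associated graded group $G_k := \ker\bigl(\mathcal{SL}(2)/(C_k+c)\to \mathcal{SL}(2)/(C_{k-1}+c)\bigr)$ is a finitely generated abelian group whose generators are explicit degree-$k$ claspers (in fact, by Habiro's theory, arboreal/tree claspers of degree $k$ inserted on the trivial $2$-string link). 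The key point that abelianness buys us is that the difference class of $L$ and $L'$ in $G_k$ is represented by a $\modZ$-linear combination of these standard clasper generators, with no commutator ambiguity. One then needs to construct, for each such generator, a finite type concordance invariant of order exactly $k$ that is nonzero on it; the universal such invariant is obtained from the graded quotient itself, realized via a $\operatorname{Hom}$-dual pairing on $G_k$, following Habiro's construction in \cite{H} and its concordance analogue. This produces a finite type concordance invariant of order less than $k{+}1$, i.e.\ of order $\le k$, distinguishing $L$ from $L'$, which is the desired contradiction with the assumption that they agree on all such invariants; the Appendix~\ref{app} is where this passage from abelianness to the existence of a complete system of invariants is spelled out, so I would lean on it directly.

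The genuinely load-bearing step — and the one I expect to be the main obstacle — is verifying that Habiro's clasper surgery calculus goes through verbatim once \emph{concordance} is added to the equivalence relation, so that $G_k$ really is generated by the degree-$k$ tree claspers and that the pairing with finite type concordance invariants is perfect. The subtlety is that surgery on a clasper must be realized by a sequence of moves that are not merely ambient isotopies and $C_k$-moves but are compatible with concordance, and one must check that the standard clasper identities (the "fundamental" IHX, STU-type, and slide relations) hold modulo $C_{k}$-concordance, not just modulo $C_{k}$-equivalence. Since $C_k$-concordance is coarser than $C_k$-equivalence, all the relations valid up to $C_k$-equivalence remain valid, so the generation statement is actually inherited for free; the real work is only to confirm that no finite type concordance invariant of order $<k$ can separate two $C_k$-concordant links, which is the forward direction already handled, and that the universal degree-$k$ invariant descends to a concordance invariant. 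This last descent is exactly where the abelianness hypothesis of Theorem~\ref{abelian2}~(1) is essential, and it is the crux of the whole corollary.

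Once these pieces are in place the corollary follows formally: abelianness of $\mathcal{SL}(2)/(C_k+c)$ for $k\le 8$ makes the graded pieces into finitely generated abelian groups on which finite type concordance invariants of the appropriate order form a separating family, giving the \lq\lq only if'' direction, while the \lq\lq if'' direction is the trivial invariance observation above.
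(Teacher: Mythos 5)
Your overall strategy does coincide with the paper's: dispose of the easy direction, then deduce the converse from Theorem~\ref{abelian2}~(1) by running Habiro's argument in the concordance setting, which is exactly what Appendix~\ref{app} does via the concordance analogue of Proposition~\ref{propapp}. However, your reconstruction of that argument contains a genuine off-by-one error which, as written, makes your contrapositive fail: assuming $L$ and $L'$ are not $C_k$-concordant, you produce an invariant ``of order exactly $k$'', ``i.e.\ of order $\le k$'', separating them. This does not contradict the hypothesis that $L$ and $L'$ agree on all finite type concordance invariants of order \emph{less than} $k$. All your indices need to shift down by one: the graded piece $G_k=\ker\bigl(\mathcal{SL}(2)/(C_k+c)\to \mathcal{SL}(2)/(C_{k-1}+c)\bigr)$ consists of classes of $(C_{k-1}+c)$-trivial string links, hence is generated by degree-$(k-1)$ tree claspers (not degree-$k$ ones), and the invariant that detects it must have degree $k-1$.

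Second, you mislocate the crux, and the mechanism you propose in its place is both unnecessary and doubtful. The paper's separating invariant is simply the quotient map extended linearly, $\varphi\colon \mathbb{Z}\mathcal{SL}(2)\to \mathcal{SL}(2)/(C_k+c)$, $L\mapsto[L]$. Abelianness of the target is what makes $\varphi$ a well-defined homomorphism of abelian groups (a formal sum $\sum_i a_iL_i$ is unordered, so $\prod_i[L_i]^{a_i}$ must be independent of ordering); it has nothing to do with ``descent to a concordance invariant'', which is automatic because concordance is built into the $C_k$-concordance relation. The genuinely deep ingredient --- absent from your proposal --- is that $\varphi$ is a finite type invariant of degree $k-1$, i.e.\ vanishes on $J_k(2)$; this is Habiro's result \cite[Prop.~6.10 and 6.16]{H}, valid for string links. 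Once that is in place the conclusion is tautological: agreement on all finite type concordance invariants of order $<k$ forces agreement on $\varphi$, i.e.\ $C_k$-concordance. By contrast, your plan requires a \emph{perfect} ``Hom-dual pairing'' between $G_k$ and finite type concordance invariants, built generator by generator. That is stronger than needed and problematic: if $G_k$ had torsion, no $\mathbb{Q}$-valued invariant could detect it (and by Habegger--Masbaum \cite{HMa}, rational finite type concordance invariants are exactly Milnor invariants), so one really must use the group-valued invariant $\varphi$ itself, as the paper does. A final small point: the easy direction is not ``by definition''; invariance of order-$<k$ invariants under $C_k$-moves is a theorem, \cite[Cor.~6.8]{H}.
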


\begin{acknowledgments}
The authors are grateful to Tetsuji Kuboyama for writing the computer program for Milnor invariants used in Section \ref{sec:ruby}, and to Kazuo Habiro for discussions that led to Lemmas \ref{lem:commute1} and \ref{lem:commute2}. 
The first author is supported by the French ANR research project ANR-11-JS01-00201. 
The second author is supported by a JSPS Grant-in-Aid for Scientific Research (C) ($\#$23540074). 
\end{acknowledgments}
\section{Claspers} \label{claspers}

We recall here the main definitions and properties of the theory of claspers, which is one of the main tools of this paper.
For convenience, we restrict ourselves to the case of string links.
For a general definition of claspers, we refer the reader to \cite{H}.

\subsection{A brief review of clasper theory} \label{review}

Let $L$ be a string link.
An embedded surface $g$ is called a {\em graph clasper} for $L$ if it satisfies the following three conditions:
\be
\item $g$ is decomposed into disks and bands, called {\em edges}, each of which connects two distinct disks.
\item The disks have either 1 or 3 incident edges, and are called {\em leaves} or {\em nodes} respectively.
\item $g$ intersects $L$ transversely, and the intersections are contained in the union of the interiors of the leaves.
\ee
In particular, if a graph clasper is a simply connected, we call it a \emph{tree clasper}.  
A graph clasper for a string link $L$ is \emph{simple} if each of its leaves intersects $L$ at one point.

The degree of a connected graph clasper $g$ is defined as half the number of nodes and leaves.
We call a degree $k$ connected graph clasper a \emph{$C_k$-graph}.
A tree clasper of degree $k$ is called a \emph{$C_k$-tree}. 

\begin{convention}\label{conv_star}
Throughout this paper, we make use of the drawing convention for claspers of \cite[Fig. 7]{H}, except for the following:
a $\oplus$ (resp. $\ominus$) on an edge represents a positive (resp. negative) half-twist.
(This replaces the convention of a circled $S$ (resp. $S^{-1}$) used in \cite{H}.)   
\end{convention}

Given a graph clasper $g$ for a string link $L$, there is a procedure to construct,
in a regular neighbourhood of $g$, a framed link $\gamma(g)$.
There is thus a notion of \emph{surgery along $g$}, which is defined as surgery along $\gamma(g)$.
There exists a canonical diffeomorphism between $D^2\times I$ and the manifold $(D^2\times I)_{\gamma(g)}$
fixing the boundary, and surgery along the $C_k$-graph $g$ can thus be regarded as an operation on $L$ in the (fixed)
ambient space  $D^2\times I$.
We say that the resulting string link $L_g$ in $D^2\times I$ is obtained from $L$ by surgery along $g$.
In particular, surgery along a simple $C_k$-tree is a local move as illustrated in Figure~\ref{ckmove}. 

Throughout this paper, we will often define string links in terms of claspers for the trivial string link,
implicitely referring to the result of surgery along this clasper.

\begin{figure}[!h]
 \begin{center}
 \includegraphics{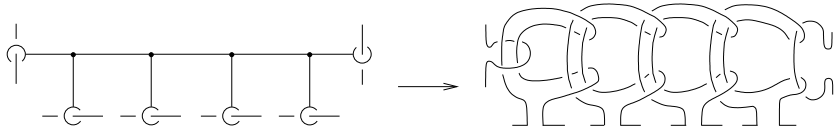}
 \caption{Surgery along a simple $C_5$-tree.} \label{ckmove}
 \end{center}
\end{figure}

The $C_k$-equivalence (as defined in the introduction) coincides with the equivalence relation on string links
generated by surgeries along $C_k$-graphs and isotopies.
In particular, it is known that two links are $C_k$-equivalent if and only if they are related by surgery along
simple $C_k$-trees \cite[Thm. 3.17]{H}.

This family of equivalence relations becomes finer as the index increases, that is, 
the $C_k$-equivalence implies the $C_m$-equivalence if $k>m$.

A string link is called {\em $C_k$-trivial} if it is $C_k$-equivalent to the trivial string link.

\subsection{Standard calculus of Claspers}

In this subsection, we summarize several properties of claspers.
Although similar statements hold in a more general context, it will be convenient to state these results 
for the trivial string link $\mathbf{1}$.
Proofs are omitted, since they involve the same techniques as in \cite[\S 4]{H}, where similar statements appear.

\begin{lemma} \label{lem:slide}
  Let $t_1\cup t_2$ be a disjoint union of a $C_{k_1}$-graph and a $C_{k_2}$-graph for $\mathbf{1}$.
  Let $t'_1\cup t'_2$ be obtained from $t_1\cup t_2$ by sliding a leaf of $t_1$ accross a leaf of $t_2$,
  see Figure \ref{fig:slide}.
  \begin{figure}[!h]
  \includegraphics{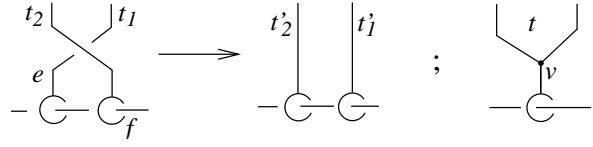}
  \caption{Sliding a leaf.    }\label{fig:slide}
 \end{figure}
  Then
  $$ \mathbf{1}_{t_1\cup t_2} \stackrel{C_{k_1+k_2+1}}{\sim} \mathbf{1}_{t'_1\cup t'_2}\cdot \mathbf{1}_t, $$
  where $t$ is a $C_{k_1+k_2}$-graph obtained from $t_1\cup t_2$ by inserting a vertex $v$
  in the edge $e$ of $t$ and connecting $v$ to the edge incident to $f$
  as shown in Figure \ref{fig:slide}.
\end{lemma}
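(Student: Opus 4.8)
The plan is to prove this by a purely local computation in clasper calculus, following the techniques of Habiro \cite[\S 4]{H}.

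First I would localize the statement. Sliding a leaf $f_1$ of $t_1$ across a leaf $f_2$ of $t_2$ alters the clasper only inside a small ball $B$ containing these two leaves together with the short portions of their incident edges; outside $B$ the configurations $t_1\cup t_2$ and $t'_1\cup t'_2$ coincide. It therefore suffices to establish the asserted identity inside $B$, treating the two edges that enter $B$ as fixed inputs and reducing the whole problem to the interference of the two surgeries encoded by the interacting leaves. Everything below takes place in $B$, up to isotopy and clasper moves supported in $B$.

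The main step is then the commutator computation. Here I would use the interpretation of a leaf as a meridian of the framed-link component it produces, together with the zip construction. Surgery along $t_2$ replaces a neighbourhood of $f_2$ by a piece of framed link $\gamma(t_2)$, and sliding $f_1$ across $f_2$ amounts to pushing the disk bounding $f_1$ through this surgery locus. As in Habiro's elementary moves, the two positions of $f_1$ differ by a commutator, and this commutator is realized by a single connecting clasper: one inserts a new node into the edge incident to one leaf and grafts onto it the edge of the other, exactly as in Figure \ref{fig:slide}. This yields the graph clasper $t$, and surgery along it accounts precisely for the discrepancy, so that $\mathbf{1}_{t_1\cup t_2}$ and $\mathbf{1}_{t'_1\cup t'_2}\cdot\mathbf{1}_t$ agree up to the auxiliary claspers created during the manipulation.

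It then remains to carry out the degree bookkeeping. Passing from the disconnected $t_1\cup t_2$ to $t$ introduces one new node and deletes one leaf, so the total number of nodes and leaves is unchanged; since $t$ is now connected, it is a single graph clasper of degree $k_1+k_2$, as claimed. Finally I would verify that every auxiliary clasper produced above is connected, involves edges coming from both $t_1$ and $t_2$, and carries at least one node beyond those of $t$; each such clasper therefore has degree at least $k_1+k_2+1$. By the description of $C_k$-equivalence via surgery along $C_k$-trees, and the fact that this relation becomes finer as the index grows, all these terms may be discarded, leaving exactly the stated $C_{k_1+k_2+1}$-equivalence.

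The main obstacle is this last error control. One must check that repeatedly applying the zip construction and the elementary clasper moves never produces a correction term of degree $\le k_1+k_2$ other than $t$ itself. Concretely, one has to track that each leftover clasper genuinely joins a sub-clasper of $t_1$ to a sub-clasper of $t_2$, so that its degree is at least $k_1+k_2$, and that it carries a strictly positive number of extra nodes, so that its degree exceeds $k_1+k_2$. This step is routine within Habiro's calculus but is precisely where the degree hypothesis is used, and it is the only genuinely delicate point of the argument.
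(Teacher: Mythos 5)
Your proposal follows essentially the same route as the paper, which in fact prints no proof of this lemma at all: the paper states that the proofs in this subsection ``are omitted, since they involve the same techniques as in \cite[\S 4]{H}'', and your scheme --- localize near the two leaves, resolve the slide via the zip construction and Habiro's elementary moves into a single connected correction clasper, then discard all auxiliary claspers by degree counting --- is precisely that standard argument. Your bookkeeping is also the right one: deleting the slid leaf and adding one new node leaves the total number of nodes and leaves equal to $2(k_1+k_2)$, so the connected graph $t$ has degree exactly $k_1+k_2$, while every other clasper created in the process joins parts of both $t_1$ and $t_2$ with at least one extra node and hence can be absorbed into the $C_{k_1+k_2+1}$-equivalence.
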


\begin{lemma} \label{lem:cc}
  Let $t_1\cup t_2$ be a disjoint union of a $C_{k_1}$-graph and a $C_{k_2}$-graph for $\mathbf{1}$.
  Let $t'_1\cup t'_2$ be obtained from $t_1\cup t_2$ by changing a crossing of an edge of $t_1$ with
  an edge of $t_2$.
  Then
  $$ \mathbf{1}_{t_1\cup t_2} \stackrel{C_{k_1+k_2+2}}{\sim} \mathbf{1}_{t'_1\cup t'_2}\cdot \mathbf{1}_t, $$
  where $t$ is a $C_{k_1+k_2+1}$-graph.
\end{lemma}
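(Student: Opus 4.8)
The plan is to reduce the statement to a local identity and then to run the degree bookkeeping typical of the clasper calculus of \cite[\S 4]{H}. The two string links $\mathbf{1}_{t_1\cup t_2}$ and $\mathbf{1}_{t_1'\cup t_2'}$ are obtained by surgery along claspers that agree outside a ball $B$, inside which $t_1\cup t_2$ meets $B$ in two sub-bands $b_1\subset e_1$ and $b_2\subset e_2$ of the crossed edges of $t_1$ and $t_2$. It therefore suffices to establish, inside $B$ and up to $C_{k_1+k_2+2}$-equivalence, that changing the crossing of $b_1$ and $b_2$ amounts to multiplying by $\mathbf{1}_t$ for a suitable graph clasper $t$.

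First I would use the elementary principle of the calculus that a crossing change between two bands is compensated by a clasp: the two crossing states of $b_1$ and $b_2$ differ by a $C_1$-tree $h$ whose two leaves grasp $b_1$ and $b_2$ respectively. The leaf of $h$ encircling $b_1$ runs around the edge $e_1$, and by the standard leaf-conversion (zip) move of \cite[\S 4]{H} it can be absorbed into $e_1$, becoming a trivalent node $v_1$ inserted on $e_1$; the same move applied to the other leaf yields a node $v_2$ on $e_2$, while the edge of $h$ joining the two leaves becomes the edge connecting $v_1$ to $v_2$. The outcome is exactly the connected graph clasper $t$ formed from $t_1\cup t_2$ by inserting the two nodes $v_1,v_2$ and joining them. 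Its degree is read off immediately: the disjoint union $t_1\cup t_2$ has $2k_1+2k_2$ leaves and nodes, and the two new nodes raise this to $2(k_1+k_2+1)$, so $t$ is a $C_{k_1+k_2+1}$-graph, as claimed. This also accounts for the difference with Lemma \ref{lem:slide}, where a leaf is \emph{consumed} so that the degree only reaches $k_1+k_2$; here two fresh nodes are created on the two edges instead.

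The main obstacle is the control of the error terms. Each leaf-conversion move, as well as each step in which the leaves of $h$ are dragged along $e_1$ and $e_2$ past the remaining nodes and leaves of $t_1$ and $t_2$, produces further correction claspers, and I must check that every one of them has degree at least $k_1+k_2+2$, so that they are $C_{k_1+k_2+2}$-trivial and can be discarded. This is precisely the iterated degree estimate carried out in \cite[\S 4]{H} (and already invoked in Lemma \ref{lem:slide}), and it is where the exponent $k_1+k_2+2$ comes from: because the surviving correction clasper $t$ grasps two edges rather than reusing two existing leaves, it, and with it every descendant produced along the way, is shifted up by one unit in degree compared with the leaf-slide situation.
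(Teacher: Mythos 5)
Your proposal is correct and takes exactly the route the paper intends: the paper omits the proof of Lemma \ref{lem:cc}, referring to the techniques of \cite[\S 4]{H}, and your argument---realizing the crossing change by a basic clasper whose leaves are meridians of the two edges, trading those meridian leaves for a pair of nodes that join $t'_1$ and $t'_2$ into a connected graph of degree $k_1+k_2+1$, and discarding all correction terms of degree at least $k_1+k_2+2$---is precisely that standard argument, including the correct degree count explaining the shift by one relative to Lemma \ref{lem:slide}. The only slight imprecision is in where the error terms arise: the meridian-leaf-to-node conversion is an exact clasper move, and the degree $\geq k_1+k_2+2$ corrections appear only afterwards, when the simultaneous surgery along $(t'_1\cup t'_2)\cup t$ is separated into the product $\mathbf{1}_{t'_1\cup t'_2}\cdot \mathbf{1}_t$ via leaf slides and edge crossings.
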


\begin{remark}\label{rem:commute}
  By combining the two previous lemmas, we have the following.
  Let $L_1$ (resp. $L_2$) be a $C_{k_1}$-trivial (resp. $C_{k_2}$-trivial) $n$-string link, for some $n\ge 1$.
  Then $L_1\cdot L_2$ is $C_{k_1+k_2}$-equivalent to $L_2\cdot L_1$.
\end{remark}

\begin{lemma}\label{lem:split}
Let $g$ be a $C_k$-graph for $\1_n$. Let $f_1$ and $f_2$ be two disks obtained by splitting a leaf $f$ of $g$ 
along an arc $\alpha$ as shown in Figure \ref{split}.
 \begin{figure}[!h]
  \includegraphics{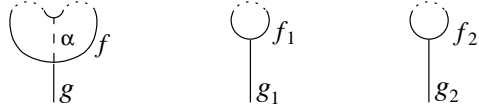}
  \caption{The $3$ claspers are identical outside a small ball, where they are as depicted. }\label{split}
 \end{figure}
 Then, 
 $$ (\1_n)_g \stackrel{C_{k+1}}{\sim} (\1_n)_{g_1}\cdot (\1_n)_{g_2}, $$ 
 where $g_i$ denotes the $C_k$-graph for $\1_n$ obtained from $g$ by replacing $f$ by $f_i$ ($i=1,2$), see Figure \ref{split}.  
\end{lemma}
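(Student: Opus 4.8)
The plan is to reduce the statement to a purely local computation inside the small ball of Figure~\ref{split}, where $g$, $g_1$ and $g_2$ differ, and then to apply the elementary clasper manipulations of Habiro \cite[\S 4]{H}. Since surgery along a graph clasper is defined through the associated framed link $\gamma(g)$ in a regular neighbourhood of $g$, and since the three claspers coincide outside the ball, it suffices to analyse how the framed link changes when the leaf $f$ is cut open along the arc $\alpha$ into $f_1$ and $f_2$. First I would record that the boundary loop $\partial f$ of the original leaf, read as an element of the fundamental group of the complement of the strands meeting $f$, factors as the concatenation of the boundary loops $\partial f_1$ and $\partial f_2$ of the two pieces, up to a correction supported by their commutator; this is the geometric reason one should expect a product $(\1_n)_{g_1}\cdot (\1_n)_{g_2}$ together with a higher order term.

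The heart of the argument is the zip construction of \cite{H}: grasping the concatenated loop with a single leaf $f$ can be traded, up to isotopy and surgery along an auxiliary clasper, for grasping $\partial f_1$ and $\partial f_2$ with two separate leaves carrying copies of the edge $e$ that was incident to $f$. This produces $(\1_n)_{g_1}\cdot (\1_n)_{g_2}$ as the leading term, since each $g_i$ is obtained from $g$ by replacing $f$ with $f_i$ and is unchanged elsewhere. The auxiliary clasper arising from the zip is built from the body of $g$ by inserting a single new node together with a new leaf grasping the commutator of the two families of strands. By the degree convention (half the number of nodes and leaves), adjoining one node and one leaf raises the degree by exactly one, so this correction is a $C_{k+1}$-graph, and every further discrepancy produced by the construction carries at least as many extra vertices and is therefore likewise of degree at least $k+1$. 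Hence all corrections are invisible modulo $C_{k+1}$-equivalence. I note that the ordering ambiguity between $(\1_n)_{g_1}\cdot (\1_n)_{g_2}$ and $(\1_n)_{g_2}\cdot (\1_n)_{g_1}$ is harmless: both $(\1_n)_{g_i}$ are $C_k$-trivial, so by Remark~\ref{rem:commute} the two orders agree up to $C_{2k}$-equivalence, and $2k\ge k+1$ for all $k\ge 1$.

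The step I expect to be the main obstacle is the degree bookkeeping in the zip construction: one must verify that the very first correction term genuinely appears in degree $k+1$, and that no lower-degree discrepancy is created when the edge $e$ is rerouted from $f$ to $f_1$ and $f_2$ and when the pushed-off parallel copies needed for the framing are accounted for. This is exactly the type of calculation carried out in \cite[\S 4]{H}, and it runs parallel to the vertex-insertion mechanism already used in Lemma~\ref{lem:slide}; once it is in place, the asserted $C_{k+1}$-equivalence follows.
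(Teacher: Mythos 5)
Your proposal is correct and takes essentially the same approach as the paper: the paper gives no written proof of Lemma~\ref{lem:split}, deferring explicitly to the techniques of \cite[\S 4]{H}, and your argument (localize to the ball, apply Habiro's leaf-splitting move and zip construction, and check that every correction clasper carries extra vertices and hence has degree at least $k+1$, so it is invisible modulo $C_{k+1}$-equivalence) is precisely that standard clasper calculus. Your handling of the ordering of the two factors via Remark~\ref{rem:commute} (both $(\1_n)_{g_i}$ are $C_k$-trivial, hence commute modulo $C_{2k}$, and $2k\ge k+1$) is also sound.
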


\begin{lemma}\label{lem:twist}
  Let $t$ be a $C_k$-graph for $\mathbf{1}$, and let $t'$ be a $C_k$-graph obtained from $t$
  by adding a half-twist on an edge. Then
  $\mathbf{1}_t\cdot \mathbf{1}_{t'} \stackrel{C_{k+1}}{\sim} \mathbf{1}$.
\end{lemma}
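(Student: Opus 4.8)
The plan is to interpret the product $\mathbf{1}_t\cdot\mathbf{1}_{t'}$ as surgery along the disjoint union $t\cup t'$, with $t$ sitting in the lower part $D^2\times[0,\tfrac12]$ and $t'$ in the upper part $D^2\times[\tfrac12,1]$, and then to deform this configuration until $t$ and $t'$ become two parallel copies of the same $C_k$-graph differing only by the extra half-twist on one edge. Once the two claspers are parallel, surgery along their union should be \emph{isotopic} to the identity by an elementary clasper move, so that $\mathbf{1}_{t\cup t'}$ is the trivial string link; the entire content of the lemma then lies in checking that reaching this parallel position only costs claspers of degree $\ge k+1$, which are $C_{k+1}$-trivial.

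First I would bring the leaves of $t'$ down alongside the corresponding leaves of $t$ and push the body of $t'$ into a bicollar of the body of $t$, keeping everything in the complement of $\mathbf{1}$. This is an ambient isotopy except at finitely many \emph{events}: when a leaf of $t'$ must pass through a leaf of $t$, or when an edge of $t'$ must cross an edge or leaf of $t$. Each such event is governed by Lemma \ref{lem:slide} or Lemma \ref{lem:cc}, applied with $k_1=k_2=k$: a leaf slide replaces the configuration by the desired one times $\mathbf{1}_{t_\ast}$ with $t_\ast$ a $C_{2k}$-graph, while an edge crossing change costs a $C_{2k+1}$-graph, and in both cases all the claspers appearing have degree at least $2k$. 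Since $2k\ge k+1$ for every $k\ge 1$, all of these correction terms are $C_{k+1}$-trivial and may be discarded (using also Remark \ref{rem:commute} to reorder factors freely modulo $C_{k+1}$). Hence, modulo $C_{k+1}$, we have reduced to the case where $t$ and $t'$ are genuine parallel copies.

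The remaining, and main, point is the base cancellation: surgery along a $C_k$-graph together with a parallel copy carrying a single extra half-twist on one edge is isotopic to the trivial surgery. This is the higher-degree analogue of the obvious $C_1$ relation, in which a basic clasper and its half-twisted parallel copy realize a positive and a negative clasp on the same pair of strands and cancel by isotopy; I would prove the general case by localizing near the twisted edge, where the two parallel edges cobound a band that lets one slide a leaf onto its parallel partner, the opposite half-twists making the associated framed-surgery components cancel. The hard part will be carrying out this local cancellation cleanly in the presence of the rest of the tree --- verifying that the twisted and untwisted parallel edges can indeed be isotoped together without disturbing the other leaves and nodes --- which is precisely the kind of elementary clasper manipulation developed in \cite[\S4]{H}. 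Granting it, the two steps combine to give $\mathbf{1}_t\cdot\mathbf{1}_{t'}\stackrel{C_{k+1}}{\sim}\mathbf{1}$.
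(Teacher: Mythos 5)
Your proposal is correct and takes essentially the same approach as the paper, which omits the proof of this lemma altogether and refers to the techniques of \cite[\S 4]{H}: your two steps --- bringing $t'$ into parallel position with $t$ at the cost of corrections of degree $\ge 2k \ge k+1$ via Lemmas \ref{lem:slide} and \ref{lem:cc}, then cancelling a clasper against its adjacent half-twisted parallel copy by ambient isotopy --- are exactly Habiro's argument. The base cancellation you grant is indeed a result proved in \cite[\S 4]{H} (surgery along a clasper together with a half-twisted parallel copy preserves the link up to isotopy), so deferring it there is precisely what the paper itself does.
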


Claspers also satisfy relations analogous to the AS, IHX and STU relations for Jacobi diagrams \cite{BNv}.
\begin{lemma}\label{asihxstu}

(AS). Let $t$ and $t'$ be two $C_k$-graphs for $\mathbf{1}$ which differ only in a small ball
  as depicted in Figure \ref{asihxstu_fig}. Then
  $\mathbf{1}_{t}\cdot \mathbf{1}_{t'} \stackrel{C_{k+1}}{\sim}\mathbf{1}$.
  \begin{figure}[!h]
   \includegraphics{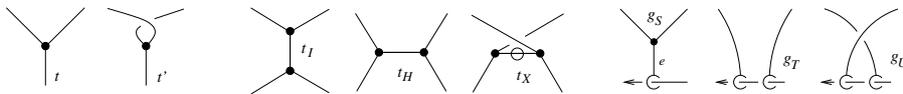}
   \caption{The AS, IHX and STU relations.  }\label{asihxstu_fig}
  \end{figure}

(IHX). Let $t_I$, $t_H$ and $t_X$ be three $C_k$-graphs for $\mathbf{1}$ which differ only in a small ball
  as depicted in Figure \ref{asihxstu_fig}. Then
  $\mathbf{1}_{t_I} \stackrel{C_{k+1}}{\sim} \mathbf{1}_{t_H}\cdot \mathbf{1}_{t_X}$.

(STU). Let $g_S$, $g_T$ and $g_U$ be three $C_k$-graphs for $\mathbf{1}$ which differ only in a small ball
  as depicted in Figure \ref{asihxstu_fig}. Then
  $\mathbf{1}_{g_S}\cdot \mathbf{1}_{g_T} \stackrel{C_{k+1}}{\sim} \mathbf{1}_{g_U}$.
\end{lemma}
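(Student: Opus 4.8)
The plan is to treat all three relations uniformly as local statements: in each case the graphs involved agree outside a small ball, so the difference between the two sides of the claimed $C_{k+1}$-equivalence is supported in that ball and is controlled by the calculus already established in Lemmas \ref{lem:slide}--\ref{lem:twist}. I would prove (AS) first, then (STU) by a direct geometric manipulation, and finally deduce (IHX) from (STU) together with (AS), exactly as the IHX relation for Jacobi diagrams is deduced from the STU relation.

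For (AS), the two $C_k$-graphs $t$ and $t'$ differ by exchanging two of the three edges meeting at a node. I would realize this exchange by an ambient isotopy rotating the node disk, at the cost of introducing a half-twist on an edge incident to that node. By Lemma \ref{lem:twist}, inserting such a half-twist turns a $C_k$-graph into the inverse of its surgery modulo $C_{k+1}$; hence $\mathbf{1}_{t}\cdot\mathbf{1}_{t'}$ is $C_{k+1}$-equivalent to $\mathbf{1}$. The only thing to verify is the bookkeeping of the half-twists, which is routine.

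For (STU), the graphs $g_S$, $g_T$ and $g_U$ differ near a leaf lying close to an edge of the clasper, with $g_U$ carrying the extra node and $g_S,g_T$ the two resolutions. I would first apply Lemma \ref{lem:split} to split the relevant leaf into two parallel leaves, and then use Lemma \ref{lem:slide} to slide one of them into place; the slide produces precisely a node-type $C_k$-graph together with a correction of degree at least $k+1$. Matching the two resolutions with the split leaves, and absorbing the correction terms into the $C_{k+1}$-equivalence, then yields $\mathbf{1}_{g_S}\cdot\mathbf{1}_{g_T}\stackrel{C_{k+1}}{\sim}\mathbf{1}_{g_U}$.

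Finally, for (IHX), I would avoid a direct three-term geometric argument and instead imitate the standard derivation of IHX from STU. Since every leaf of a tree clasper is a univalent vertex meeting the string link, (STU) may be applied in a neighbourhood of a leaf adjacent to the distinguished internal edge of the $I$-, $H$- and $X$-configurations. Expanding each configuration by (STU) and comparing the resulting products, using (AS) to reconcile the cyclic orientations of the nodes that appear, collapses the difference to a single term of degree at least $k+1$, giving $\mathbf{1}_{t_I}\stackrel{C_{k+1}}{\sim}\mathbf{1}_{t_H}\cdot\mathbf{1}_{t_X}$. I expect this last step to be the main obstacle: the difficulty is to organize the (STU) expansions so that all but the three desired terms cancel, while keeping careful track of the $C_{k+1}$ corrections produced at each application. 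The (AS) and (STU) relations, by contrast, are comparatively direct local manipulations.
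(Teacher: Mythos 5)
The paper itself gives no proof of Lemma~\ref{asihxstu}: Section~2.2 states that ``proofs are omitted, since they involve the same techniques as in \cite[\S 4]{H}'', so your attempt has to be measured against the standard clasper-calculus arguments (Habiro's zip construction, and the clover calculus of Garoufalidis--Goussarov--Polyak) rather than against a written argument in this paper. Your (AS) step is essentially that standard argument and is sound: exchanging two edges at a node is realized by flipping the node disk, at the cost of a half-twist on \emph{each of the three} incident edges (not just one); since the number of half-twists is odd, iterating Lemma~\ref{lem:twist} still gives $\mathbf{1}_{t'}\stackrel{C_{k+1}}{\sim}\mathbf{1}_t^{-1}$, which is the claim. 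Your plan to deduce (IHX) from (STU) and (AS) in the Bar-Natan style is also viable in principle, and is a genuinely different route from the direct geometric proofs of IHX in the clasper literature: the formal derivation can be transplanted because all surgeries along degree-$k$ graphs commute modulo $C_{k+1}$ (Remark~\ref{rem:commute}, as $2k\ge k+1$), so the diagrammatic cancellations make sense multiplicatively; but it is entirely contingent on having (STU) in hand.

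The genuine gap is in your (STU) step. Lemma~\ref{lem:slide} is stated for a \emph{disjoint union} $t_1\cup t_2$ of a $C_{k_1}$-graph and a $C_{k_2}$-graph, and its correction term is a graph of degree $k_1+k_2$, modulo $C_{k_1+k_2+1}$. In the STU configuration the two leaves being transposed belong to a single \emph{connected} $C_k$-graph, and there is no decomposition into disjoint pieces with $k_1+k_2=k$; so the cited lemma never applies, and your assertion that ``the slide produces precisely a node-type $C_k$-graph together with a correction of degree at least $k+1$'' is not a consequence of Lemma~\ref{lem:slide} --- it \emph{is} the STU relation, so the argument as written is circular. Lemma~\ref{lem:split} does not repair this: splitting a leaf of a connected degree-$k$ graph produces two graphs that are each still connected and of degree $k$, so it never puts you in the disjoint situation that Lemma~\ref{lem:slide} requires. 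Two honest fixes exist: either rerun the local proof of the slide move (the zip construction/commutator identity, exactly the technique of \cite[\S 4]{H} that this paper points to), observing that it never uses disjointness of the two graphs; or first disconnect the graph using Habiro's move~2 --- cut an edge separating the two leaves and insert a Hopf-linked pair of leaves, a move not among the lemmas stated in this paper --- after which Lemma~\ref{lem:slide} does apply to the resulting disjoint union, and undoing the cut on all three terms recovers the stated relation. Without one of these inputs, (STU), and hence also your (IHX), remains unproved.
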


In the rest of the paper, we will simply refer to Lemma \ref{asihxstu} as the AS, IHX and STU relations.

By the IHX and STU relations, one can easily check the following.
\begin{lemma}\label{fork}
Let $t$ be a $C_k$-graph for $\mathbf{1}$.

(1).~Suppose that there exists a $3$-ball which intersects $t$ as on the left-hand side of
  Figure \ref{fig:fork}.
  Then
  $\mathbf{1}_t\stackrel{C_{k+1}}{\sim} \mathbf{1}_c$,
  where $c$ is a $C_k$-graph as shown in the figure. 
  \begin{figure}[!h]
   \includegraphics{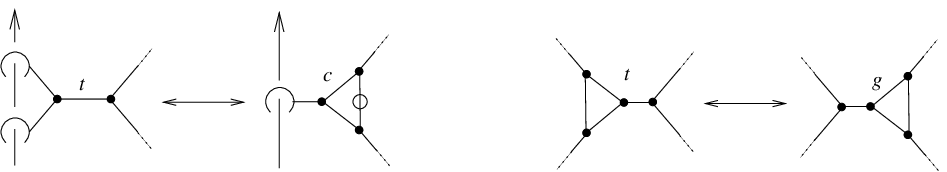}
   \caption{}\label{fig:fork}
  \end{figure}

(2).~Suppose that there exists a $3$-ball which intersects $t$ as on the right-hand side of
  Figure \ref{fig:fork}.
  Then
  $\mathbf{1}_t\stackrel{C_{k+1}}{\sim} \mathbf{1}_g$,
  where $g$ is a $C_k$-graph as shown in the figure. 
\end{lemma}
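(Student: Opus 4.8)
The plan is to realize each local picture in Figure \ref{fig:fork} as (part of) one of the three fundamental relations of Lemma \ref{asihxstu}, and then to discard the resulting spurious graph on the grounds that surgery along it is trivial. For part (1), I would view the three edges meeting inside the displayed $3$-ball as the relevant portion of an IHX triple, with $t$ playing the role of $t_I$. The IHX relation then yields $\mathbf{1}_t \stackrel{C_{k+1}}{\sim} \mathbf{1}_c\cdot \mathbf{1}_{t_X}$, where $c$ is the asserted $C_k$-graph and $t_X$ is the third graph of the triple. First I would check, directly from the figure, that $t_X$ possesses a leaf bounding an embedded disk whose interior meets neither $\mathbf{1}$ nor the remaining edges of the clasper; since surgery along a clasper carrying such a \emph{trivial leaf} is an isotopy (a basic property of clasper surgery, cf. \cite{H}), we get $\mathbf{1}_{t_X}=\mathbf{1}$ and hence $\mathbf{1}_t \stackrel{C_{k+1}}{\sim}\mathbf{1}_c$.

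For part (2), the configuration on the right-hand side of Figure \ref{fig:fork} is instead one term of an STU triple. Applying the STU relation of Lemma \ref{asihxstu} rewrites $\mathbf{1}_t$ as $\mathbf{1}_g\cdot \mathbf{1}_{g'}$ up to $C_{k+1}$-equivalence, where $g$ is the claimed $C_k$-graph and $g'$ is the remaining STU term. As in part (1), I would identify in $g'$ a leaf bounding a disk disjoint from $\mathbf{1}$ and from the other edges, so that $\mathbf{1}_{g'}=\mathbf{1}$ and therefore $\mathbf{1}_t \stackrel{C_{k+1}}{\sim}\mathbf{1}_g$. In both parts, the drawing conventions of Convention \ref{conv_star} must be respected: any half-twists created when normalizing the picture into the standard IHX or STU position are absorbed by Lemma \ref{lem:twist}, which alters the clasper only up to $C_{k+1}$-equivalence, and a leaf that is split or recombined in the process is handled by Lemma \ref{lem:split}.

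The main obstacle is bookkeeping rather than conceptual. One must verify, case by case from the figure, that exactly one of the two graphs produced by the relation is the desired clasper while the other becomes trivial after surgery; this reduces to confirming that the spurious leaf genuinely bounds an embedded disk avoiding both $\mathbf{1}$ and every other edge, and that each half-twist introduced during normalization carries the correct sign so that the \textbf{AS} and twist relations apply as stated. Given Lemmas \ref{lem:twist} and \ref{lem:split}, both verifications are routine, which is why the statement can be checked directly from the IHX and STU relations.
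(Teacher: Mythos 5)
Your proposal follows essentially the same route as the paper, which offers no detailed argument beyond the remark that the lemma "can easily be checked by the IHX and STU relations": applying IHX to part (1) and STU to part (2), with the spurious third graph of each relation discarded because it acquires a leaf bounding a disk disjoint from $\mathbf{1}$ and from the rest of the clasper (so that surgery along it is trivial), and with Lemmas \ref{lem:twist} and \ref{lem:split} absorbing the half-twists and leaf manipulations arising in the normalization, is exactly that check. The only caveat is the figure-dependent bookkeeping you yourself flag, which is precisely where the paper's "easily" resides.
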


\subsection{Commutativity lemmas}
In this section, we collect a couple extra technical, purely algebraic lemmas.

\begin{lemma}\label{lem:commute1}
Let $l$ and $k$ be positive integers $(l<k)$.
Let $G$ be a $C_l$-trivial string link and let $G'$ be the inverse of $G$ in $\mathcal{SL}(n)/C_{l+1}$.
If $G$ commutes with any $C_{l+1}$-trivial string link in $\mathcal{SL}(n)/C_{k}$, 
then $G$ and $G'$ commute in $\mathcal{SL}(n)/C_{k}$.
\end{lemma}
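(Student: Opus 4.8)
The plan is to reduce the statement to a one-line group-theoretic cancellation, by applying the hypothesis not to $G'$ directly but to a single, carefully chosen $C_{l+1}$-trivial string link, namely the product $G\cdot G'$ itself. The point is that this product is manifestly $C_{l+1}$-trivial, so it is exactly the kind of element the hypothesis controls.

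First I would record the elementary observation that $G\cdot G'$ is $C_{l+1}$-trivial. Indeed, by the very definition of $G'$ as an inverse of $G$ in the group $\mathcal{SL}(n)/C_{l+1}$, we have $G\cdot G' \stackrel{C_{l+1}}{\sim} \1$, and this holds for any choice of representative string link $G'$ of the inverse class. Next I would invoke the hypothesis applied to this specific $C_{l+1}$-trivial string link $G\cdot G'$, which gives
\[ G\cdot (G\cdot G') \stackrel{C_k}{\sim} (G\cdot G')\cdot G \]
in $\mathcal{SL}(n)/C_k$.

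Finally I would rewrite both sides and cancel. The left-hand side equals $G\cdot G\cdot G'$ and the right-hand side equals $G\cdot G'\cdot G$, so $G\cdot G\cdot G' \stackrel{C_k}{\sim} G\cdot G'\cdot G$. Because $\mathcal{SL}(n)/C_k$ is a group \cite{H}, the element $G$ is invertible there, and left-multiplying by its inverse yields $G\cdot G' \stackrel{C_k}{\sim} G'\cdot G$, which is precisely the desired commutation.

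The argument is purely formal, so there is no serious geometric obstacle; the only points requiring care are the two facts that make it legitimate. The first is that the cancellation step genuinely uses the group structure of $\mathcal{SL}(n)/C_k$ — in the monoid $\mathcal{SL}(n)$ itself one could not cancel $G$. The second, and the real conceptual step, is the recognition that the right element to feed into the hypothesis is the product $G\cdot G'$ rather than $G'$ alone; this is what makes the hypothesis directly applicable and simultaneously shows the conclusion is independent of the chosen representative of $G'$. The inequality $l<k$ serves only to place the discussion in the interesting range where $C_{l+1}$-triviality is strictly weaker than $C_k$-triviality, and plays no further role in the computation.
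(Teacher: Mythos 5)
Your proof is correct and is essentially the paper's own argument: both hinge on the key observation that $G\cdot G'$ is itself $C_{l+1}$-trivial, apply the commutation hypothesis to that element, and then cancel $G$ using the group structure of $\mathcal{SL}(n)/C_k$ (the paper just writes the cancellation explicitly as a chain $G\cdot G' \stackrel{C_{k}}{\sim} G^{-1}\cdot G\cdot (G\cdot G') \stackrel{C_{k}}{\sim} G^{-1}\cdot (G\cdot G')\cdot G \stackrel{C_{k}}{\sim} G'\cdot G$ rather than saying ``left-multiply by the inverse''). No gaps; nothing further is needed.
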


\begin{proof}
Since $G\cdot G'$ is $C_{l+1}$-trivial, $G$ commutes with $G\cdot G'$ in
$\mathcal{SL}(n)/C_{k}$. Hence
\[G\cdot G' \stackrel{C_{k}}{\sim}
G^{-1}\cdot G\cdot (G\cdot G') \stackrel{C_{k}}{\sim} G^{-1}\cdot (G\cdot G')\cdot G
\stackrel{C_{k}}{\sim} G'\cdot G,\]
where $G^{-1}$ denotes the inverse of $G$ in $\mathcal{SL}(n)/C_{k}$.
\end{proof}

Although seemingly very technical, the next lemma turns out to be rather natural 
in the proofs of our main results.
\begin{lemma}\label{lem:commute2}
Let $L$ be an $n$-string links satisfying the following three conditions, 
for some integers $p$, $l$, $m$ and $k$ ($p<k$, $l<m<k$) : 
\begin{itemize}
 \item $L^2$ is $C_p$-equivalent to a central element in $\mathcal{SL}(n)/C_k$,
 \item $L$ commutes with any $C_l$-trivial string link in $\mathcal{SL}(n)/C_m$,
 \item $L$ commutes with any $C_m$-trivial string link in $\mathcal{SL}(n)/C_k$. 
\end{itemize}
Suppose, moreover, that $\mathcal{SL}(n)/C_{k}$ has no $2$-torsion, and that 
$C_l$-trivial and $C_p$-trivial string links all commute in $\mathcal{SL}(n)/C_k$.
Then $L$ commutes with any $C_l$-trivial string link in $\mathcal{SL}(n)/C_{k}$.
\end{lemma}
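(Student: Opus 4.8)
\textbf{Proof proposal for Lemma \ref{lem:commute2}.}

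The plan is to fix a $C_l$-trivial string link $J$ and show that $L$ commutes with $J$ in $\mathcal{SL}(n)/C_k$. The natural strategy is to use the hypothesis that $L^2$ is (up to $C_p$-equivalence) central in order to transfer commutativity from the coarser quotients $\mathcal{SL}(n)/C_m$ and $\mathcal{SL}(n)/C_k$ up to the full relation, exploiting the absence of $2$-torsion to pass from ``$L^2$ commutes with $J$'' to ``$L$ commutes with $J$.'' Concretely, I would introduce the commutator $c = L^{-1} J^{-1} L J$ (inverses taken in $\mathcal{SL}(n)/C_k$) and aim to prove $c \stackrel{C_k}{\sim} \1$ by bounding its $C$-degree from below in stages.

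First I would use the second hypothesis: since $L$ commutes with every $C_l$-trivial string link in $\mathcal{SL}(n)/C_m$, and $J$ is $C_l$-trivial, the commutator $c$ is itself $C_m$-trivial. This is the base case. Next, the idea is to square $L$ and compute $[L^2, J]$ in two ways. On one hand, by the first hypothesis $L^2$ is $C_p$-equivalent to a central element, so $[L^2,J]$ should be controlled modulo the interaction of a $C_p$-trivial correction term with the $C_l$-trivial link $J$; here the extra hypothesis that $C_l$-trivial and $C_p$-trivial string links commute in $\mathcal{SL}(n)/C_k$ is exactly what kills that correction, yielding $[L^2, J] \stackrel{C_k}{\sim} \1$. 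On the other hand, a standard commutator identity expresses $[L^2, J]$ in terms of $[L,J]$: writing things multiplicatively, $[L^2,J] = L^{-1}[L,J]L \cdot [L,J]$, so modulo $C_k$ this reads $[L^2,J] \stackrel{C_k}{\sim} (L^{-1} c L)\cdot c$. The point of knowing $c$ is $C_m$-trivial is now that, by the third hypothesis, $L$ commutes with $c$ in $\mathcal{SL}(n)/C_k$, so $L^{-1} c L \stackrel{C_k}{\sim} c$, giving $[L^2,J] \stackrel{C_k}{\sim} c^2$. Combining the two computations yields $c^2 \stackrel{C_k}{\sim} \1$, i.e. $c$ is $2$-torsion in $\mathcal{SL}(n)/C_k$, and the no-$2$-torsion hypothesis forces $c \stackrel{C_k}{\sim} \1$, which is the desired commutativity.

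I expect the main obstacle to be the bookkeeping in the two-way computation of $[L^2,J]$, specifically making rigorous that the $C_p$-equivalence between $L^2$ and a central element does not obstruct the argument. The subtlety is that centrality of $L^2$ only holds up to $C_p$, so $L^2 = Z \cdot E$ where $Z$ is central in $\mathcal{SL}(n)/C_k$ and $E$ is $C_p$-trivial; then $[L^2, J] = [ZE, J]$ must be reduced, using the centrality of $Z$, to a commutator involving only $E$ and $J$, and here I would invoke the commuting hypothesis on $C_l$- and $C_p$-trivial links together with Remark \ref{rem:commute} to conclude $[L^2,J]\stackrel{C_k}{\sim}\1$. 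Care is needed because $J$ is $C_l$-trivial but the commutator $c=[L,J]$ is only $C_m$-trivial, so the order in which the hypotheses (commutation in $\mathcal{SL}(n)/C_m$ versus in $\mathcal{SL}(n)/C_k$) are applied matters; the inequalities $l<m<k$ and $p<k$ are precisely what make each step legitimate. Once these degree constraints are tracked correctly, the final appeal to Lemma \ref{lem:commute1}-style manipulation and the no-$2$-torsion hypothesis is routine.
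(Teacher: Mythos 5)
Your proposal is correct and follows essentially the same route as the paper's proof: decompose $L^2$ (up to $C_k$) as a central element times a $C_p$-trivial link to get $[L^2,G]\stackrel{C_k}{\sim}\1$, use the commutator identity together with the $C_m$-triviality of $[L,G]$ and the third hypothesis to get $[L^2,G]\stackrel{C_k}{\sim}[L,G]^2$, and conclude by the absence of $2$-torsion. The only differences are cosmetic (your commutator convention versus the paper's $[x,y]=xyx^{-1}y^{-1}$).
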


\begin{remark}\label{rem:trivial}
Observe that a string link commutes with any $C_l$-trivial string link in $\mathcal{SL}(n)/C_k$ ($l<k$) 
if and only if it commutes with $\1_t$ for any $C_s$-tree $t$ with $l\leq s\leq k-1$.
Indeed, let $T$ and $T'$ be two $C_{l}$-equivalent string links. 
Since $T$ is $C_1$-trivial, it is not hard to see, using Lemmas~\ref{lem:slide} and \ref{lem:cc}, that $T'$ is $C_{k}$-equivalent to
a product $T\cdot T_{l}\cdot T_{l+1}\cdots T_{k-1}$
where $T_i$ is a product of string links, each obtained from $\1$ by surgery along a single $C_i$-tree $(l\le i\le k-1)$.
In particular, each $T_i$ is a $C_i$-trivial string link, and $T_{l}\cdot T_{l+1}\cdots T_{k-1}$ is thus a $C_l$-trivial string link.
\end{remark}

\begin{proof}
Let $G$ be a $C_l$-trivial string link. Let us show that $L$ and $G$ commute in
$\mathcal{SL}(n)/C_{k}$.
Since $L^2$ is $C_p$-equivalent to a central element $C$, we have that
$L^2$ is $C_k$-equivalent to $C \cdot H$ for some $C_p$-trivial string link $H$
(by an argument similar to that in Remark \ref{rem:trivial} above).
Thus we have
\[G\cdot L^2\stackrel{C_{k}}{\sim} G\cdot C\cdot H
\stackrel{C_{k}}{\sim} C\cdot H\cdot G
\stackrel{C_{k}}{\sim} L^2\cdot G, \]
where the second equivalence follows from the assumption 
that $C_l$-trivial and $C_p$-trivial string links commute in $\mathcal{SL}(n)/C_k$).  
Hence
\[\1_n \stackrel{C_{k}}{\sim} [L^2,G]
\stackrel{C_{k}}{\sim} L[L,G]L^{-1}[L,G],\]
where $[x,y]$ denotes the commutator $xyx^{-1}y^{-1}$ of $x$ and $y$.
Since $L$ and $G$ commute in $\mathcal{SL}(n)/C_{m}$ (by assumption on $L$), 
the commutator $[L,G]$ is $C_{m}$-trivial.
Thus $L$ and $[L,G]$ commute in $\mathcal{SL}(n)/C_{k}$, and  we have
$\1_n \stackrel{C_{k}}{\sim} [L,G]^2$.
Since $\mathcal{SL}_{k-1}(n)/C_{k}$ has no 2-torsion,
we have that $[L,G]$ is $C_k$-trivial, and hence $L$ and $G$ commute in
$\mathcal{SL}(n)/C_{k}$.
\end{proof}
\section{The group of $C_k$-equivalence classes of $2$-string links. } \label{sec:proof}
In this section, we prove Theorems \ref{abeliansl} and \ref{prop:c7}.
\subsection{Abelian cases: $k\le 6$}\label{sec:abelian}
Since the $C_k$-equivalence implies the $C_m$-equivalence if $k>m$, it is sufficient to show that $\mathcal{SL}(2)/C_6$ is abelian.  
This is done in Proposition~\ref{lem:abelian6}, by first providing a set of generators for this group 
and then by showing that any two of these generators commute in $\mathcal{SL}(2)/C_6$.  
(Note that it was shown by Habiro that $\mathcal{SL}(n)/C_k$ is a finitely generated group for any $n$ and $k$ \cite{H}.)

Let the $2$-string links $I$, $Y$, $Y'$, $H$, $X$, $D$, $S^1_\alpha$ and $S^2_\alpha$ ($\alpha\in S_3$)
obtained from $\mathbf{1}_2$ by surgery along the tree clasper $i$, $y$, $y'$, $h$, $x$, $d$, 
$s^1_\alpha$ and $s^2_\alpha$, represented in Figure \ref{fig:tree}, respectively.   
 \begin{figure}[!h]
  \includegraphics{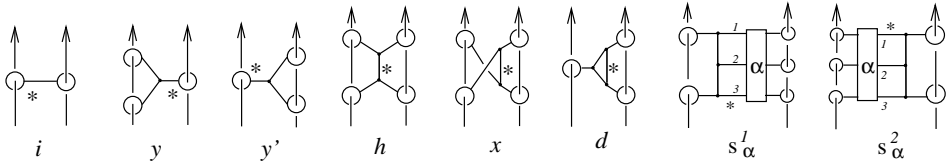}
  \caption{Here, when representing a clasper $c$ with an edge marked by a $\ast$, we implicitly also define the clasper
$c^{\ast}$ obtained from $c$ by inserting a \emph{negative half twist} in the $\ast$-marked edge.
Thus, when introducing the string link $C$ obtained from $\mathbf{1}$ by surgery along $c$, 
we implicitly also introduce the string link $C^\ast$ obtained by surgery along $c^\ast$. }
  \label{fig:tree}
 \end{figure} 

Let $\mathcal{SL}_i(2)$ denote the set of $C_i$-trivial $2$-string links ($i\ge 1$).
It is shown in \cite[\S 4.1 to 4.3.2]{MYftisl} that, for $i\in\{1,2,3,4\}$, the group $\mathcal{SL}_i(2)/C_{i+1}$ has generating 
set $\mathcal{H}_i = \mathcal{H}^m_i\cup \mathcal{H}^l_i$, where\footnote{
In \cite{MYftisl}, the generating set $\mathcal{H}^m_3 = \{H,X\}$ is used instead. 
Here, it will be convenient to use the element $D$ rather than $X$ as generator. 
This is possible since, by the STU relation, we have $D\cdot H\stackrel{C_4}{\sim} X$.}  
$$ \mathcal{H}^m_1 = \{I\}, \quad \mathcal{H}^m_2 = \{Y\}, \quad  \mathcal{H}^m_3 = \{H,D\},  
\quad \mathcal{H}^m_4 = \{\si^1_{id},\si^1_{(12)},\si^1_{(123)},\si^1_{(13)},\si^2_{id}\}, $$
and where $\mathcal{H}^l_i$ is a set of \emph{local} generators. 
Here, we say that an element of $\mathcal{SL}(2)$ is local if there exists a 3-ball whose boundary 
intersects it at only two points, such that an homotopy of this ball to a point produces the 
trivial $2$-string link.
(In other words, local elements consists of a local knot on one strand).  
Clearly local elements are central in $\mathcal{SL}(2)$.)  

Let us also fix a generating set $\mathcal{H}_5$ for $\mathcal{SL}_5(2)/C_{6}$. 

For any $L\in \mathcal{H}_{i}$ ($i\le 5$), we fix an inverse $L^\ast$ in $\mathcal{SL}_i(2)/C_{i+1}$. 
In particular, we use the convention of Figure \ref{fig:tree} 
to pick inverses for elements of $\mathcal{H}_{i}$ ($i\le 4$).  
We call elements of $\mathcal{H}_i$ and their inverses \emph{generators of degree $i$}.

Before stating Proposition~\ref{lem:abelian6}, 
we make a few simple, yet useful observations. 
First, notice that 
\begin{lemma}\label{lem:I}
  Both $I$ and $I^\ast$ are central in $\mathcal{SL}(2)$. 
\end{lemma}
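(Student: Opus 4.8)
The plan is to show that $I$ (and by the same reasoning $I^\ast$) commutes with \emph{every} $2$-string link in $\mathcal{SL}(2)$, not merely up to some $C_k$-equivalence. Recall that $I$ is obtained from $\1_2$ by surgery along the $C_1$-tree $i$, which is a single clasper with two leaves each meeting a distinct strand once; geometrically, surgery along $i$ is just a crossing change between the two strands, so $I$ is the $2$-string link consisting of a single clasp (equivalently, $I$ is the positive generator of the pure braid group on two strands, a full clasp between the two strings). The key structural fact is that $I$ is a \emph{pure braid}, hence it is an invertible element of the monoid $\mathcal{SL}(2)$ with honest two-sided inverse given by the opposite clasp, and the pure braid group $P_2 \cong \mathbf{Z}$ is abelian and central enough for our purposes.

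First I would make precise that $I$ lies in the image of $P_2$ in $\mathcal{SL}(2)$ and identify its inverse. Then, to prove centrality, I would argue geometrically: given any $2$-string link $L$, I want to exhibit an isotopy (rel endpoints) carrying $I \cdot L$ to $L \cdot I$. The cleanest way is to observe that $I$ is supported in a small product sub-cylinder $D^2 \times [0,\tfrac{1}{2}]$ in which the two strands are otherwise trivial, and that the clasp can be slid past $L$ through the collar structure of $D^2 \times I$. Concretely, since $P_2$ is the \emph{center} of the relevant configuration — a single full twist $\Delta^2$ of the two strands is central in $P_2 = \langle \Delta^2 \rangle$, and more to the point a generator of $P_2 \cong \mathbf Z$ corresponds to looping one string fully around the other — the braid $I$ can be realized by an ambient isotopy of $D^2$ (a rotation) swept along $I$, and this isotopy commutes with stacking any $L$ on top or bottom. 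This is the standard fact that full twists (and generators of $P_2$) are central in string-link composition.

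Alternatively, and perhaps more in keeping with the clasper language of the paper, I would give the argument via leaf-sliding: to compute $I \cdot L$ versus $L \cdot I$ one slides the clasper $i$ (sitting below $L$) up through $L$ to sit above it. Because $i$ is a $C_1$-tree whose two leaves bound disks that can be made disjoint from $L$ after the slide, and because the ambient manifold $D^2 \times I$ retracts so as to allow this vertical motion, the clasper passes through $L$ by an isotopy, yielding $I \cdot L$ isotopic to $L \cdot I$ in $\mathcal{SL}(2)$ on the nose. The same verbatim argument, with the half-twist convention of Figure \ref{fig:tree} inserted, handles $I^\ast$.

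The main obstacle I anticipate is making the sliding/ambient-isotopy argument genuinely rigorous rather than hand-waved, since a generic $2$-string link $L$ can be knotted and linked in complicated ways, and one must be sure the clasp $i$ can be pushed past $L$ without introducing new crossings or getting caught on the strands. The honest resolution is to lean on the fact that $I$ is a pure braid: invertible elements of $\mathcal{SL}(n)$ are exactly the pure braids \cite{HL}, and a single clasp corresponds to a generator $A_{12}$ of $P_2 \cong \mathbf Z$; since $P_2$ is abelian and, crucially, every element of $P_2$ is realized by an isotopy of $D^2 \times I$ fixing the boundary that can be performed in an arbitrarily thin collar, such an element commutes with every string link under stacking. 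I would therefore phrase the final proof around this braid-theoretic fact, which sidesteps any delicate general-position argument about pushing claspers through arbitrary tangles, and simply remark that $I^\ast$ differs from $I^{-1}$ by a $C_2$-trivial correction so that the identical reasoning applies to $I^\ast$.
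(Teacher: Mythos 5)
Correct, and essentially the same approach as the paper: the paper's entire proof is the observation that rotating the two strings of $I\cdot L$ by $360$ degrees about the vertical axis (fixing the endpoints) carries it to $L\cdot I$, which is exactly your rotation-sweep argument for the clasp as the full twist generating $P_2$. One caveat: your closing remark that $I^\ast$ ``differs from $I^{-1}$ by a $C_2$-trivial correction'' would not by itself yield centrality in $\mathcal{SL}(2)$ on the nose (a $C_2$-trivial factor need not be central), but this is harmless since $I^\ast$ is precisely the negative clasp, itself a pure braid, so your argument --- like the paper's ``the case of $I^\ast$ is similar'' --- applies to it verbatim.
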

\begin{proof}
This is shown in Figure \ref{fig:rotate} for the element $I$ (the case of $I^\ast$ is similar).           
  \begin{figure}[!h]
  \includegraphics{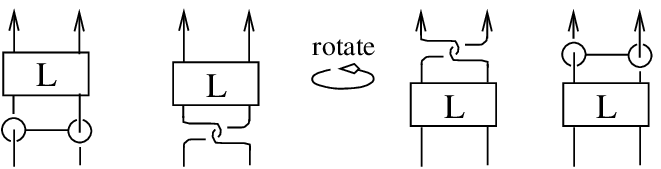}
  \caption{}\label{fig:rotate}
 \end{figure}
Let $L\in \mathcal{SL}(2)$. Then $I\cdot L$ is as represented on the left-hand side of the figure. 
Rotating the two strings by $360$ degrees about the vertical axis (fixing the endpoints) yields an isotopic string link, 
which is precisely $L\cdot I$. 
\end{proof}
Next, we have
\begin{fact}\label{eq:Y}
 $Y$ and $Y^*$ are ambient isotopic to $Y'$ and $(Y')^*$ respectively. 
\end{fact}
This is the string link version of the usual symmetry property of the Whitehead link, 
as illustrated in Figure \ref{fig:whitehead} below.
 \begin{figure}[!h]
  \includegraphics{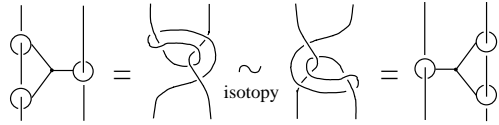}  
  \caption{Symetry property of the Whitehead string link.}    
  \label{fig:whitehead}
 \end{figure}

In the following, however, we will use the term `symmetric' for another property. 
Given an $n$-string link $L$, we denote by $\overline{L}$ its image under orientation-reversal of all its strands. 
The string link $L$ is called \emph{symmetric} if it is isotopic to $\overline{L}$. 
For example, the string links $I$, $I^*$, $Y$, $Y^*$, $Y'$, $Y'^*$, $H$, $H^*$, $D$ and $D^*$ are all symmetric. 
Notice that this is nicely reflected in the symmetry of the clasper defining these string links. 

We can now prove the first part of Theorem \ref{abeliansl}.  

\begin{proposition}\label{lem:abelian6}
The group $\mathcal{SL}(2)/C_6$ is abelian.  
\end{proposition}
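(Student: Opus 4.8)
The plan is to prove that $\mathcal{SL}(2)/C_6$ is abelian by the standard generator-pair strategy announced in the text: exhibit a finite generating set, then show that any two generators commute modulo $C_6$. By the filtration structure, $\mathcal{SL}(2)/C_6$ is built from the successive quotients $\mathcal{SL}_i(2)/C_{i+1}$ for $i = 1, \dots, 5$, so a generating set is obtained by collecting the generators $\mathcal{H}_1 \cup \mathcal{H}_2 \cup \dots \cup \mathcal{H}_5$ (the sets $\mathcal{H}^m_i$ together with the local generators $\mathcal{H}^l_i$) listed just before the statement. Local generators are already central in $\mathcal{SL}(2)$, so they commute with everything automatically, and $I$, $I^\ast$ are central by Lemma~\ref{lem:I}. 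It therefore suffices to check commutativity among the non-local, degree-$\geq 2$ generators, i.e.\ among $Y, H, D, \sigma^1_\alpha, \sigma^2_{id}$ and their chosen inverses, together with the fixed generators of $\mathcal{H}_5$.

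The key organizing principle is Remark~\ref{rem:commute}: if $A$ is $C_{k_1}$-trivial and $B$ is $C_{k_2}$-trivial, then $A \cdot B \stackrel{C_{k_1+k_2}}{\sim} B \cdot A$. First I would use this to dispose of all pairs whose degrees are large enough: a generator of degree $i$ and one of degree $j$ automatically commute mod $C_{i+j}$, hence mod $C_6$ as soon as $i + j \geq 6$. This immediately handles every pair except those with $i + j \leq 5$, namely the low-degree interactions: $Y$ (degree $2$) against $Y$, against the degree-$3$ generators $H, D$, and against the degree-$4$ generators $\sigma^\bullet_\alpha$; and the degree-$3$ generators $H, D$ against each other. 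So the real content is a short, explicit list of commutators to verify, all living in a single low-degree corner.

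For these remaining pairs I would compute the relevant commutators directly using the clasper calculus of Section~\ref{claspers} — the slide and crossing-change lemmas (Lemmas~\ref{lem:slide}, \ref{lem:cc}), the splitting and twist lemmas (Lemmas~\ref{lem:split}, \ref{lem:twist}), and especially the AS/IHX/STU relations (Lemma~\ref{asihxstu}) and the fork lemma (Lemma~\ref{fork}). The symmetry observations will do genuine work here: the fact (Fact~\ref{eq:Y}) that $Y \cong Y'$ and the symmetry $\overline{L} \cong L$ for $I, Y, H, D$ and their starred variants let one relate a commutator $[A,B]$ to a mirror/orientation-reversed configuration, often showing that the leading-order clasper obstruction to commuting cancels against its own image. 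The commutator $[A,B]$ of a degree-$i$ and a degree-$j$ generator is $C_{i+j}$-trivial by Remark~\ref{rem:commute}, so it is a product of claspers of degree $\geq i+j$; for $i+j = 5$ one must kill a single $C_5$-tree contribution, and for $i+j < 5$ one must control contributions in a narrow degree band, which is exactly where the IHX/STU relations and symmetry conspire.

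The main obstacle I anticipate is the pair $[Y, H]$ (and $[Y,D]$), of total degree $5$: these are precisely the borderline cases where $i + j = 5 < 6$, so Remark~\ref{rem:commute} alone does not close the argument and an honest clasper computation of the order-$5$ correction term is unavoidable. I expect the resolution to come from expressing the correction as a $C_5$-graph, applying IHX/STU to rewrite it as a sum of tree claspers, and then using the symmetry of $Y$ and $H$ (both equal to their orientation-reversals) to pair these terms off so that they cancel modulo $C_6$. The degree-$3$ self-pair $[H,D]$, of total degree $6$, should fall directly to Remark~\ref{rem:commute}, as should every pair involving a degree-$4$ or degree-$5$ generator; thus essentially all the difficulty is concentrated in the degree-$2$-versus-degree-$3$ commutators, and verifying their vanishing mod $C_6$ is the crux of the proof.
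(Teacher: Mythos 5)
Your overall strategy coincides with the paper's: take the generating sets $\mathcal{H}_1,\dots,\mathcal{H}_5$, dispose of local generators and of $I$, $I^\ast$ by centrality (Lemma~\ref{lem:I}), kill every pair of total degree $\geq 6$ by Remark~\ref{rem:commute}, and attack the surviving degree-$(2,3)$ pairs by clasper calculus combined with orientation-reversal symmetry. This is exactly what the paper does: it rewrites $H\cdot Y$ (resp.\ $D\cdot Y$), via Lemmas~\ref{lem:slide}, \ref{lem:twist}, \ref{fork} and the IHX/STU relations, as a product of factors each $C_6$-equivalent to its own orientation reversal, concluding $H\cdot Y\stackrel{C_6}{\sim}\overline{H\cdot Y}=Y\cdot H$. (One small slip in your enumeration: a degree-$2$ generator against a degree-$4$ generator has total degree $6$, so that pair, like $[H,D]$, is already covered by Remark~\ref{rem:commute}; your closing summary corrects this.)

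The genuine gap is the pair $(Y,Y^\ast)$, of total degree $4$. You propose to handle it by the same direct computation, ``controlling contributions in a narrow degree band,'' but this is precisely where that method breaks down: $[Y,Y^\ast]$ is a priori only $C_4$-trivial, so corrections appear in two degree bands, each degree-$4$ correction must itself be shown $C_6$-equivalent (not merely $C_5$-equivalent) to its reversal, and manipulating those corrections generates new degree-$5$ terms; nothing in your sketch explains how this process terminates. The paper sidesteps the computation entirely with the purely algebraic Lemma~\ref{lem:commute1}, which you never invoke: once the $(2,3)$ cases are established, $Y$ commutes with \emph{every} $C_3$-trivial string link in $\mathcal{SL}(2)/C_6$ (by Remarks~\ref{rem:trivial} and \ref{rem:commute}), and since $Y^\ast$ is the inverse of $Y$ in $\mathcal{SL}(2)/C_3$ (Lemma~\ref{lem:twist}), Lemma~\ref{lem:commute1} with $G=Y$, $G'=Y^\ast$, $l=2$, $k=6$ gives the commutativity at once --- the point being that $Y\cdot Y^\ast$ is itself $C_3$-trivial, hence commutes with $Y$, which forces $Y^\ast$ to commute with $Y$. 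You should replace your direct computation for this case by that argument (or else supply the missing termination of the band-by-band computation).
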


\begin{proof}
We use the generating sets $\mathcal{H}_i$ for $\mathcal{SL}_i(2)/C_{i+1}$ specified above ($1\le i\le 5$). 
In order to prove Proposition~\ref{lem:abelian6}, 
it suffices to show that, for any two elements 
$A$ and $B$ in $\cup_{i=1}^{5} \mathcal{H}_{i}$, we have that $A$, $A^*$, $B$ and $B^*$ 
commute with each other in $\mathcal{SL}(2)/C_6$. 

By Remark \ref{rem:commute}, two generators of degree $k$ and $k'$ commute in $\mathcal{SL}(2)/C_6$ if $k+k'\ge 6$. 
Hence all generators of degree $5$ are central in $\mathcal{SL}(2)/C_6$. 
Remark \ref{rem:commute} also implies that $H$, $H^\ast$, $D$ and $D^\ast$ commute with each other, 
Moreover, we may safely ignore local generators, since these are central elements in $\mathcal{SL}(2)$, 
as well as $I$ and $I^\ast$ (by Lemma \ref{lem:I}). 

So we only need to check that the generators of degree $2$,  
$Y$ and $Y^*$, both commute with $H$, $H^*$, $D$ and $D^*$, and that $Y$ and $Y^*$ commute with each other.  

Let us first show that $Y$ and $H$ commute in $\mathcal{SL}(2)/C_6$. 
The proof is given in Figures \ref{fig:YH1} and \ref{fig:YH2} as follows.
Consider the product $H\cdot Y$. 
By Lemmas \ref{lem:slide} and \ref{lem:twist}, we have that $H\cdot Y\stackrel{C_6}{\sim} G\cdot K$, 
where $G$ and $K$ are shown in Figure \ref{fig:YH1}.  
  \begin{figure}[!h]
  \includegraphics{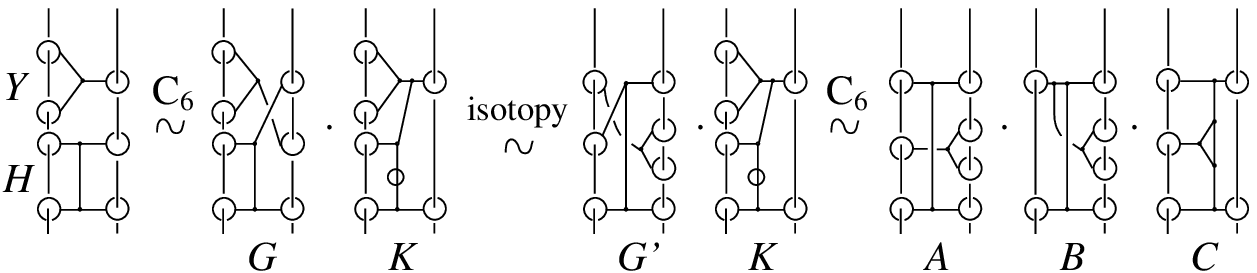}
  \caption{}\label{fig:YH1}
 \end{figure}
Observe that $G$ `locally' contains a copy of $Y$, so that we can use Fact~\ref{eq:Y} to show that $G$ is 
isotopic to the string link $G'$ represented in Figure \ref{fig:YH1}.
Now, by a second application of Lemma \ref{lem:slide}, $G'$ is $C_6$-equivalent to the product 
of the two string links $A$ and $B$ represented on the right-hand side of the figure.  
Also, Lemmas \ref{lem:twist} and \ref{fork} imply that $K\stackrel{C_6}{\sim} C$, 
where $C$ is also given in Figure \ref{fig:YH1}.   

Let us now focus on the string link $B$.
By the IHX relation and Lemma \ref{lem:twist}, we have 
$B\stackrel{C_6}{\sim} B'\cdot B''$, as illustrated in Figure \ref{fig:YH2}.
  \begin{figure}[!h]
  \includegraphics{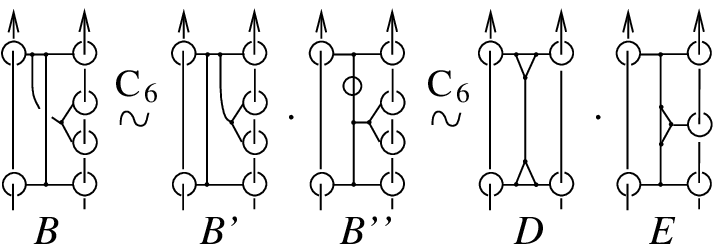}
  \caption{}\label{fig:YH2}
 \end{figure}
Applying Lemmas \ref{fork} and \ref{lem:twist} to the string link $B'$ and $B''$, 
we deduce that $B\stackrel{C_6}{\sim} D\cdot E$, where $D$ and $E$ are represented on the right-hand side 
of Figure \ref{fig:YH2}.

Hence we have shown that $H\cdot Y\stackrel{C_6}{\sim}  A\cdot C\cdot D\cdot E$. 
Observe that $A\stackrel{C_6}{\sim}\overline{A}$ by Lemma~\ref{lem:cc},  
and that the three string links $C$, $D$ and $E$ are symmetric. 
We thus have 
 $$ H\cdot Y\stackrel{C_6}{\sim} \overline{H\cdot Y} =  Y\cdot H, $$
where the equality follows from the fact that $Y$ and $H$ are also symmetric. 

The same argument, together with Lemma \ref{lem:twist}, shows that $Y$ commutes with $H^\ast$ 
and that $Y^\ast$ commutes with $H$ and $H^\ast$.  

Similarly, we now show that $Y$ and $Y^\ast$ commute with $D$ and $D^\ast$ as follows. 
Starting with the product $D\cdot Y$ and applying Lemmas \ref{lem:slide} and \ref{lem:twist} twice as shown in  
Figure \ref{fig:YD1}, we obtain that $D\cdot Y\stackrel{C_6}{\sim} M\cdot N\cdot O$, 
where $M$, $N$ and $O$ are represented on the right-hand side of the figure. 
  \begin{figure}[!h]
  \includegraphics{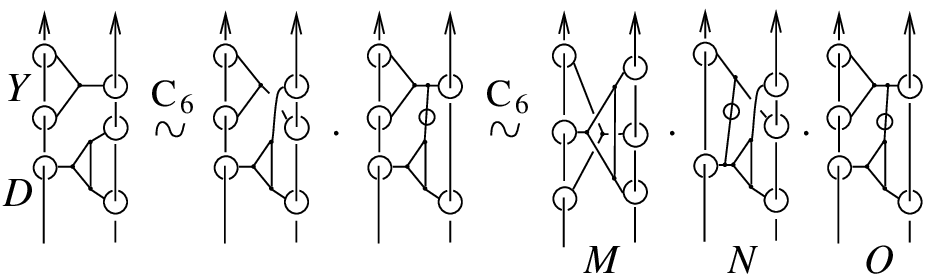}
  \caption{}\label{fig:YD1}
 \end{figure}
The STU relation shows that $N\stackrel{C_6}{\sim} P\cdot Q$, 
where $P$ and $Q$ are represented in Figure \ref{fig:YH2}.  
Now, by Lemmas \ref{lem:twist} and \ref{fork} we see that $O$ and $P$ are 
both $C_6$-equivalent to the string link $R$ shown in Figure \ref{fig:YH2}.
  \begin{figure}[!h]
  \includegraphics{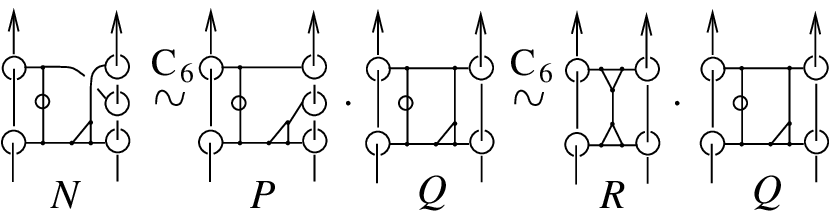}
  \caption{}\label{fig:YD2}
 \end{figure}
Summarizing, we have shown that 
$$  D\cdot Y\stackrel{C_6}{\sim} M\cdot R^2\cdot Q. $$
Now, we have that $M\stackrel{C_6}{\sim} \overline{M}$ by Lemma \ref{lem:cc},  
and we also have $Q\stackrel{C_6}{\sim} Q'$ by Lemma \ref{fork}(2). Hence we obtain that 
$D\cdot Y\stackrel{C_6}{\sim} \overline{D\cdot Y} = Y\cdot D$,
where the equality follows from the fact that $Y$ and $D$ are both symmetric. 

The same argument shows that $Y$ commutes with $D^\ast$ 
and that $Y^\ast$ commutes with $D$ and $D^\ast$.

It only remains to prove that $Y$ and $Y^\ast$ commute. 
So far, we proved that $Y$ commutes with any $C_3$-trivial string link in $\mathcal{SL}(2)/C_6$. 
Since by Lemmas \ref{lem:twist}, $Y^\ast$ is the inverse of $Y$ in $\mathcal{SL}(2)/C_3$, we can apply  
Lemma \ref{lem:commute1} to conclude that $Y$ and $Y^\ast$ commute in $\mathcal{SL}(2)/C_6$. 
\end{proof}

\subsection{Non-abelian case: $k\ge 8$}\label{sec:proof2}

Denote by $\mathcal{A}(2)$ the $\mathbb{Q}$-vector space of 
Jacobi diagrams on two strands, modulo the Framing Independance (FI) and STU relation,  
and denote by $\mathcal{A}_k(2)$ the subspace generated by degree $k$ elements 
(see e.g. \cite{BNv} for the definitions). 
The stacking product $\cdot$ endows $\mathcal{A}(2)$ with an algebra structure. 
As is well-known, $\mathcal{A}(2)$ coincides with the space of chord diagrams on two strands modulo the FI and 4T relations \cite{BNv}. 

In \cite{DK}, Duzhin and Karev showed that $\mathcal{A}(2) / \mathcal{A}_8(2)$, hence  $\mathcal{A}(2)$,  
is noncommutative. 
More precisely, let $D_H\in \mathcal{A}_3(2)$ and $D_S\in \mathcal{A}_4(2)$ be the two Jacobi diagrams shown in Figure \ref{fig:diags}.
   \begin{figure}[!h]
   \includegraphics{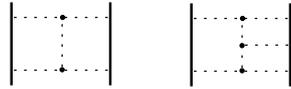}
   \caption{The Jacobi diagrams $D_H\in \mathcal{A}_3(2)$ and $D_S\in \mathcal{A}_4(2)$.}\label{fig:diags}
  \end{figure}
Then we have that $D_H\cdot D_S\neq D_S\cdot D_H$ in $\mathcal{A}_7(2)$ \cite[Prop. 1]{DK}. 
We now show how this result implies that $\mathcal{SL}(2)/C_8$ is not abelian.

For $k\ge 0 $, let $J_k(2)$ denote the subgroup of $\mathbb{Z}\mathcal{SL}(2)$ generated by singular $2$-string links with $k$ double points, 
via the Vassiliev skein relation 
$$ \double = \Over - \under. $$
By definition, the difference of two string links is in $J_{k+1}(2)$ if and only if 
they cannot be distinguished by any finite type invariant of degree $\le k$. 
There is a well-known isomorphism 
$$ \xi_k : \mathcal{A}_k(2)\rightarrow (J_k(2) / J_{k+1}(2))\otimes \mathbb{Q} $$ 
which ``maps chords to double points'', with inverse given by the Kontsevich integral.  

Now recall from Section \ref{sec:abelian} that $H$ and $S^1_{Id}$ denote the $2$-string links 
obtained from $\mathbf{1}_2$ by surgery along the tree clasper $h$ and $s^1_{Id}$ represented in Figure \ref{fig:tree}, respectively.  
(For simplicity, we will use here the simpler notation $S$ for the string link $S^1_{Id}$.)  
Then the image of $D_H\cdot D_S - D_S\cdot D_H$ by $\xi_7$ is (up to a sign) the difference 
$H\cdot S - S\cdot H$  
(this can be checked using a standard argument on the good behavior of the Kontsevich integral on alternate sums defined by claspers, 
see e.g. \cite[Appendix E]{ohtsuki}). 
This shows that $H\cdot S$ and $S\cdot H$ can be distinguished by some degree $7$ finite type invariant. 
Since two $C_k$-equivalent (string) links cannot be distinguished by any finite type invariant of degree $<k$, 
we deduce that $H\cdot S$ and $S\cdot H$ are not $C_8$-equivalent.

\subsection{The case $k=7$}

In this section, we prove Theorem \ref{prop:c7}, hence we suppose that $\mathcal{SL}(2) / C_7$ has no $2$-torsion. 

We proceed as in Section \ref{sec:abelian}, 
using the generating sets $\mathcal{H}_i$ for $\mathcal{SL}_i(2) / C_{i+1}$ ($i\le 5$). 
By Remark \ref{rem:commute}, elements of $\mathcal{SL}_6(2) / C_7$ are central in $\mathcal{SL}(2)/C_7$, 
so we only need to consider these generators of degree $\le 5$. 
As above, we may also safely ignore local generators, as well as $I$ and $I^\ast$. 
Actually, by Lemma \ref{lem:I} and Remark~\ref{rem:commute}, we only have to check the following commutativity properties : 
\begin{enumerate}
 \item[(3,3)] Generators of degree $3$ commute with each other. 
 \item[(2,4)] Generators of degree $4$ commute with generators of degree $2$.
 \item[(2,3)] Generators of degree $3$ commute with generators of degree $2$.
 \item[(2,2)] Generators of degree $2$ commute with each other. 
\end{enumerate}
\noindent (Recall from Section \ref{sec:abelian} that by generator of degree $k$ we mean any element of $\mathcal{H}_k$ or its fixed inverse.) 

{\bf Case $(3,3)$}: 
We have to show that $H,H^*, D$ and $D^*$ commute with each other in $\mathcal{SL}(2)/C_7$.  
Since, by Remark \ref{rem:commute}, each of these elements commute with any $C_4$-trivial string link in $\mathcal{SL}(2) / C_7$, 
we can apply Lemma \ref{lem:commute1} to show that $H$ and $H^\ast$ (resp. $D$ and $D^\ast$) commute  in $\mathcal{SL}(2) / C_7$.
Moreover, we have the following result, whose proof is postponed to the end of this section. 
\begin{claim}\label{lem:dcentral}
 The string links $D^2$ and $(D^*)^2$ are both $C_4$-equivalent to a central element in $\mathcal{SL}(2)$. 
\end{claim}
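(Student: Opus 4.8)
The plan is to analyse $D^2$ directly with clasper calculus. Since $D$ is $C_3$-trivial, so is $D^2$, and its class lies in the finitely generated abelian group $\mathcal{SL}_3(2)/C_4$ with generating set $\{H,D\}\cup\mathcal{H}^l_3$; the task is to exhibit a representative of this class that is central in the full monoid $\mathcal{SL}(2)$. Note that $D$ is automatically central in $\mathcal{SL}(2)/C_4$, since every string link is $C_1$-trivial and $D$ is $C_3$-trivial (Remark~\ref{rem:commute}); but this is far weaker than centrality in $\mathcal{SL}(2)$, which is why an honest representative must be produced. The first useful observation is that, modulo $C_4$, the two copies of the clasper $d$ defining $D^2$ barely interact: by Lemmas~\ref{lem:slide} and \ref{lem:cc}, sliding a leaf of one copy across a leaf of the other, or changing a crossing between their edges, produces correction terms supported on claspers of degree $\geq 6$, hence trivial in $\mathcal{SL}(2)/C_4$. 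Thus $D^2$ is $C_4$-equivalent to surgery along any configuration obtained from two copies of $d$ by freely rearranging one copy relative to the other.

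I would exploit this freedom, together with the splitting, fork, twist and AS/IHX/STU relations (Lemmas~\ref{lem:split}, \ref{fork}, \ref{lem:twist} and \ref{asihxstu}), to reduce $D^2$ modulo $C_4$ to a string link $C$ that is invariant under the $360^\circ$ rotation about the vertical axis employed in the proof of Lemma~\ref{lem:I}. The point of squaring is precisely that a single copy of $d$ is not preserved by this rotation --- which is why $D$ itself is not central --- whereas the symmetric pair of claspers appearing in $D^2$ can be arranged so as to be carried to itself modulo $C_4$. Once such a symmetric representative $C$ is produced, the same rotation isotopy that proves Lemma~\ref{lem:I} shows that $C\cdot L$ is isotopic to $L\cdot C$ for every $L\in\mathcal{SL}(2)$, so that $C$ is central in $\mathcal{SL}(2)$, and $D^2\stackrel{C_4}{\sim}C$ as required. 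The assertion for $(D^\ast)^2$ then follows by the identical argument, the clasper $d^\ast$ being obtained from $d$ by a single half-twist (Convention~\ref{conv_star}) and enjoying the same symmetry.

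The main obstacle is the explicit reduction in the middle step: one must check, for the particular clasper $d$ of Figure~\ref{fig:tree}, that the correction terms generated while symmetrising the two copies of $d$ are either central (local knots, or powers of $I$) or of degree $\geq 4$, and that the surviving configuration genuinely admits the rotational symmetry needed to invoke the Lemma~\ref{lem:I} argument. Keeping track of these corrections, and confirming that the doubled clasper can be put into rotation-symmetric position rather than merely commuting with $D^\ast$ up to a vacuous central factor, is the delicate part; this bookkeeping is best organised as a diagrammatic computation, which is why I would present it through a sequence of figures rather than in prose.
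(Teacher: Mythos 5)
Your opening observations are sound: since $d$ has degree $3$, Lemmas~\ref{lem:slide} and~\ref{lem:cc} do let you reposition the two copies of $d$ in $D^2$ relative to one another at the cost of corrections carried by claspers of degree $6$, which vanish in $\mathcal{SL}(2)/C_4$. The gap is in the mechanism you propose for deducing centrality. The argument of Lemma~\ref{lem:I} is \emph{not} of the form ``$C$ is invariant under the $360^\circ$ rotation, hence $C$ is central''. It works because $I$ is the full twist, i.e.\ exactly the braid swept out by that rotation: rotating the sub-cylinder containing $L$ by a full turn absorbs the twist sitting below $L$ and re-creates it above $L$, carrying $I\cdot L$ to $L\cdot I$. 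For a general string link $C$, ``being carried to itself by the $360^\circ$ rotation'' is vacuous: performed rel endpoints, that rotation amounts to conjugating by the full twist $I$, and since $I$ is central, \emph{every} $2$-string link is invariant under it up to isotopy (in particular, your claim that a single copy of $d$ fails this test, while the doubled clasper passes it, cannot be correct --- the test distinguishes nothing). Rotational symmetry of a representative gives you no isotopy from $C\cdot L$ to $L\cdot C$, so even if your symmetrisation were carried out, the conclusion ``hence $C$ is central in $\mathcal{SL}(2)$'' would not follow.

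What is actually needed is a representative assembled from elements that are central for an honest geometric reason: local knots, which slide along a single strand through anything, and surgeries on claspers that can be slid past any string link (for instance because their leaves encircle \emph{both} strands near the boundary annulus $\partial D^2\times I$). The paper's proof of Claim~\ref{lem:dcentral} is organised ``top-down'' around exactly this: it starts from a $C_3$-graph $g$ whose surgery $G$ is manifestly central in this slidable sense, splits $G$ via Lemma~\ref{lem:split} into eight simple pieces, and then --- using isotopies, Lemma~\ref{lem:twist}, and a Whitehead-type symmetry of $D$ deduced from Lemma~\ref{fork} (Figure~\ref{fig:symD}) --- recognises the resulting product as $C\cdot(D^*)^2$, where $C$ is a visibly central product of two of the pieces; Lemma~\ref{lem:twist} then yields $D^2\stackrel{C_4}{\sim}C\cdot G^{-1}$, a product of central elements. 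Your bottom-up plan is missing both ingredients: the correct class of target central elements, and the symmetry property of $D$ that makes the recognition step possible. Note also that the repositioning freedom you establish only concerns the mutual position of the two copies of $d$, not how each copy wraps around the strands, so it cannot by itself transform $D^2$ into anything qualitatively different; all the substance of the claim lies in the reduction you defer to an unspecified diagrammatic computation, and that computation is aimed at the wrong target.
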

\noindent It follows from this fact and Remark \ref{rem:commute} 
that $D$ and $D^*$ both fullfill the assumptions of Lemma \ref{lem:commute2} with 
$(p,l,m,k)=(4,3,6,7)$. 
Applying the lemma then proves that $D$ (resp. $D^\ast$) commutes with any $C_3$-trivial string link in $\mathcal{SL}(2) / C_7$, 
and in particular with both $H$ and $H^\ast$.  

{\bf Case $(2,4)$}: 
We have
\begin{claim}\label{lem:ycentral}
 The string links $Y^2$ and $(Y^*)^2$ are both $C_3$-equivalent to a central element in $\mathcal{SL}(2)$. 
\end{claim}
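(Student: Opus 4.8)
The plan is to mimic the proof of Claim~\ref{lem:dcentral}, since the two statements are structurally identical: in both cases we must show that the square of a degree-$3$ generator is, up to $C_{k+1}$-equivalence, a central element of $\mathcal{SL}(2)$. The element $Y$ is the Whitehead string link, obtained from $\mathbf{1}_2$ by surgery along the $C_2$-tree $y$ of Figure~\ref{fig:tree}, so $Y^2$ is obtained by surgery along two parallel copies of $y$. The goal is to manipulate this union of claspers, using the standard calculus of Section~\ref{claspers}, into a clasper presentation that is manifestly symmetric under a $360^\circ$ rotation of the two strands about the vertical axis — the same isotopy used in Lemma~\ref{lem:I} — which is exactly what makes the resulting string link commute with every element of $\mathcal{SL}(2)$.

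First I would write $Y^2$ as $\mathbf{1}_{y\cup y'}$ where $y'$ is a vertical translate of $y$, and use Lemma~\ref{lem:slide} together with the AS/IHX/STU relations (Lemma~\ref{asihxstu}) and the fork lemma (Lemma~\ref{fork}) to reorganize the two $C_2$-trees. Since $\deg y_1+\deg y_2 = 4 = 3+1$, sliding a leaf of one copy past a leaf of the other produces correction terms of degree $C_4$ and higher, which is precisely the error term we are allowed — we are only claiming $C_3$-equivalence to a \emph{central} element, and central corrections of degree $\geq 3$ are absorbed. The target is a configuration exhibiting the symmetry of the Whitehead link (Fact~\ref{eq:Y}) so that the doubled clasper can be redrawn in a form invariant under the rotation isotopy of Figure~\ref{fig:rotate}. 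The treatment of $(Y^*)^2$ is entirely parallel, replacing $y$ by its half-twisted version $y^*$ and invoking Lemma~\ref{lem:twist} where needed.

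The main obstacle I expect is verifying that the rearranged clasper for $Y^2$ is genuinely symmetric under the vertical-axis rotation, rather than merely symmetric under the orientation-reversal $\overline{(\cdot)}$ used elsewhere in Proposition~\ref{lem:abelian6}. Centrality in $\mathcal{SL}(2)$ (not just up to $C_k$) is a stronger conclusion than the symmetry arguments of the abelian cases, so one must exhibit an actual ambient isotopy carrying $L\cdot Y^2$ to $Y^2\cdot L$, exactly as in the proof of Lemma~\ref{lem:I}. Concretely, after the clasper calculus reduces $Y^2$ (up to $C_3$) to surgery along a specific symmetric tree clasper, the point is that this clasper lies in a ball meeting the two strands symmetrically, so that rotating the strands by $360^\circ$ about the vertical axis fixes the clasper setwise while interchanging the top and bottom, realizing the commutation. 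I would carry out this argument with an explicit figure, analogous to Figure~\ref{fig:rotate}, displaying the symmetric clasper obtained for $Y^2$ and $(Y^*)^2$.
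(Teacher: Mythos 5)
Your plan correctly identifies that the proof should mimic that of Claim~\ref{lem:dcentral}, which is indeed all the paper does, but the mechanism you describe is not the one that proof uses, and the two places where you deviate are exactly where your argument breaks.

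First, your centrality criterion is wrong. You propose to massage the clasper presentation of $Y^2$ into a form invariant under the $360^\circ$ rotation of the two strands about the vertical axis, and to conclude centrality ``exactly as in the proof of Lemma~\ref{lem:I}''. But a rotation about the vertical axis preserves every horizontal level; it cannot ``interchange the top and bottom'' as you assert, so invariance of a clasper under this rotation does not produce an isotopy from $Z\cdot L$ to $L\cdot Z$. The rotation argument of Lemma~\ref{lem:I} works only because $I$ is the full twist, which the rotation itself creates and absorbs. The central elements actually used in the proof of Claim~\ref{lem:dcentral} are of a different nature: local knots, and the string link $G$ obtained by surgery along a clasper \emph{all of whose leaves encircle both strands} (Figure~\ref{fig:splitD}); such a $G$ is central because the entire clasper can be slid along the pair of strands through any string link, with no rotation involved.

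Second, the direction of your argument is reversed relative to the paper's, and the reversal is not innocent. The paper does not transform the clasper $y\cup y'$ presenting $Y^2$ into a central configuration. It starts from the manifestly central element $G$ above, applies Lemma~\ref{lem:split} --- the one lemma your proposal never invokes --- repeatedly to decompose $G$, up to $C_3$-equivalence, into a product of eight simple $C_2$-tree surgeries, and then uses isotopies, Lemma~\ref{lem:twist} and the Whitehead symmetry (Fact~\ref{eq:Y}) to recognize this product as $C\cdot (Y^*)^2$ with $C$ central; inverting via Lemma~\ref{lem:twist} gives $Y^2\stackrel{C_3}{\sim} C\cdot G^{*}$, a product of central elements. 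Your merging direction cannot be run with the tools you list (Lemmas~\ref{lem:slide}, \ref{asihxstu}, \ref{fork}): the reverse of Lemma~\ref{lem:split} only applies to two claspers that \emph{coincide} outside the leaf being merged, which two parallel copies of $y$ never do, and the leaf count already signals the obstruction --- $y\cup y'$ meets strand $1$ four times but strand $2$ only twice, so no pairing of its leaves can yield a clasper all of whose leaves encircle both strands. As written, the proposal is missing the key construction (the auxiliary central element $G$ together with Lemma~\ref{lem:split}) and rests on a centrality mechanism that is geometrically false.
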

\noindent (The proof of Claim~\ref{lem:ycentral} uses the exact same arguments as the proof of Claim~\ref{lem:dcentral}, 
although it turns out to be actually significantly simpler.)
 
\noindent Claim \ref{lem:ycentral} and Remark \ref{rem:commute} ensure that both $Y$ and $Y^\ast$ satisfy the assumptions of 
Lemma \ref{lem:commute2} with $(p,l,m,k)=(3,4,6,7)$. 
Applying the lemma then shows that $Y$ and $Y^\ast$ commute with any generator of degree $4$ in $\mathcal{SL}(2) / C_7$. 

{\bf Case $(2,3)$}: 
Using Case~$(3,3)$ and Claim \ref{lem:ycentral}, we can now apply 
Lemma \ref{lem:commute2} to the string links $Y$ and $Y^\ast$ with $(p,l,m,k)=(3,3,5,7)$.
Applying the lemma then shows that $Y$ and $Y^\ast$ commute with any generator of degree $3$ in $\mathcal{SL}(2) / C_7$. 

{\bf Case $(2,2)$}: 
Let us show that $Y$ and $Y^*$ commute with each other in $\mathcal{SL}(2)/C_7$.   
By Case~$(2,3)$, we have that $Y$ commutes with any $C_3$-trivial string link in $\mathcal{SL}(2) / C_7$. 
Since by Lemma \ref{lem:twist}, $Y^*$ is the inverse of $Y$ in $\mathcal{SL}(2) / C_3$, 
we can apply Lemma \ref{lem:commute1} to get the desired result. 

We conclude this section with the proof of Claim \ref{lem:dcentral}. 
\begin{proof}[Proof of Claim \ref{lem:dcentral}]
Let us only give the proof for the $2$-string link $D$ (the proof for $D^\ast$ is easily deduced). 
We first notice that, as a consequence of Lemma \ref{fork}, the string link $D$ (and likewise, $D^*$) has a symmetry property similar 
to that of the Whitehead string link $Y$ in $\mathcal{SL}(2) / C_4$ (see Figure \ref{fig:whitehead}) :  
   \begin{figure}[!h]
   \includegraphics{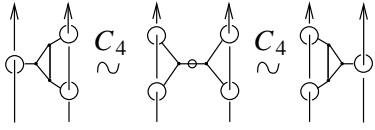}
   \caption{A Whitehead-type symmetry for the $2$-string link $D$.}\label{fig:symD}
  \end{figure}

Now consider the $C_3$-graph $g$ for $\1_2$ shown on the left-hand side of Figure \ref{fig:splitD}. 
Clearly, the $2$-string link $G$ obtained by surgery along $g$ is central in $\mathcal{SL}(2)$. 
By applying repeatedly Lemma \ref{lem:split}, we have that $G$ is $C_4$-equivalent to a product $\prod_{i=1}^8 (\1_2)_{g_i}$ 
of eight $2$-string links, each obtained from $\1_2$ by surgery along a simple $C_4$-graph, as shown in Figure \ref{fig:splitD}. 
By several isotopies, Lemma \ref{lem:twist} and the symmetry property of Figure \ref{fig:symD}, we deduce that 
$G$ is $C_4$-equivalent to the product $C\cdot (D^*)^2$, where $C=(\1_2)_{g_1}\cdot (\1_2)_{g_5}$ is clearly central in $\mathcal{SL}(2)$
(see Figure \ref{fig:splitD}). 
   \begin{figure}[!h]
   \includegraphics{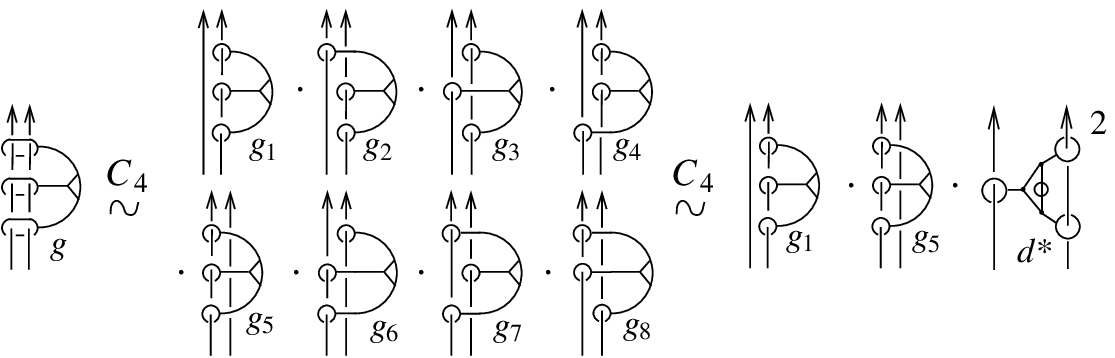}
   \caption{ }\label{fig:splitD}
  \end{figure}

Using Lemma \ref{lem:twist}, we thus have that $D^2\stackrel{C_4}{\sim} C\cdot G^{-1}$, 
where $G^{-1}$ denotes the inverse of the central element $G$ in $\mathcal{SL}(2)$. The result follows.  
\end{proof}
\begin{remark}\label{rem:by_hand}
The techniques used in Section \ref{sec:abelian} to prove that $\mathcal{SL}(2)/C_k$ is abelian for $k\le 6$ 
can also be applied to the case $k=7$ to some extent. 
More precisely, by fixing a generating set $\mathcal{H}_6$ for $\mathcal{SL}_6(2)/C_7$, 
one can try to show directly that, for any two elements 
$A$ and $B$ in $\cup_{i=1}^{6} \mathcal{H}_{i}$, we have that $A$, $A^*$, $B$ and $B^*$ 
commute with each other in $\mathcal{SL}(2)/C_7$.  
We can actually apply our methods, involving somewhat advanced calculus of claspers, to prove that this is indeed the case, 
except for \emph{one} computation that remains open. Namely, the case where $A=Y$ and $B=H$ remains open so far, 
and is the only missing case to establish the commutativity of $\mathcal{SL}(2)/C_7$ (without the $2$-torsion assumption).  
\end{remark}
\section{The group of $C_k$-concordance classes of string links. }\label{sec:ckconc}
In this section, we prove Theorem~\ref{abelian2} and Proposition \ref{nonabeliansl}. 
We start with a brief review on the $C_k$-concordance. 
\subsection{$C_k$-concordance}\label{sec:ckc}
Let $k$ and $n$ be positive integers.  
Two $n$-string links $L, L'$ are \emph{$C_k$-concordant} if there is a sequence 
$L=L_0,L_1, ... ,L_m=L'$ of $n$-string links such that for each $i\ge 1$, either $L_i\stackrel{C_{k+1}}{\sim} L_{i+1}$ 
or $L_i$ is concordant to $L_{i+1}$.  
We denote the $C_k$-concordance relation by $\stackrel{C_k+c}{\sim}$. 

A string link is  {\em $(C_k+c)$-trivial} if it is $C_k$-concordant to the trivial string link.

It is well-known that Milnor invariants are concordance invariants \cite{casson}.  
So by \cite[Thm. 7.1]{H}, Milnor invariants of length $\le k$ are $C_k$-concordance invariants.
Habegger and Masbaum showed that all \emph{rational} finite type concordance invariants of 
string links are given by Milnor invariants via the Kontsevich integral \cite{HMa}.  

It is known that surgery along graph claspers \emph{with loops} 
(i.e. graph claspers that are not tree claspers) implies concordance.  
\begin{lemma}(\cite{CT,GL})\label{graph} 
For any graph clasper with loop $g$ for $\1$, the string link $\1_g$ is concordant to $\1$.  
\end{lemma}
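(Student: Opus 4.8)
The plan is to follow the strategy of Conant--Teichner \cite{CT} and Garoufalidis--Levine \cite{GL}: reduce the general statement to the simplest possible loop clasper, and then exhibit a concordance by hand. The essential phenomenon is that a loop in $g$ forces the associated surgery to be ``doubled'' in a way that bounds an embedded annulus in $(D^2\times I)\times I$. Since $g$ is a clasper for $\1$, it lies in the interior of $D^2\times I$, away from the endpoints, so any concordance produced this way will automatically be supported in the interior and hence fix $\partial L\times I$ as required.

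First I would locate a cycle in the graph underlying $g$ and normalize the configuration. Working at the level of the surgery link $\gamma(g)$ (rather than up to $C_{k+1}$-equivalence, since the calculus lemmas of Section~\ref{claspers} only give that coarser relation), I would use the leaf-sliding and ``zip'' constructions of \cite{H} that are realized by genuine ambient isotopy to isolate a single edge $e$ of the cycle, with the remaining edges and leaves of $g$ hanging off it. After this normalization, the part of $\gamma(g)$ coming from the loop becomes a pair of parallel, oppositely-clasped surgery curves; equivalently, $\1_g$ is obtained from $\1$ by performing two cancelling clasper surgeries that are ``linked through'' the loop.

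The key geometric step is then to observe that such a cancelling pair is trivial up to concordance. The loop provides precisely the null-homotopy needed to slide one of the two cancelling operations past the other, so that in the four-dimensional trace inside $(D^2\times I)\times I$ the two operations sweep out an embedded annulus joining $\1_g$ to $\1$, rather than merely an isotopy in $D^2\times I$. This is the content of the $1$-loop case in \cite{GL}, where the loop is what upgrades the algebraic cancellation $[x,x^{-1}]=1$ to an honest geometric concordance; the general graph with possibly several nodes or loops then follows by iterating the reduction, composing the resulting concordances.

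The main obstacle is the normalization step. One must check that a cycle of $g$ can genuinely be isolated by boundary-fixing isotopies of $\gamma(g)$, and that the extra edges and leaves hanging off the loop neither obstruct the cancelling-pair description nor interfere with the embeddedness of the concordance annulus. For graphs with several loops this requires care that the successive concordances remain supported away from $\partial L\times I$ and compose to a single concordance; once the model $1$-loop case is settled, this inductive bookkeeping is the remaining routine (if delicate) work.
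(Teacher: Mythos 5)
The paper never proves Lemma \ref{graph}: it is quoted outright from \cite{CT} and \cite{GL}, so the only meaningful question is whether your sketch would stand on its own as a proof. It would not, and the failure is concentrated exactly at what you call the ``key geometric step.'' Your normalization is reasonable in spirit: cutting one edge of the cycle using Habiro's moves in \cite{H} (which preserve the surgered link on the nose, not merely up to $C_{k+1}$-equivalence --- you are right to insist on this) turns a one-loop graph clasper into a tree clasper carrying a clasped pair of new leaves disjoint from $\1$. But when an actual concordance must be produced, you assert that the cancelling pair ``sweeps out an embedded annulus'' and justify this by saying it ``is the content of the $1$-loop case in \cite{GL}.'' That \emph{is} the statement being proved (for connected one-loop claspers): all of the genuine content of the lemma has been delegated to the citation, so as an independent argument your proof is circular. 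In effect you have reproduced the paper's own treatment --- a citation of \cite{CT,GL} --- but dressed up as a derivation.

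Moreover, the heuristic you offer in place of that step would fail if taken literally. A null-homotopy provided by the loop lets one of the two cancelling operations slide across the other only at the cost of self-intersections of the trace; what this produces in $(D^2\times I)\times I$ is an \emph{immersed} annulus from $\1_g$ to $\1$, whereas concordance requires an \emph{embedded} one. Converting the immersed picture into an embedded concordance is precisely the difficulty that \cite{GL} (via ribbon-move arguments) and \cite{CT} (via grope and cap surgery in the four-manifold) were written to resolve; the slogan that the loop ``upgrades the algebraic cancellation $[x,x^{-1}]=1$ to an honest geometric concordance'' names the phenomenon but is not an argument for it. So either cite \cite{CT,GL} outright, as the paper does, or supply the embedding step; the normalization by itself, however carefully carried out, does not bring you closer to a proof.
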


By combining this lemma and the STU relation, we have the following. 
\begin{lemma} \label{lem:STUc}
  Let $g_T$ and $g_U$ be two $C_k$-trees for $\mathbf{1}$ which differ only in a small ball
  as depicted in Figure \ref{asihxstu_fig}. Then
  $\mathbf{1}_{g_T} \stackrel{C_{k+1}+c}{\sim} \mathbf{1}_{g_U}$.
\end{lemma}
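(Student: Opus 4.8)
The plan is to combine the STU relation from clasper calculus (Lemma~\ref{asihxstu}) with the concordance-triviality of looped graph claspers (Lemma~\ref{graph}). The target statement asserts that if $g_T$ and $g_U$ are two $C_k$-trees differing only in a small ball as in the STU picture, then $\mathbf{1}_{g_T} \stackrel{C_{k+1}+c}{\sim} \mathbf{1}_{g_U}$. The key observation is that the STU relation, as stated in Lemma~\ref{asihxstu}, involves \emph{three} graph claspers $g_S$, $g_T$, $g_U$, and relates them via $\mathbf{1}_{g_S}\cdot \mathbf{1}_{g_T} \stackrel{C_{k+1}}{\sim} \mathbf{1}_{g_U}$, where $g_S$ is the configuration with the two relevant edges joined into a \emph{single loop}. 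So the difference between the tree version (Lemma~\ref{lem:STUc}) and the graph version (Lemma~\ref{asihxstu}) is precisely the presence of the clasper $g_S$ carrying a loop.

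First I would invoke Lemma~\ref{asihxstu} (STU) to write
\[
  \mathbf{1}_{g_S}\cdot \mathbf{1}_{g_T} \stackrel{C_{k+1}}{\sim} \mathbf{1}_{g_U},
\]
where $g_S$ is the graph clasper with a loop obtained from the local STU configuration. The point is that $g_S$, being the ``S'' term, is a graph clasper \emph{with a loop} (it is not a tree). Next I would apply Lemma~\ref{graph} to $g_S$: since $g_S$ has a loop, the string link $\mathbf{1}_{g_S}$ is concordant to the trivial string link $\mathbf{1}$. Concordance is one of the two elementary moves generating the $C_k$-concordance relation, so $\mathbf{1}_{g_S} \stackrel{C_{k+1}+c}{\sim} \mathbf{1}$.

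Finally I would assemble these two facts. Multiplying the concordance $\mathbf{1}_{g_S}\stackrel{C_{k+1}+c}{\sim}\mathbf{1}$ on the right by $\mathbf{1}_{g_T}$ (concordance is compatible with the stacking product, and so is $C_{k+1}$-equivalence, so the generated relation $\stackrel{C_{k+1}+c}{\sim}$ is a congruence on the monoid), we obtain $\mathbf{1}_{g_S}\cdot \mathbf{1}_{g_T} \stackrel{C_{k+1}+c}{\sim} \mathbf{1}_{g_T}$. Combining this with the STU relation above—which gives $\mathbf{1}_{g_S}\cdot \mathbf{1}_{g_T} \stackrel{C_{k+1}+c}{\sim} \mathbf{1}_{g_U}$ a fortiori, since $C_{k+1}$-equivalence is stronger than $C_{k+1}+c$-equivalence—and using transitivity of $\stackrel{C_{k+1}+c}{\sim}$, we conclude $\mathbf{1}_{g_T} \stackrel{C_{k+1}+c}{\sim} \mathbf{1}_{g_U}$, as desired.

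The only genuinely delicate point is verifying that the ``S'' term in the STU relation is indeed a looped graph clasper rather than a tree, so that Lemma~\ref{graph} applies; this is built into the STU picture (Figure~\ref{asihxstu_fig}), where the $g_S$ configuration joins the two free ends into a single loop. The remaining steps—compatibility of the equivalence relations with the stacking product and the fact that $C_{k+1}$-equivalence implies $C_{k+1}+c$-equivalence—are immediate from the definition of $C_k$-concordance in Section~\ref{sec:ckc}. I do not expect any serious obstacle here; this is a short bookkeeping argument layering concordance on top of the standard STU calculus.
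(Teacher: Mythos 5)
Your proof is correct and is precisely the paper's own argument: the paper obtains Lemma~\ref{lem:STUc} exactly by combining Lemma~\ref{graph} with the STU relation of Lemma~\ref{asihxstu}, using that the looped term $\mathbf{1}_{g_S}$ is concordant to $\mathbf{1}$ and that both generating relations of $C_{k+1}$-concordance are compatible with stacking. There is nothing to add.
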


The simple algebraic argument used to prove Lemmas~\ref{lem:commute1} also hold for $C_k$-concordance.
That is, we have the following 
\begin{lemma}\label{lem:commute1'}
Let $l$ and $k$ be positive integers $(l<k)$. 
Let $G$ be a $(C_l+c)$-trivial string link  
and let $G'$ be the inverse of $G$ in 
$\mathcal{SL}(n)/(C_{l+1}+c)$. If $G$ commutes with any $(C_{l+1}+c)$-trivial string link 
in $\mathcal{SL}(2)/(C_{k}+c)$, 
then $G$ and $G'$ commute in $\mathcal{SL}(2)/(C_{k}+c)$.
\end{lemma}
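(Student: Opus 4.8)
The plan is to mimic verbatim the short algebraic argument already given for Lemma~\ref{lem:commute1}, simply replacing every occurrence of the $C_k$-equivalence relation $\stackrel{C_k}{\sim}$ by the $C_k$-concordance relation $\stackrel{C_k+c}{\sim}$, and every instance of the group $\mathcal{SL}(n)/C_k$ by $\mathcal{SL}(n)/(C_k+c)$. The statement is purely formal once one knows that $\mathcal{SL}(n)/(C_k+c)$ is a group and that $\mathcal{SL}(n)/(C_{l+1}+c)$ is a quotient of it (so that ``inverse in the coarser quotient'' makes sense), both of which are part of the standing setup.

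First I would record the key hypothesis as an identity: since $G'$ is the inverse of $G$ in $\mathcal{SL}(n)/(C_{l+1}+c)$, the product $G\cdot G'$ is $(C_{l+1}+c)$-trivial. By assumption $G$ commutes with every $(C_{l+1}+c)$-trivial string link in $\mathcal{SL}(n)/(C_k+c)$, so in particular $G$ commutes with $G\cdot G'$. Writing $G^{-1}$ for the inverse of $G$ in $\mathcal{SL}(n)/(C_k+c)$ (which exists because that quotient is a group), I would then run exactly the three-step chain
\[
G\cdot G' \stackrel{C_k+c}{\sim} G^{-1}\cdot G\cdot (G\cdot G') \stackrel{C_k+c}{\sim} G^{-1}\cdot (G\cdot G')\cdot G \stackrel{C_k+c}{\sim} G'\cdot G,
\]
where the middle equivalence is precisely the commutation of $G$ with the $(C_{l+1}+c)$-trivial element $G\cdot G'$, and the outer two are trivial insertions and cancellations of $G^{-1}\cdot G$. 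This yields $G\cdot G' \stackrel{C_k+c}{\sim} G'\cdot G$, which is the desired commutativity.

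There is essentially no obstacle here: the argument never uses any geometric feature specific to $C_k$-equivalence, only the group axioms for the relevant quotient and the meaning of ``inverse'' in a coarser quotient. The one point worth a sentence of care is to confirm that the relation $\stackrel{C_k+c}{\sim}$ is compatible with the stacking product (so that one may multiply equivalent string links on the left and right and preserve the relation) and that $G\cdot G'$ being $(C_{l+1}+c)$-trivial really does put it in the class of elements with which $G$ is assumed to commute; both are immediate from the definitions in Section~\ref{sec:ckc}. I would therefore present the proof as a single displayed chain of equivalences, exactly parallel to the proof of Lemma~\ref{lem:commute1}, and note that the verbatim transcription goes through.
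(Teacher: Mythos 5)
Your proposal is correct and is exactly what the paper does: the paper gives no separate proof of Lemma~\ref{lem:commute1'}, stating only that the algebraic argument proving Lemma~\ref{lem:commute1} carries over verbatim to $C_k$-concordance, which is precisely the transcription you carry out (noting that $G\cdot G'$ is $(C_{l+1}+c)$-trivial, invoking the commutation hypothesis, and inserting/cancelling $G^{-1}\cdot G$ in the group $\mathcal{SL}(n)/(C_k+c)$).
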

\noindent Likewise, there is a `$C_k$-concordance version' of Lemma~\ref{lem:commute2} (see Lemma \ref{lem:commute3} below). 

\subsection{The ordered index}\label{sec:ordered}

In order to study $C_k$-concordance for string links, we use the notion of ordered index of a linear $C_k$-tree.  

Let $t$ be a simple tree clasper for a string link $L$. 
We call a leaf of $t$ an {\em$i$-leaf} if it intersects the $i$th component of $L$.  
The \emph{index} of $t$ is the collection of all integers $i$ such that $t$ contains an $i$-leaf, counted with multiplicities. 
For example, a simple $C_3$-tree of index $\{2,3^{(2)},5\}$ for $L$ intersects component $3$ twice and components $2$ and $5$ once 
(and is disjoint from all other components of $L$).  

For $k\ge 3$, a $C_k$-tree $G$ having the shape of the tree clasper in Figure~\ref{ckmove} 
is called a \emph{linear} $C_k$-tree. (Note in particular that a linear tree clasper is always assumed to be simple.) 
The left-most and right-most leaves of $G$ in Figure~\ref{ckmove} are called the \emph{ends} of $G$.
As a convention, any simple $C_k$-tree is linear for $k\le 2$ ;  
the ends of a linear $C_1$-tree (resp. $C_2$-tree) are its two leaves (resp. a choice of any two leaves). 

Let $t$ be a linear $C_k$-tree with ends $f_0,f_k$. 
Since $t$ is a disk, we can travel from $f_0$ to $f_k$ along $\partial t$ so that 
we meet all other leaves $f_1,...,f_{k-1}$ in this order.
If $f_s$ is an $i_s$-leaf $(s=0,...,k)$, we can consider two vectors
$(i_0,...,i_k)$ and $(i_k,...,i_0)$ and may assume that 
$(i_0,...,i_k)\leq(i_k,...,i_0)$, where \lq$\leq$\rq~  is the lexicographic order in ${\Bbb Z}^{k+1}$.
We call $(i_0,...,i_k)$ the {\em ordered index} of $t$ and denote it by o-index$(t)$. 
In the following, we will simply denote by $(i_0...i_k)$ an ordered index $(i_0,...,i_k)$. 

By combining Lemma \ref{graph} and \cite[Lemma~5.1]{MYftisl}, we have the following lemma.
\begin{lemma} \label{o-index}
(1)~Let $t$ and $t'$ be two linear $C_k$-trees for $\1$ with same ordered index.  
Then either $\1_{t}\stackrel{C_{k+1}+c}{\sim} \1_{t'}$, or $\1_{t}\cdot \1_{t'}\stackrel{C_{k+1}+c}{\sim} \1$.

(2)~Let $t$ be a linear $C_k$-tree $(k\geq 3)$ for $\1$ with o-index$(t)=(i_0...i_k)$. 
If $i_0=i_1$ or $i_{k-1}=i_k$, then
$\1_{t}\stackrel{C_{k+1}+c}{\sim} \1$.
\end{lemma}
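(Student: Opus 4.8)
The plan is to deduce both statements from the purely clasper-theoretic classification in \cite[Lemma~5.1]{MYftisl}, which is the analogue of Lemma~\ref{o-index} for ordinary $C_{k+1}$-equivalence, by observing that every correction term it produces is a surgery along a graph clasper \emph{containing a loop}, and that such surgeries become concordances via Lemma~\ref{graph}. In other words, the new content beyond \cite[Lemma~5.1]{MYftisl} is simply the slogan ``loop = concordance'', which upgrades each $\stackrel{C_{k+1}}{\sim}$ in that lemma to a $\stackrel{C_{k+1}+c}{\sim}$.

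First I would recall the mechanism underlying \cite[Lemma~5.1]{MYftisl}. Two linear $C_k$-trees $t$ and $t'$ for $\1$ with the same ordered index have the same combinatorial type, so $t'$ is obtained from $t$ by a sequence of isotopies, half-twists on edges, and crossing changes between edges of the \emph{same} tree. By Lemma~\ref{lem:twist} (and the AS relation of Lemma~\ref{asihxstu}) each half-twist replaces $\1_t$ by its inverse modulo $C_{k+1}$, and by the calculus behind Lemmas~\ref{lem:slide} and~\ref{lem:cc} each self-crossing change of a connected tree fuses two of its edges and hence, up to $C_{k+1}$-equivalence, costs the surgery along a single $C_k$-graph \emph{with a loop} (the fused edges closing a cycle). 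Consequently \cite[Lemma~5.1]{MYftisl} produces, modulo $C_{k+1}$-equivalence, that $\1_t$ equals $\1_{t'}$ or its inverse, according to the parity of the number of half-twists and AS moves used, times a product of surgeries along graph claspers with loops.

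To prove (1) I would then invoke Lemma~\ref{graph}: each graph-clasper-with-loop factor above is concordant to $\1$, so the $C_{k+1}$-equivalence becomes a $C_{k+1}+c$-equivalence. The parity noted above gives exactly the stated dichotomy: even parity yields $\1_t \stackrel{C_{k+1}+c}{\sim}\1_{t'}$, and odd parity yields $\1_t\cdot\1_{t'}\stackrel{C_{k+1}+c}{\sim}\1$. For (2), I would treat $i_0=i_1$ (the case $i_{k-1}=i_k$ being symmetric by reversing the ends). When the two extreme leaves lie on the same component, the relevant clause of \cite[Lemma~5.1]{MYftisl}, obtained by sliding one of these coincident leaves along the strand past the other (Lemma~\ref{lem:slide}) and applying the fork move of Lemma~\ref{fork}, shows that $\1_t$ is $C_{k+1}$-trivial modulo surgeries along graph claspers with loops; Lemma~\ref{graph} then upgrades this to $\1_t\stackrel{C_{k+1}+c}{\sim}\1$. (Alternatively, Lemma~\ref{lem:STUc} already packages the STU step in $C_{k+1}+c$ form, which I would use to streamline this reduction.)

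The main obstacle I anticipate is bookkeeping rather than conceptual: one must verify that \emph{every} correction term arising when passing between $t$ and $t'$ in (1), or when collapsing the coincident end-leaves in (2), is genuinely a graph clasper with a loop, so that Lemma~\ref{graph} applies and no residual nontrivial linear-tree surgery of degree $k$ survives outside the asserted $\1_{t'}$ (or its inverse). Tracking the half-twist/AS parity to select the correct alternative in~(1), and confirming that the reduction in~(2) leaves only loop corrections, are the delicate points; both, however, are precisely the data that \cite[Lemma~5.1]{MYftisl} already organizes at the level of $C_{k+1}$-equivalence, so here I only need to reinterpret its loop terms as concordances.
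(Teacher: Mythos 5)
Your proposal is correct and follows essentially the same route as the paper: the paper's entire proof is the one-line observation that the lemma follows ``by combining Lemma~\ref{graph} and \cite[Lemma~5.1]{MYftisl},'' i.e.\ precisely your slogan that the loop-clasper correction terms in \cite[Lemma~5.1]{MYftisl} become concordances. Your expanded bookkeeping of half-twists, parity, and self-crossing changes is a faithful unpacking of what that citation already organizes, so there is no substantive difference in approach.
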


For each sequence $I=(i_0...i_k)$ of integers in $\{1,2\}$,
let $T(I)$ denote the \emph{choice} of a $2$-string link obtained from $\1_2$ by surgery along a linear 
$C_k$-tree with o-index $I$. 
Let also $T^*(I)$ denote the choice of a $2$-string link such that $T(I)\cdot T^*(I)\stackrel{C_{k+1}+c}{\sim} \1_2$. 
In particular, the $2$-string links $T(12)$, $T(121)$, $T(1221)$, $T(12221)$ and $T(21112)$
are chosen to be the string links $I$, $Y$, $H$, $S^1_{id}$ and $S^2_{id}$ introduced in Section \ref{sec:abelian}, respectively. 
We note that by Lemma~\ref{o-index}~(1), there are essentially two choices in $\mathcal{SL}(2) / (C_{k+1}+c)$ 
for each sequence $I$, namely $T(I)$ and $T^*(I)$. 

\subsection{Proofs of Proposition \ref{nonabeliansl} and Theorem \ref{abelian2}~(2) }\label{sec:proof0}

We first show why $\mathcal{SL}(n)/(C_3+c)$ is not abelian for any $n\ge 3$.

Since the $C_k$-concordance implies the $C_m$-concordance if $k>m$, 
this will imply statement (2) of Theorem \ref{abelian2}, and since the $C_k$-equivalence implies the $C_k$-concordance, 
we will also deduce Proposition \ref{nonabeliansl}.

Let $\sigma_1$ and $\sigma_2$ be the Artin generators for the $3$-braid group. 
Then $\sigma_1^2$, $\sigma_1^{-2}$, $\sigma_2^2$ and $\sigma_2^{-2}$ are $3$-string links. 
Since the closure of $\sigma_1^2\sigma_2^2\sigma_1^{-2}\sigma_2^{-2}$ is a copy of the Borromean rings,  
$\sigma_1^2\sigma_2^2\sigma_1^{-2}\sigma_2^{-2}$ has nontrivial Milnor invariant $\mu(123)$ of length 3, 
which is a $C_3$-concordance invariant. 
Hence $\sigma_1^2\sigma_2^2\sigma_1^{-2}\sigma_2^{-2}$ is not $C_3$-concordant to $\1_3$. 
This implies that $\mathcal{SL}(3)/(C_3+c)$ is not commutative : 
suppose indeed that $\sigma_1^2$ and $\sigma_2^2$ commute in $\mathcal{SL}(3)/(C_3+c)$, then 
$\sigma_1^2\sigma_2^2\sigma_1^{-2}\sigma_2^{-2}$ is $C_3$-concordant to $\sigma_2^2\sigma_1^2\sigma_1^{-2}\sigma_2^{-2}=\1_3$, 
which leads to a contradiction.  

\subsection{Abelian cases: $k\le 8$}\label{sec:abelianc}
We now show that $\mathcal{SL}(2)/(C_k+c)$ is abelian for $k\le 8$.

Since the $C_k$-concordance implies the $C_m$-concordance for $k>m$, it is sufficient to show that $\mathcal{SL}(2)/(C_8+c)$ is abelian.  

We use the same strategy as in Section \ref{sec:abelian}, where we showed that $\mathcal{SL}(2)/C_6$ is abelian. 
More precisely,  
we first chose generating sets for the successive quotients $\mathcal{SL}_{i}^c(2)/(C_{i+1}+c)$ ($i\le 7$), 
where $\mathcal{SL}_{i}^c(2)$ is the set of $(C_{i}+c)$-trivial $2$-string links,  
to obtain a set of generators for $\mathcal{SL}(2)/(C_8+c)$, then we show that any two of these generators commute. 
In this discussion, we may again ignore local generators, since they are central in $\mathcal{SL}(2)$.   

Let
\[\mathcal{H}^c_1=\{T(12)\},~\mathcal{H}^c_2=\{T(121)\},~\mathcal{H}^c_3=\{T(1221)\},~\mathcal{H}^c_4=\{T(12221), T(21112)\}\]
\[\textrm{and }\mathcal{H}^c_5=\{T(122221), T(211112), T(121221)\}. \]
In \cite{MYftisl}, the authors show that $\mathcal{H}^c_i$ is a generating set for $\mathcal{SL}_{i}^c(2)/(C_{i+1}+c)$, 
$i\le 5$.\footnote{
In \cite{MYftisl}, $T(121212)$ is chosen instead of $T(121221)$, but these two string links are $C_6$-equivalent by the AS relation.}  

Let us now pick a generating set for $\mathcal{SL}_{6}^c(2)/(C_{7}+c)$. 
By Lemma~\ref{o-index}~(2), a string link obtained from $\1_2$ by surgery along a linear $C_6$-tree with index 
$\{1,2^{(6)}\}$ or $\{1^{(6)},2\}$ is $C_7$-concordant to $\1_2$.  
So it is enough to consider $C_6$-trees with index $\{1^{(2)},2^{(5)}\}$, $\{1^{(3)},2^{(4)}\}$, $\{1^{(4)},2^{(3)}\}$ or $\{1^{(5)},2^{(2)}\}$. 
Furthermore, the IHX relation implies that it is sufficient to consider linear $C_6$-trees. 

By the IHX relation, the ends of a linear $C_6$-trees with index $\{1^{(2)},2^{(5)}\}$ 
can be chosen to be the two $1$-leaves, so that the only possible o-index is $(1222221)$. 
Similarly, we may assume that the ends of a linear $C_6$-trees with index $\{1^{(3)},2^{(4)}\}$ are both $2$-leaves.
Then by Lemma~\ref{o-index} ~(2), the o-index should be of the form $(21ijk12)$ for some $i,j,k\in\{1,2\}$. 
Since the index is $\{1^{(3)},2^{(4)}\}$, we have three possibilities, namely $ijk=212$, $122$, or $221$. 
By definition, the only two possible o-indices are then $(2112212)$ and $(2121212)$. 
Summarizing, we may assume that \\
 (1)~all linear $C_6$-trees with index $\{1^{(2)},2^{(5)}\}$ have o-index $(1222221)$,\\
 (2)~all linear $C_6$-trees with index $\{1^{(3)},2^{(4)}\}$ have o-index $(2121212)$ or $(2112212)$,\\
 (3)~all linear $C_6$-trees with index $\{1^{(4)},2^{(3)}\}$ have o-index $(1212121)$ or $(1211221)$,\\
 (4)~all linear $C_6$-trees with index $\{1^{(5)},2^{(2)}\}$ have o-index $(2111112)$.\\
\noindent (The last two case are deduced from the first two by exchanging 1 and 2.) 

The following lemma is useful to further reduce the number of generators.
\begin{lemma}\label{annoying}
For an integer $k\geq 5$, let $t$ and $t'$ be two linear $C_k$-trees for $\1_2$
whose respective o-indices are either of the form $(ijjiiI)$ and $(ijijiI)$,
where $I$ is a sequence of $k-4$ integers in $\{1,2\}$ and where $\{i,j\}=\{1,2\}$.
Then either $(\1_2)_{t}\stackrel{C_{k+1}+c}{\sim} (\1_2)_{t'}$, or $(\1_2)_{t}\cdot (\1_2)_{t'}\stackrel{C_{k+1}+c}{\sim} \1_2$.
\end{lemma}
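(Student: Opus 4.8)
The plan is to work entirely modulo $C_{k+1}+c$ and to realize the passage from $t$ to $t'$ by a local manipulation supported in a small ball containing the two interior leaves $f_2,f_3$ together with the portion of the spine joining them, since the two trees coincide outside such a ball. The o-indices $(ijjiiI)$ and $(ijijiI)$ differ precisely by the transposition of the entries $i_2$ and $i_3$, so it suffices to effect this single swap of two adjacent interior leaves. The natural tool is the IHX relation (Lemma \ref{asihxstu}) applied at the internal edge joining the two nodes $v_2,v_3$ adjacent to $f_2$ and $f_3$: of the three fusion configurations at that edge, one is $t$ itself, a second is exactly the linear tree $t'$ with the swapped o-index $(ijijiI)$, and the third is a non-linear ``cherry'' $C_k$-tree $t''$ in which a single node carries both an $i$-leaf and a $j$-leaf. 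Thus I would first establish $(\1_2)_t \stackrel{C_{k+1}+c}{\sim} (\1_2)_{t'}\cdot (\1_2)_{t''}$, where the AS relation (Lemma \ref{asihxstu}) together with Lemma \ref{lem:twist} may force $t'$ to appear with a half-twisted edge, i.e.\ as the inverse of $(\1_2)_{t'}$; this is the source of the dichotomy in the conclusion.

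The heart of the argument is then to show that the correction $(\1_2)_{t''}$ is $(C_{k+1}+c)$-trivial. I would do this by separating the two leaves of the cherry, which lie on distinct strands, by sliding them apart along their respective strands (Lemma \ref{lem:slide}) and reorganizing the resulting clasper via the fork relations (Lemma \ref{fork}) and the STU relation up to concordance (Lemma \ref{lem:STUc}). Each such move produces correction terms of strictly higher complexity, so that every term generated is of one of three types: (a) surgery along a $C_m$-tree with $m\ge k+1$, which is $C_{k+1}$-trivial; (b) surgery along a graph clasper containing a loop, which is concordant to $\1$ by Lemma \ref{graph}; or (c) surgery along a linear $C_k$-tree whose o-index repeats at one of its ends, which is $(C_{k+1}+c)$-trivial by Lemma \ref{o-index}(2). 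The hypothesis $k\ge 5$ enters exactly here: it guarantees that the trailing sequence $I$ is nonempty, so that after the local reorganization the residual linear trees are long enough that the pattern $i_3=i_4=i$ present in $t$ (and its mirror in the variant configurations) can be pushed to an \emph{end} of a correction tree, where Lemma \ref{o-index}(2) applies. Collecting the half-twists via Lemma \ref{lem:twist} yields either $(\1_2)_t \stackrel{C_{k+1}+c}{\sim} (\1_2)_{t'}$ or $(\1_2)_t\cdot (\1_2)_{t'}\stackrel{C_{k+1}+c}{\sim}\1_2$.

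The main obstacle I anticipate is the trivialization of the cherry term $(\1_2)_{t''}$: unlike the case of two leaves on a \emph{common} strand, where Lemma \ref{fork} or Lemma \ref{lem:split} gives an immediate simplification, here the two leaves link distinct strands, and one must verify carefully that pulling them apart produces \emph{only} corrections of types (a), (b), (c) and never a new linear $C_k$-tree with a genuinely different \emph{interior} o-index, which would not be controlled by any of the available lemmas. Confirming this, and performing the accompanying orientation bookkeeping through the AS and IHX signs, is the delicate step, and it is precisely the specific form $(ijjiiI)$ versus $(ijijiI)$ of the two o-indices — with the repeated index $i_3=i_4$ adjacent to the swapped block — that makes it go through.
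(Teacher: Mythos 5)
Your first step coincides with the paper's: after a normalization of the embedding (which the paper performs with Lemmas~\ref{lem:STUc} and \ref{lem:twist}), one applies the IHX relation at the internal edge separating the nodes of $f_2$ and $f_3$, obtaining $(\1_2)_t\stackrel{C_{k+1}}{\sim}L\cdot S$, where $L$ is, up to a half-twisted edge (whence the dichotomy), the linear tree with o-index $(ijijiI)$, and $S$ is your ``cherry'' term. The gap lies precisely in the step you single out as the heart of the matter: the $(C_{k+1}+c)$-triviality of $S$. Your plan --- pulling the cherry leaves apart via Lemmas~\ref{lem:slide}, \ref{fork} and \ref{lem:STUc} so that every correction is of type (a) higher degree, (b) looped, or (c) a linear $C_k$-tree with a repeated index at an end --- does not go through. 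The fork/split lemmas require two leaves of a common node grabbing the \emph{same} strand, which (as you admit) is not the case here; the IHX resolutions of $S$ at the two edges adjacent to the cherry and to the end pair $(f_0,f_1)$ merely return $t$ and $t'$ up to inversion, so they are circular; and any further IHX resolution produces $C_k$-trees that are never of type (c): for $k\geq 6$ they are not even linear (they contain a node with three node-neighbours), and for $k=5$ their o-indices have an $\{i,j\}$ pair at each end, hence no repeated entry there, so Lemma~\ref{o-index}~(2) never applies. The triviality of $S$ is not a term-by-term phenomenon, and no sequence of the moves you cite reduces it to one.

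What is missing is a \emph{pairwise cancellation}, and this is exactly what the paper does (Figures~\ref{fig:t} and \ref{fig:t'}). Apply IHX to $S$ at the internal edge just beyond the cherry, on the side of $f_4$ (this is where $k\geq 5$ enters: that edge exists only if the branch beyond the cherry contains a node). This gives $S\stackrel{C_{k+1}}{\sim}U\cdot V$, where $U$ and $V$ are the two resolutions. Because the end pair $(f_0,f_1)$ carries labels $(i,j)$ while the cherry $(f_2,f_3)$ carries labels $(j,i)$ --- precisely the $(ijjii)$ pattern in the hypothesis --- the abstract strand-labelled tree underlying $V$ is obtained from that of $U$ by exchanging these two branches, i.e.\ by a single AS move. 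Crossing changes of edges (Lemma~\ref{lem:cc}) and re-ordering of leaves along the strands up to concordance (Lemma~\ref{lem:STUc}, resting on Lemma~\ref{graph}) realize this identification geometrically, so $V$ is $C_{k+1}$-concordant to the AS-inverse $W$ of $U$, and then $U\cdot W\stackrel{C_{k+1}}{\sim}\1_2$ by the AS relation. Without this cancellation your argument cannot dispose of the cherry term; correspondingly, your explanation of the role of $k\geq 5$ (pushing $i_3=i_4$ to an end so as to invoke Lemma~\ref{o-index}~(2)) is not the actual mechanism.
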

\begin{proof}
Suppose that o-index$(t)$ is of the form $(ijjiiI)$.  
By Lemmas \ref{lem:STUc} and \ref{lem:twist}, we can assume without loss of generality that there a $3$-ball 
which intersects $\1_2\cup t$ as shown on the left-hand side of Figure \ref{fig:t}. 
Using Lemma \ref{lem:twist} again, we have that $(\1_2)_t$ is $C_{k+1}$-equivalent to $(\1_2)_{t'}$, 
where $t'$ is shown in the figure. 

By the IHX relation, $(\1_2)_{t'}$ is $C_{k+1}$-equivalent to 
$L\cdot S$, where $L$ and $S$ are string links as illustrated in Figure~\ref{fig:t}. 
Notice that, by Lemma \ref{lem:twist}, $L$ is $C_{k+1}$-equivalent 
to a string link obtained from $\1_2$ by surgery along a $C_k$-trees with o-index $(ijijiI)$. 
So, proving that $S$ is $(C_{k+1}+c)$-trivial would imply Lemma \ref{annoying}. 
\begin{figure}[!h]
   \includegraphics{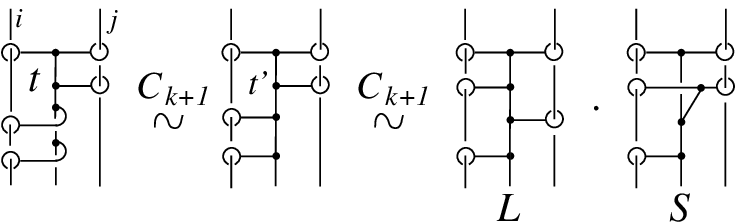}
   \caption{ }\label{fig:t}
  \end{figure}

\noindent We now show that $S$ is indeed $(C_{k+1}+c)$-trivial. 
The proof is given in Figure~\ref{fig:t'} as follows. 
By Lemma \ref{lem:twist} and the IHX relation, we have $S\stackrel{C_{k+1}}{\sim} U\cdot V$, where $U$ and $V$ are as shown, 
and by Lemmas~\ref{lem:cc} and \ref{lem:STUc}, $V$ is $C_{k+1}$-concordant to the string link $W$ represented on the right-hand side.
   \begin{figure}[!h]
   \includegraphics{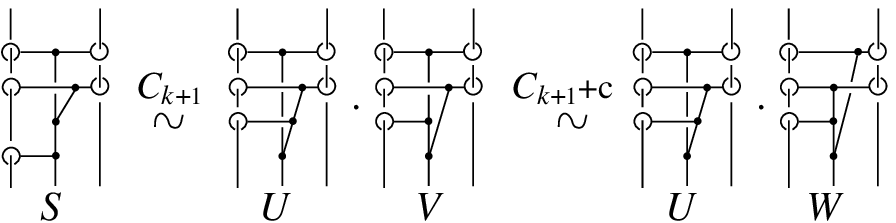}
   \caption{ }\label{fig:t'}
  \end{figure}

\noindent It then follows from the AS relation that $U\cdot W\stackrel{C_{k+1}}{\sim} \1_2$, which concludes the proof.
\end{proof}
\begin{remark}
The same result holds if  $t$ and $t'$ have respective o-indices $(Iiijji)$ and $(Iijiji)$. 
\end{remark}

Using this, we can assume that all linear $C_6$-trees with index $\{1^{(3)},2^{(4)}\}$ and $\{1^{(4)},2^{(3)}\}$ 
have o-index $(2121212)$ and $(1212121)$, respectively. It follows that 
\[\mathcal{H}^c_6=\{T(1222221),~T(1212121),~T(2121212), ~T(2111112)\}\] 
is a generating set for $\mathcal{SL}_{6}^c(2)/(C_{7}+c)$. 

Similarly, we have the following generating set for $\mathcal{SL}_{7}^c(2)/(C_{8}+c)$:
\[\mathcal{H}^c_7=\left\{
\begin{array}{l}
T(12222221),~T(21111112),~T(12111221),\\
T(21122212),~T(12211221),~T(12112221)
\end{array}\right\}.\]

In the following, we call \emph{generator of degree $k$} ($k\le 7$) any element $T(I)$ of the generating set 
$\mathcal{H}^c_k$ or its inverse $T^*(I)$ in $\mathcal{SL}_{k}^c(2)/(C_{k+1}+c)$.

In order to prove that $\mathcal{SL}(2)/(C_8+c)$ is abelian, it suffices to show that any two generators of degree $\le 7$ 
do commute in $\mathcal{SL}(2)/(C_8+c)$.

By Lemma~\ref{lem:I}, both $T(12)$ and $T^*(12)$ are central in $\mathcal{SL}(2)$. 
Moreover by Remark~\ref{rem:commute},  
two generators of degrees $k$ and $l$ commute in $\mathcal{SL}(2)/(C_8+c)$ if $k+l\geq 8$. 
Hence it is enough to check the commutativity of generators of degrees $k$ and $l$ for  
$(k,l)=(3,4),(2,5),(3,3),(2,4),(2,3),(2,2)$.

In order to prove the commutativity in the case $(3,4)$, we need the following. 
\begin{lemma}\label{o-index2}
Let $t$ be a linear $C_k$-tree $(k\geq 1)$ for $\1_n$ with $k$ odd.   
Then $(\1_n)_{t}$ is $C_{k+1}$-concordant 
to its image under orientation-reversal of all strings. 
\end{lemma}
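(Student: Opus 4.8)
The plan is to identify the orientation-reversed string link as surgery along a linear $C_k$-tree with the same ordered index, and then to appeal to Lemma \ref{o-index}(1), using a length-$(k+1)$ Milnor invariant to pin down which of the two resulting alternatives occurs.

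First I would observe that, since reversing the orientation of all strings leaves the underlying embedded surface $t$ unchanged and only reorients the ambient strands, the image of $(\1_n)_t$ under orientation-reversal is again a string link of the form $(\1_n)_{\ov t}$, where $\ov t$ is a linear $C_k$-tree for $\1_n$ (recall that $\1_n$ is symmetric under orientation-reversal). Moreover, the ordered index is computed purely from the combinatorial data of which leaf meets which strand and in which order along $\partial t$, none of which is affected by reorienting the strands; hence o-index$(\ov t)=$ o-index$(t)=(i_0\cdots i_k)$.

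Next, if o-index$(t)$ satisfies $i_0=i_1$ or $i_{k-1}=i_k$, then by Lemma \ref{o-index}(2) both $(\1_n)_t$ and $(\1_n)_{\ov t}$ are $(C_{k+1}+c)$-trivial, and the statement is immediate. Otherwise, Lemma \ref{o-index}(1) applies to the pair $t,\ov t$ and yields two possibilities: either $(\1_n)_t\stackrel{C_{k+1}+c}{\sim}(\1_n)_{\ov t}$, which is exactly the desired conclusion, or $(\1_n)_t\cdot(\1_n)_{\ov t}\stackrel{C_{k+1}+c}{\sim}\1_n$.

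To exclude the second possibility I would use the Milnor invariant $\mu(i_0\cdots i_k)$ of length $k+1$, which is a $C_{k+1}$-concordance invariant and which, by the standard correspondence between linear tree claspers and Milnor invariants, takes the value $\pm 1$ (in particular is nonzero) on $(\1_n)_t$ in the non-degenerate case above. On one hand, reversing the orientation of all strings multiplies $\mu(i_0\cdots i_k)$ by $(-1)^{k+1}$, which equals $+1$ since $k$ is odd; hence $\mu(i_0\cdots i_k)\bigl((\1_n)_{\ov t}\bigr)=\mu(i_0\cdots i_k)\bigl((\1_n)_t\bigr)$. On the other hand, in the second alternative the length-$(k+1)$ invariant of the product $(\1_n)_t\cdot(\1_n)_{\ov t}$ vanishes, so additivity of the lowest nonvanishing Milnor invariant under the stacking product gives $\mu(i_0\cdots i_k)\bigl((\1_n)_{\ov t}\bigr)=-\mu(i_0\cdots i_k)\bigl((\1_n)_t\bigr)$. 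These two equalities together force $\mu(i_0\cdots i_k)\bigl((\1_n)_t\bigr)=0$, contradicting its nonvanishing. Thus only the first alternative can hold, which proves the lemma. The step requiring the most care is the sign bookkeeping for the Milnor invariant: verifying both that orientation-reversal of all strands scales $\mu$ by $(-1)^{\mathrm{length}}$ and that it changes sign under the stacking inverse at the lowest-degree term, together with the nonvanishing of the length-$(k+1)$ invariant on a single linear-tree surgery.
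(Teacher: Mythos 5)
Your overall strategy --- realize the orientation-reversed link as surgery along a linear tree $\ov t$ with the same ordered index, invoke the dichotomy of Lemma \ref{o-index}(1), and exclude the bad alternative with a length-$(k+1)$ Milnor invariant --- is genuinely different from the paper's proof, which is purely geometric: there one rotates the defining tree by $180$ degrees about an axis fixing its leaves, slides leaves back into position, and absorbs the resulting half-twists using Lemmas \ref{lem:STUc} and \ref{lem:twist}, the number of half-twists being even precisely because $k$ is odd. Your reduction to the dichotomy is fine, and your sign rule is in fact correct: for first non-vanishing invariants one has $\mu_{\ov L}(J)=(-1)^{|J|}\mu_L(J)$, so even length ($k$ odd) gives invariance. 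But the whole argument hinges on the claim that $\mu_{i_0\cdots i_k}\bigl((\1_n)_t\bigr)=\pm 1$ (or at least $\neq 0$) for every non-degenerate o-index, justified only by ``the standard correspondence between linear tree claspers and Milnor invariants'', and this is a genuine gap.

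That correspondence does not give $\pm1$ when the index has repeated entries: it gives a \emph{signed sum} of contributions, one for each leaf of $t$ lying on the strand named by the last entry of the sequence, and these contributions can cancel. A concrete illustration: for the degree-$2$ tree with o-index $(121)$ (the Whitehead string link $Y=T(121)$), which is non-degenerate in your sense, \emph{all} length-$3$ Milnor invariants vanish identically --- the shuffle relation forces $\mu(112)=0$, and cyclic symmetry then kills $\mu(121)$ and $\mu(211)$ --- so your pinning-down step would have nothing to work with. This example has even degree, outside the lemma's hypothesis, but it shows the cited justification is invalid; what you actually need --- that for every $n$, every odd $k$ and every non-degenerate o-index $I$, the string link $T(I)$ has \emph{some} non-vanishing Milnor invariant of length $k+1$ --- is a substantive structural statement (non-vanishing of the corresponding colored linear tree modulo AS and IHX), proved neither by you nor anywhere in the paper at this stage. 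Worse, facts of this type for $2$-string links (e.g.\ that $\mathcal{SL}_{7}^c(2)/(C_{8}+c)$ is free abelian and detected by Milnor invariants) are established in the paper \emph{after}, and partly by means of, this very lemma, so quoting them would risk circularity. If the needed invariants all vanish --- as happens for torsion-type elements; compare the trefoil, which is a single $C_2$-tree surgery generating $\mathcal{SL}(1)/(C_3+c)\cong\mathbb{Z}/2\mathbb{Z}$ yet has no Milnor invariants at all --- then Lemma \ref{o-index}(1) leaves both alternatives open and your proof stops. A smaller point: in the degenerate case you invoke Lemma \ref{o-index}(2), which is stated only for $k\geq 3$, so the case $k=1$ with $i_0=i_1$ needs a separate (easy) argument.
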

A proof is easily obtained by combining Lemma~\ref{graph} and arguments similar to those in the proof of \cite[Lemma~5.1~(3)]{MYftisl}.
Here, let us only illustrate the general idea on an example.  
Consider the linear $C_7$-tree $t$ for $\1_2$ illustrated on the left-hand side of Figure \ref{flip}.  
Let also $t'$ be the linear $C_7$-tree for $\1_2$ illustrated on the right-hand side of the figure.  
(notice that both $t$ and $t'$ have o-index $(12112221)$.)
On one hand, the two string links obtained by surgery along $t$ and $t'$ 
are obtained from one another by reversing the orientation of all strings. 
On the other hand, by Lemma \ref{lem:twist} we have that $(\1_2)_{t}$ is $C_8$-equivalent to $(\1_2)_c$, 
where $c$ is the $C_7$-tree shown in the figure.   
Let $\tilde{c}$ be obtained by a $180$-degree rotation of $c$ around the axis $a$ fixing the leaves, see Figure \ref{flip}.  
 \begin{figure}[!h]
  \includegraphics[scale=0.85]{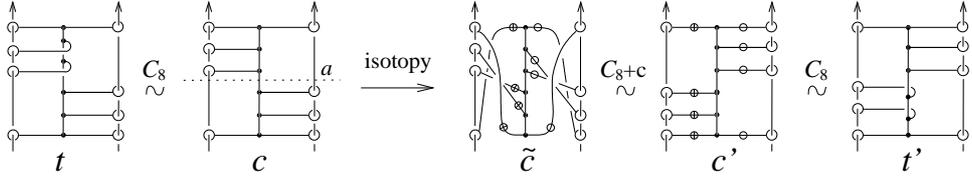}
  \caption{The two string links obtained by surgery along $t$ and $t'$ are $C_8$-concordant.} \label{flip}
 \end{figure} 
By sliding the leaves of $\tilde{c}$ repeatedly, we can deform it into the $C_7$-tree $c'$ shown in Figure \ref{flip}, 
which only differs from $t'$ by an even number of half-twists on its edges. 
By Lemmas \ref{lem:STUc} and \ref{lem:twist}, we obtain that 
$(\1_2)_{t}$ and $(\1_2)_{t'}$ are $C_8$-concordant.
(In the general case, the fact that the degree is odd ensures that there is an even number of half-twists.) 

We can now prove the desired commutativity property in the case $(k,l)=(3,4)$.

\medskip
{\bf Case $(3,4)$}:
We first show that $T(1221)$ and $T(12221)$ commute in $\mathcal{SL}(2)/(C_8+c)$.
Since both string links are symmetric, we note that $T(12221)\cdot T(1221)$ is obtained from $T(1221)\cdot T(12221)$ by 
orientation-reversal of both strings.
By Lemma~\ref{lem:slide}, $T(1221)\cdot T(12221)$ is $C_8$-equivalent to 
$(\1_2)_{t\cup s}\cdot L$, where $t$ and $s$ are tree claspers for $\1_2$ as illustrated in 
Figure \ref{fig:sym}, and where $L$ is a string link obtained from $\1_2$ by surgery along some $C_7$-trees. 
   \begin{figure}[!h]
   \includegraphics{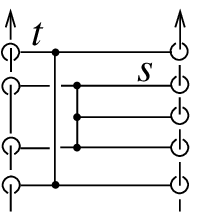}
   \caption{ }\label{fig:sym}
  \end{figure}
Since $L$ is $C_8$-concordant to a product $L'$ of generators of degree 7, we have that 
$T(1221)\cdot T(12221)$ is $C_8$-concordant to $(\1_2)_{t\cup s}\cdot L'$.
Hence $T(12221)\cdot T(1221)$ is $C_8$-concordant to the $2$-string link 
$(\1_2)_{t\cup s}\cdot L'$ with orientation reversed.  
On the other hand, by Lemma~\ref{lem:cc}, $(\1_2)_{t\cup s}$ is $C_8$-equivalent to its image under orientation-reversal, and 
by Lemma~\ref{o-index2}, each generator of degree 7 is $C_8$-concordant to its image under orientation-reversal.  
It follows that $T(1221)\cdot T(12221)$ and $T(12221)\cdot T(1221)$ are $C_8$-concordant.\\
\noindent The fact that $T(1221)$ and $T(21112)$ commute in $\mathcal{SL}(2)/(C_8+c)$ is shown completely similarly, 
as well as the other cases 
(namely, the commutativity of $T(1221)$ with $T^*(12221)$ and $T^*(21112)$, 
and the commutativity of $T^*(1221)$ with $T(12221)$, $T(21112)$, $T^*(12221)$ and $T^*(21112)$).  

Before we deal with the remaining cases, we make an observation. 
Since the group $\mathcal{SL}_{7}^c(2)/(C_{8}+c)$ is generated by the 6 elements of $\mathcal{H}^c_7$, 
and since there are 6 independent Milnor invariants of length $8$ \cite[Appendix B]{cochran}, 
we have that $\mathcal{SL}_{7}^c(2)/(C_{8}+c)$ is a free abelian group with rank 6 -- and in particular, has no $2$-torsion.
Moreover, by Claim~\ref{lem:ycentral},  
both $T(121)^2(=Y^2)$ and $T^*(121)^2(=(Y^*)^2)$ are $C_3$-equivalent to a central element in $\mathcal{SL}(2)$. 
So we get the following as an application of the `$C_k$-concordance version' of Lemma~\ref{lem:commute2}.
\begin{lemma}\label{lem:commute3}
Let $L$ be either $T(121)$ or $T^*(121)$. 
Suppose that $L$ satisfies the following conditions for some integers $l$ and $m$ ($l<m<8$) : 
\begin{itemize}
\item $L$ commutes with any $(C_l+c)$-trivial string link in $\mathcal{SL}(n)/(C_m+c)$, 
\item $L$ commutes with any $(C_m+c)$-trivial string link in $\mathcal{SL}(n)/(C_8+c)$, 
\end{itemize}
Suppose moreover that $(C_3+c)$-trivial and $(C_l+c)$-trivial string links commute in $\mathcal{SL}(n)/(C_8+c)$. 
Then 
$L$ commutes with any $(C_l+c)$-trivial string link in $\mathcal{SL}(n)/(C_{8}+c)$.  
\end{lemma}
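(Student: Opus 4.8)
The plan is to mirror the proof of Lemma~\ref{lem:commute2} line for line, replacing the $C_k$-equivalence relation $\stackrel{C_k}{\sim}$ by the $C_k$-concordance relation $\stackrel{C_k+c}{\sim}$ throughout. Set $L$ to be $T(121)$ or $T^*(121)$. By Claim~\ref{lem:ycentral}, $L^2$ is $C_3$-equivalent to a central element $C$ in $\mathcal{SL}(2)$; since $C_3$-equivalence implies $C_3$-concordance, $L^2$ is $(C_3+c)$-equivalent to $C$, and hence $(C_8+c)$-equivalent to $C\cdot H$ for some $(C_3+c)$-trivial string link $H$ (by the same argument as in Remark~\ref{rem:trivial}, which applies verbatim to the concordance setting). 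This plays the role of the first hypothesis of Lemma~\ref{lem:commute2} with $p=3$.

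First I would fix a $(C_l+c)$-trivial string link $G$ and run the commutator computation. Using centrality of $C$ together with the hypothesis that $(C_3+c)$-trivial and $(C_l+c)$-trivial string links commute in $\mathcal{SL}(n)/(C_8+c)$, I obtain
\[ G\cdot L^2 \stackrel{C_8+c}{\sim} G\cdot C\cdot H \stackrel{C_8+c}{\sim} C\cdot H\cdot G \stackrel{C_8+c}{\sim} L^2\cdot G, \]
so that $[L^2,G]$ is $(C_8+c)$-trivial. Expanding $[L^2,G]$ as $L[L,G]L^{-1}[L,G]$ gives $\1_n \stackrel{C_8+c}{\sim} L[L,G]L^{-1}[L,G]$, exactly as in the equivalence case.

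Next I would use the second hypothesis, namely that $L$ commutes with every $(C_m+c)$-trivial string link in $\mathcal{SL}(n)/(C_8+c)$, combined with the first hypothesis that $L$ commutes with every $(C_l+c)$-trivial string link in $\mathcal{SL}(n)/(C_m+c)$. The latter forces $[L,G]$ to be $(C_m+c)$-trivial, whence $L$ commutes with $[L,G]$ modulo $C_8$-concordance; substituting $L[L,G]L^{-1}\stackrel{C_8+c}{\sim}[L,G]$ collapses the relation above to $\1_n \stackrel{C_8+c}{\sim} [L,G]^2$. Finally I would invoke the no-$2$-torsion property of $\mathcal{SL}_7^c(2)/(C_8+c)$, which was established just before the lemma (it is free abelian of rank $6$). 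Since $[L,G]$ is $(C_7+c)$-trivial and its square is $(C_8+c)$-trivial, the absence of $2$-torsion in this quotient forces $[L,G]$ to be $(C_8+c)$-trivial, i.e.\ $L$ and $G$ commute in $\mathcal{SL}(n)/(C_8+c)$.

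The only real subtlety, rather than a genuine obstacle, is bookkeeping of which torsion-free group is being used at the very last step: Lemma~\ref{lem:commute2} assumed $\mathcal{SL}(n)/C_k$ has no $2$-torsion outright, whereas here one only has the torsion-freeness of the successive quotient $\mathcal{SL}_7^c(2)/(C_8+c)$. I would make sure the conclusion $[L,G]$ being $(C_7+c)$-trivial places it exactly in that quotient, so that the no-$2$-torsion statement applies directly; all the other steps are purely algebraic and transfer without change from the $C_k$-equivalence proof, since Lemmas~\ref{lem:slide}, \ref{lem:cc} and the structure of Remark~\ref{rem:commute} have their evident $C_k$-concordance analogues.
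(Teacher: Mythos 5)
Your strategy is exactly the one the paper intends: transport the proof of Lemma~\ref{lem:commute2} to the concordance setting, with Claim~\ref{lem:ycentral} supplying the hypothesis on $L^2$ (so $p=3$) and the freeness of $\mathcal{SL}_7^c(2)/(C_8+c)$ supplying the torsion input. The reduction down to $\1_2\stackrel{C_8+c}{\sim}[L,G]^2$ is correct and matches the paper's argument step for step.

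The final step, however, contains a genuine gap, and it is not bookkeeping. What your argument actually establishes is that $[L,G]$ is $(C_m+c)$-trivial; your assertion that it is $(C_7+c)$-trivial does not follow, because $(C_m+c)$-triviality is \emph{weaker} than $(C_7+c)$-triviality when $m<7$ (the implication between these relations goes downward in the index, not upward). The lemma allows any $m<8$, and the paper applies it with $m=7$ (Case~(2,5)), $m=6$ (Case~(2,4)) and $m=5$ (Case~(2,3)); your proof is complete only in the first of these cases. The point is essential here, unlike in Lemma~\ref{lem:commute2}, because one cannot fall back on absence of $2$-torsion in the ambient group: $\mathcal{SL}(2)/(C_8+c)$ \emph{does} have $2$-torsion --- a local knot with Arf invariant one gives an element of order two, since $\mathcal{SL}(1)/(C_k+c)\cong\mathbb{Z}/2\mathbb{Z}$ for $k\geq 3$ \cite{ng,MYftisl}. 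Nor can one descend through the intermediate graded quotients: torsion-freeness is known only for $\mathcal{SL}_7^c(2)/(C_8+c)$, and it genuinely fails to be available one level down, since there are no rational Milnor invariants of length $7$ for $2$-string links, so $\mathcal{SL}_6^c(2)/(C_7+c)$ has rank zero and is generated by torsion elements (compare Lemma~\ref{o-index} and the fact that $|\mathcal{H}^c_6|=4$ while the count of independent length-$7$ invariants is $0$). Thus for $m=6,5$ one must still prove that $[L,G]$ is $(C_7+c)$-trivial --- equivalently, that $L$ and $G$ already commute in $\mathcal{SL}(2)/(C_7+c)$ --- and this requires input beyond the stated hypotheses; it is precisely the point that the paper's own one-sentence justification (``an application of the $C_k$-concordance version of Lemma~\ref{lem:commute2}'') also leaves implicit, so your write-up cannot close it by declaring the remaining work to be a transfer of purely algebraic steps.
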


We can now prove the desired commutativity property in the remaining cases $(k,l)=(2,5),(3,3),(2,4),(2,3),(2,2)$.
From now on, let $L$ be either $T(121)$ or $T^*(121)$. 

{\bf Case $(2,5)$}: 
By Remark~\ref{rem:commute}, $L$ commutes with any $C_5$-trivial string link in $\mathcal{SL}(2)/C_7$ 
and with any $C_7$-trivial string link in $\mathcal{SL}(2)/C_8$. Also, 
any $C_3$-trivial string link commutes with any $C_5$-trivial string link in $\mathcal{SL}(2)/C_8$. 
By applying Lemma~\ref{lem:commute3} for $l=5$ and $m=7$, we have that $L$ commutes with any $(C_5+c)$-trivial string link in
$\mathcal{SL}(2)/(C_8+c)$.

{\bf Case $(3,3)$}:  We already showed that 
$T(1221)$ commutes with any $(C_4+c)$-trivial string link (Case $(3,4)$).
Since $T(1221)$ is $C_3$-trivial and $T(1221)\cdot T^*(1221)$ is $C_4$-trivial,  
by Lemma~\ref{lem:commute1'}, we have that 
$T(1221)$ and $T^*(1221)$ commute in $\mathcal{SL}(2)/(C_8+c)$.

{\bf Case $(2,4)$}: 
Again by Case $(3,4)$, we know that 
any $(C_3+c)$-trivial string link commutes with any $(C_4+c)$-trivial string link in $\mathcal{SL}(2)/(C_8+c)$. 
By Remark~\ref{rem:commute}, 
$L$ commutes with any $C_4$-trivial string link in $\mathcal{SL}(2)/C_6$, and 
with any $C_6$-trivial string link in $\mathcal{SL}(2)/C_8$. 
By applying Lemma~\ref{lem:commute3} for $l=4$ and $m=6$, we thus have that 
$L$ commutes with any $(C_4+c)$-trivial string link in $\mathcal{SL}(2)/(C_8+c)$.

{\bf Case $(2,3)$}: We already showed in Case $(2,5)$ above that 
$L$ commutes with any $(C_5+c)$-trivial string link in $\mathcal{SL}(2)/(C_8+c)$. 
By Remark~\ref{rem:commute}, 
$L$ commutes with any $C_3$-trivial string link in $\mathcal{SL}(2)/C_5$. 
By Case $(3,3)$, any $C_3$-trivial string link commutes with any $C_3$-trivial string link in $\mathcal{SL}(2)/C_8$. 
By applying Lemma~\ref{lem:commute3} for $l=3$ and $m=5$, we have that 
 $L$ commutes with any $(C_3+c)$-trivial link in $\mathcal{SL}(2)/(C_8+c)$.

{\bf Case $(2,2)$}: 
Note that we already showed that 
$T(121)$ commutes with any $(C_3+c)$-trivial string link (Case $(2,3)$). 
Since $T(121)$ is $C_2$-trivial and $T(121)\cdot T^*(121)$ is $C_3$-trivial,  
we can use Lemma~\ref{lem:commute1'} to show that $T(121)$ and $T^*(121)$ commute in $\mathcal{SL}(2)/(C_8+c)$.

This concludes the proof that $\mathcal{SL}(2)/(C_k+c)$ is abelian for all $k\leq 8$.

\subsection{Nonabelian case}\label{sec:ruby}

Let $A$, $\overline{A}$, $B$ and $\overline{B}$ be 2-string links as illustrated in 
Figure~\ref{fig:abba}.
  \begin{figure}[!h]
\includegraphics[trim=0mm 0mm 0mm 0mm, width=.6\linewidth]{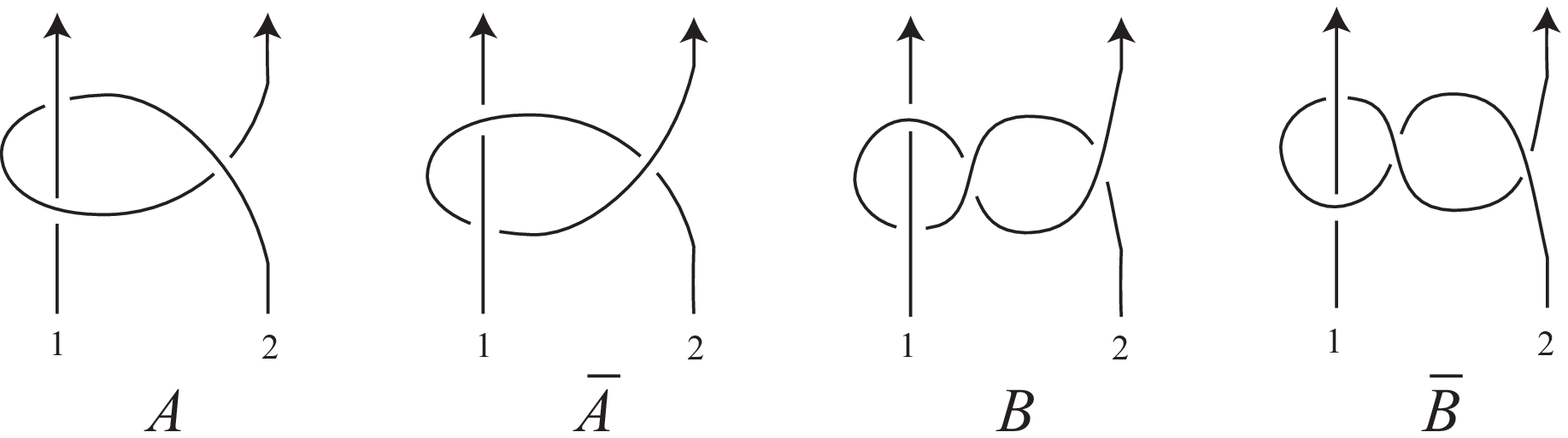}
  \caption{}\label{fig:abba}
 \end{figure}
The following implies that $\mathcal{SL}(2)/(C_k+c)$ is not abelian for all $k\ge 9$.
\begin{proposition}
The two string links $A$ and $B$ do not commute in $\mathcal{SL}(2)/(C_9+c)$.
\end{proposition}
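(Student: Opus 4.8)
The plan is to distinguish the products $A\cdot B$ and $B\cdot A$ in $\mathcal{SL}(2)/(C_9+c)$ by exhibiting a Milnor invariant of length $9$ that takes different values on the two closures, or equivalently on the two string links. Recall from Section~\ref{sec:ckc} that, by the work of Habegger--Masbaum, all rational finite type concordance invariants of string links are Milnor invariants, and that by \cite[Thm.~7.1]{H} Milnor invariants of length $\le 9$ are $C_9$-concordance invariants. Thus to prove that $A$ and $B$ do not commute in $\mathcal{SL}(2)/(C_9+c)$, it suffices to find a length-$9$ Milnor invariant $\mu$ such that $\mu(A\cdot B)\neq \mu(B\cdot A)$; this would immediately show that $A\cdot B$ and $B\cdot A$ are not $C_9$-concordant, hence that $A$ and $B$ do not commute in the quotient group.

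First I would read off, from Figure~\ref{fig:abba}, explicit presentations of the string links $A$ and $B$. Since $A$, $\overline{A}$, $B$, $\overline{B}$ are displayed together, I expect $A$ and $B$ to be built from linear tree claspers (or equivalently from iterated commutators of meridians) whose leading Milnor invariants are concentrated near length $4$ or $5$, so that the relevant nonvanishing commutator term in $A\cdot B\cdot (B\cdot A)^{-1}$ first appears in length $9$. The commutator $[A,B]=A\cdot B\cdot A^{-1}\cdot B^{-1}$ in $\mathcal{SL}(2)/(C_9+c)$ is governed, at the level of Milnor invariants, by the Magnus expansion: the longitudes of the product have Magnus expansions obtained by multiplying those of the factors, and the length-$9$ Milnor numbers of $[A,B]$ are computed from the degree-$9$ part of the formal bracket of the Magnus expansions of the longitudes of $A$ and $B$.

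The concrete computation I would carry out is therefore the following. I would compute the Magnus expansions of the longitudes of $A$ and of $B$ up to degree $9$ in the free group on the two meridians $x_1,x_2$, multiply them in the appropriate order to obtain the longitudes of $A\cdot B$ and of $B\cdot A$, and extract the length-$9$ Milnor coefficients. The key point is to identify a single coefficient, say $\mu_I(A\cdot B)-\mu_I(B\cdot A)$ for some length-$9$ multi-index $I\in\{1,2\}^9$, which is nonzero. Because this is a commutator computation, all length-$<9$ contributions cancel and the difference reduces to a bracket of the leading (lower-length) Magnus terms of $A$ and $B$; I expect exactly one such bracket to survive and to give an explicit nonzero integer. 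Since the acknowledgments credit Tetsuji Kuboyama with a computer program for Milnor invariants used in Section~\ref{sec:ruby}, I anticipate that this final coefficient extraction is verified by machine, and I would present the resulting nonzero value as the conclusion.

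The main obstacle will be the bookkeeping of the length-$9$ Magnus expansion: there are $2^9$ potential multi-indices, and the cancellation of all lower-length and most length-$9$ terms must be organized carefully so that one is left with a manageable, manifestly nonzero expression. The conceptual content is light — it is just the standard fact that Milnor invariants obstruct commutativity up to the relevant length, combined with their concordance invariance — but the delicate part is choosing the string links $A,B$ (as the figure presumably does) so that the first obstruction to commuting lands precisely at length $9$ rather than being killed earlier, and then certifying the single surviving nonzero coefficient. I would lean on the computer verification to discharge this final arithmetic check.
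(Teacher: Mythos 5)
Your proposal is correct and is essentially the paper's own argument: the paper likewise invokes the $C_9$-concordance invariance of length-$9$ Milnor invariants and uses Kuboyama's program (Milnor's Magnus-expansion algorithm) to exhibit nonvanishing length-$9$ coefficients, the only cosmetic difference being that the paper computes the expansion of the single commutator $AB\overline{A}\overline{B}$ and shows it is nontrivial, rather than comparing $\mu_I(A\cdot B)$ with $\mu_I(B\cdot A)$ directly. These two formulations are equivalent given concordance invariance, so no genuinely different route is involved.
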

\begin{proof}
The proof relies on a direct computation, using a program\footnote{
This program uses the programming language \texttt{Ruby}, and was written by Professor Tetsuji Kuboyama.} 
based on the algorithm given by Milnor in \cite{Milnor2} to compute $\mu$-invariants. 
We have that the Magnus expansion of the $1st$ longitude of $AB\overline{A}\overline{B}$ is 
\[\begin{array}{l}
1
-XXXYXYYY+XXXYYYXY+5XXYXXYYY \\
-6XXYXYXYY-XXYXYYYX+XXYXYYYY+6XXYYXYXY \\
-5XXYYXYYY-5XXYYYXXY+XXYYYXYX+5XXYYYXYY \\
-XXYYYYXY-5XYXXXYYY+9XYXXYXYY-9XYXXYYXY \\
+5XYXXYYYX-6XYXYXYYX+6XYXYXYYY+9XYXYYXXY \\
-9XYXYYXYY-XYXYYYXX+XYXYYYYX-9XYYXYXXY \\
+6XYYXYXYX+9XYYXYYXY-5XYYXYYYX+5XYYYXXXY \\
-5XYYYXXYX+XYYYXYXX-6XYYYXYXY+5XYYYXYYX \\
-XYYYYXYX+YXXXXYYY+6YXXXYYXY-5YXXXYYYX \\
-YXXXYYYY-9YXXYXXYY+9YXXYXYYX-9YXXYYXYX \\
+9YXXYYXYY+5YXXYYYXX-6YXXYYYXY+6YXYXXXYY \\
-6YXYXXYYY-6YXYXYYXX+6YXYXYYYX-6YXYYXXXY \\
+9YXYYXXYX-9YXYYXYYX-YXYYYXXX+6YXYYYXXY \\
+YXYYYYXX-6YYXXXYXY+5YYXXXYYY+9YYXXYXXY \\
-9YYXXYXYY-9YYXYXXYX+9YYXYXXYY+6YYXYXYXX \\
-9YYXYYXXY+9YYXYYXYX-5YYXYYYXX-YYYXXXXY \\
+5YYYXXXYX-5YYYXXXYY-5YYYXXYXX+6YYYXXYXY \\
+YYYXYXXX-6YYYXYXYX+5YYYXYYXX+YYYYXXXY \\
-YYYYXYXX+(\text{higher degree terms}),
\end{array}\] 
\noindent where the Magnus expansion is defined here by sending
the first meridian to $1+X$ and the second meridian to $1+Y$. 
Hence some Milnor invariants of length 9 do not vanish on the string link $AB\overline{A}\overline{B}$. 
This implies that 
$AB\overline{A}\overline{B}$ is not $C_9$-concordant to $\1_2$. 
Now, if $A$ commutes with $B$ in $\mathcal{SL}(2)/(C_9+c)$, 
then $AB\overline{A}\overline{B}$ is $C_9$-concordant to 
$BA\overline{A}\overline{B}$, which is concordant to $\1_2$. 
This is a contradiction.
\end{proof}

\appendix

\section{Proofs of Corollaries \ref{cor1} and \ref{cor2}}\label{app}

In this short appendix, we briefly outline the proofs of Corollaries \ref{cor1} and \ref{cor2}. 
That is, we show how the abelian group struncture on the set of $C_k$-equivalence classes 
(resp. $C_k$-concordance classes) of $2$-string links implies the Goussarov-Habiro conjecture at the corresponding degree. 
As explained in the introduction, this is merely an adaptation of Habiro's argument for proving Theorem \ref{cnknots}, so 
we do not reproduce here all the details of this proof, but give precise references to the technical results from Habiro's paper 
that are used in this arguments, and emphasize the role of the commutativity of $\mathcal{SL}(n)/C_{k+1}$.

For simplicity, we only give the arguments for Corrollary \ref{cor1}. 
They are easily adapted to the case of $C_k$-concordance and finite type concordance invariants to obtain Corrollary \ref{cor2}. 

Actually, Habiro's proof of Theorem \ref{cnknots} can be adapted to show the following.
\begin{proposition}\label{propapp}
Suppose that the group of $C_{k+1}$-equivalence classes of $n$-string links is abelian. Then 
two $n$-string links cannot be distinguished by any finite type invariant of order $\le k$ if and only if they 
are $C_{k+1}$-equivalent. 
\end{proposition}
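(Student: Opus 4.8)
The statement splits into two implications, and, exactly as for knots, only one of them uses the hypothesis. The plan is to follow Habiro's proof of Theorem \ref{cnknots} line by line, substituting the assumed commutativity of $\mathcal{SL}(n)/C_{k+1}$ for the automatic commutativity of $\mathcal{SL}(1)/C_{k+1}$ that he exploits in the knot case. First I would dispose of the ``if'' direction, which needs no hypothesis: a single surgery along a $C_{k+1}$-tree modifies a string link by an element of $J_{k+1}(n)$, so every finite type invariant of order $\le k$ is constant on $C_{k+1}$-equivalence classes.

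For the converse, I would package the finite type information algebraically. The stacking product makes $\mathbb{Z}\mathcal{SL}(n)$ a (noncommutative) ring with $J_a(n)\cdot J_b(n)\subseteq J_{a+b}(n)$, so $R:=\mathbb{Z}\mathcal{SL}(n)/J_{k+1}(n)$ is a ring in which $J_1(n)/J_{k+1}(n)$ is nilpotent. As $L-\mathbf{1}\in J_1(n)$ for every string link $L$ (any $L$ being $C_1$-trivial), the class of $L$ is a unit of $R$, giving a monoid homomorphism $\Phi\colon\mathcal{SL}(n)\to R^{\times}$; two string links agree on all order $\le k$ invariants precisely when they have the same image under $\Phi$. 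By the easy direction, $\Phi$ factors through a group homomorphism $\bar\Phi\colon\mathcal{SL}(n)/C_{k+1}\to R^{\times}$, and the ``only if'' direction is exactly the assertion that $\bar\Phi$ is injective.

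To prove injectivity I would filter the source by the subgroups $G^{(j)}$ of classes of $C_j$-trivial string links and the target by $U^{(j)}:=1+J_j(n)/J_{k+1}(n)$. By Remark \ref{rem:commute} both filtrations are central and finite, with $G^{(k+1)}=U^{(k+1)}=1$, so a standard leading-term argument reduces injectivity of $\bar\Phi$ to injectivity of each associated-graded map $\mathrm{gr}_j\bar\Phi\colon G^{(j)}/G^{(j+1)}\to J_j(n)/J_{j+1}(n)$ for $j\le k$. Here clasper calculus enters: by the AS, IHX, STU and twist relations (Lemmas \ref{asihxstu} and \ref{lem:twist}), $G^{(j)}/G^{(j+1)}$ is generated by the classes $[\mathbf{1}_t]$ of simple $C_j$-trees and receives a surjection from the degree $j$ Jacobi diagram space $\mathcal{A}_j(n)$, whose composite with $\mathrm{gr}_j\bar\Phi$ is, up to sign, the ``chords to double points'' map $\xi_j$ (the $n$-strand analogue of the isomorphism of Section \ref{sec:proof2}), a rational isomorphism.

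The hard part will be the very last step, namely establishing that $\mathrm{gr}_j\bar\Phi$ is injective \emph{over $\mathbb{Z}$}, i.e. that $G^{(j)}/G^{(j+1)}$ carries no torsion invisible to Vassiliev invariants. This is exactly the point of Habiro's argument at which commutativity of the ambient group is used, and I would invoke the commutativity of $\mathcal{SL}(n)/C_{k+1}$ precisely here, citing the relevant identification of the graded of the $C$-filtration with the associated graded of the Vassiliev filtration in \cite{H}. Everything preceding this step---the ring structure on $R$, the monoid homomorphism $\Phi$, and the reduction to associated-graded pieces via Remark \ref{rem:commute}---is formal and hypothesis-free; the whole weight of the proposition rests on transporting Habiro's graded computation to string links under the stated commutativity assumption.
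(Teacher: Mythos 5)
Your reduction is set up correctly: the ring $R=\mathbb{Z}\mathcal{SL}(n)/J_{k+1}(n)$, the monoid homomorphism $\Phi$ into $R^{\times}$, the factorization $\bar\Phi\colon\mathcal{SL}(n)/C_{k+1}\to R^{\times}$, and the observation that the hard direction is exactly injectivity of $\bar\Phi$ are all fine, as is the easy direction and the leading-term reduction to the graded maps $\mathrm{gr}_j\bar\Phi\colon G^{(j)}/G^{(j+1)}\to J_j(n)/J_{j+1}(n)$. But the proof has a genuine gap, and it sits precisely at the step carrying all the content. The integral injectivity of $\mathrm{gr}_j\bar\Phi$ is not something you can cite from \cite{H}: for string links, the identification of the graded of the $C$-filtration with the graded of the Goussarov--Vassiliev filtration \emph{is} (an a priori stronger form of) the statement being proved, and Habiro establishes it only for knots, where it is a \emph{consequence} of his Theorem 6.18, not an ingredient of its proof. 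Your Kontsevich-integral argument via $\xi_j$ only handles the rational statement; the entire difficulty of the Goussarov--Habiro property is the possible torsion in $G^{(j)}/G^{(j+1)}$, which is exactly what you defer to the citation. As written, the argument is circular at that point. Note also that, as a symptom of this, your proof never actually uses the commutativity hypothesis in any concrete step --- it is postponed to the step that is never carried out.

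You have also misidentified where commutativity enters Habiro's argument: it is not used in a graded torsion analysis, but at the very outset, to make the tautological map a legitimate invariant. The paper's proof (following \cite[Thm. 6.18]{H}) runs as follows: since $\mathcal{SL}(n)/C_{k+1}$ is abelian, the assignment $L\mapsto [L]_{k+1}$ extends to a homomorphism of abelian groups $\varphi_k\colon\mathbb{Z}\mathcal{SL}(n)\to\mathcal{SL}(n)/C_{k+1}$; by Habiro's structure theorem for the Goussarov--Vassiliev filtration \cite[Prop. 6.10]{H} --- which he proved for string links, not just knots --- together with the argument of \cite[Prop. 6.16]{H}, $\varphi_k$ vanishes on $J_{k+1}(n)$, i.e. it is itself a finite type invariant of degree $k$. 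Then if $L-L'\in J_{k+1}(n)$, one gets $[L]_{k+1}=\varphi_k(L)=\varphi_k(L')=[L']_{k+1}$ at once; injectivity of your $\bar\Phi$ is the \emph{output} of this argument rather than something to be attacked filtration by filtration. To repair your proposal, replace the citation in your last step by this argument (commutativity is what allows one to rearrange and cancel the images under $\varphi_k$ of the alternating sums over forests of claspers that generate $J_{k+1}(n)$ by \cite[Prop. 6.10]{H}); once this is done, the graded machinery becomes unnecessary.
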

\begin{proof}
The fact that two $C_{k+1}$-equivalent string links share all finite type invariants of degree up to $k$ is well known \cite[Cor. 6.8]{H}, 
so we only need to show the converse implication.  

Supppose that the group $\mathcal{SL}(n)/C_{k+1}$ is abelian. 
Following Habiro's strategy for proving Theorem \ref{cnknots} (Theorem 6.18 in \cite{H}), we consider the homomorphism of abelian groups 
(from an additive to a multiplicative abelian group) 
 $$ \varphi_k: \mathbb{Z}\mathcal{SL}(n) \longrightarrow \mathcal{SL}(n)/C_{k+1}$$ 
which maps an $n$-string link $L$ to its $C_{k+1}$-equivalence class $[L]_{k+1}$.  
The key point is that the existence (ie the well-definedness) of this map relies on the fact that $\mathcal{SL}(n)/C_{k+1}$ is abelian. 
Notice that $\varphi_k$ is indeed additive, since for two $n$-string links $L$ and $L'$, we have 
$$ \varphi_k(L\cdot L'-L-L') 
     = [L\cdot L']_{k+1}\cdot [L]^{-1}_{k+1}\cdot [L']^{-1}_{k+1}
     = [\1]_{k+1} = \varphi_k(\1). $$
Now, we have that $\varphi_k$ is a finite type invariant of degree $k$. 
This is proved by Habiro in the (string-)knot case in \cite[Prop. 6.16]{H}. 
His argument relies on a deep result on the structure of the Goussarov-Vassiliev filtration \cite[Prop. 6.10]{H}, 
which involves advanced clasper theory. But Habiro actually established the latter result not only for (string-)knots, but also for string links (and more generally for surface string links). 
So we can freely use \cite[Prop. 6.10]{H} to show that $\varphi_k$ is a finite type invariant of degree $k$. 
The proof of Proposition \ref{propapp} is then completed by simply following \cite[Thm. 6.18]{H} as follows. 
If two $n$-string links $L$ and $L'$ cannot be distinguished by any finite type invariant of order $\le k$, 
then $\varphi_k(L)=\varphi_k(L')$. This implies that $[L]_{k+1} = [L']_{k+1}$, that is, $L$ and $L'$ are $C_{k+1}$-equivalent. 
\end{proof}

\end{document}